\makeatletter \@addtoreset{equation}{section}
\newcommand{\Supp}{\operatorname{Supp}}
\newcommand{\Spec}{\operatorname{Spec}}
\newcommand{\PP}{\mathbb{P}}
\newcommand{\FF}{\mathbb{F}}
\newcommand{\RR}{\mathbb{R}}
\newcommand{\CC}{\mathbb{C}}
\newcommand{\AAA}{\mathbb{A}}
\newcommand{\NE}{\mathbb{NE}}
\newtheorem{theorem}[equation]{Theorem}
\newtheorem{lemma}[equation]{Lemma}
\theoremstyle{definition}
\newtheorem{definition}[equation]{Definition}
\theoremstyle{remark}
\date{}
\begin{document}

\title[]{Cylinders in del Pezzo surfaces with du Val singularities}
\author[]{Grigory Belousov and Nivedita Viswanathan}
\maketitle

\begin{abstract}
We consider del Pezzo surfaces $X$ with du Val singularities. Assume that $X$ has a $-K_X$-polar cylinder and $\deg X=1$. Let $H$ be an ample divisor. We'll prove that $X$ has a $H$-polar cylinder.
\end{abstract}

\section{intoduction}

A \emph{log del Pezzo surface} is a projective algebraic surface $X$
with only quotient singularities and ample anti-canonical divisor
$-K_{X}$. In this paper we assume that $X$ has only du Val singularities and we work over complex number field $\CC$. Note that a del Pezzo surface with only du Val singularities is rational.

\begin{definition}[see. {\cite{KPZ}}]
Let $M$ be a $\RR$-divisor on a projective normal variety $X$. An \emph{$M$-polar cylinder} in $X$ is an open subset $U=X\backslash\Supp(D)$ defined by an effective $\RR$-divisor $D$ such that $D\equiv M$ and $U\cong Z\times\AAA^1$ for some affine variety $Z$.
\end{definition}

In this paper, we consider del Pezzo surfaces with du Val singularities over complex number field $\CC$. Our interest is a connection between existence of a $-K_X$-polar cylinder in the del Pezzo surface and existence of a $H$-polar cylinder, where $H$ is an arbitrary ample divisor on $X$.

The existence of a $H$-polar cylinder in $X$  is important due to the following fact.

\begin{theorem}[see {\cite{KPZ1}}, Corollary 3.2]
Let $Y$ be a normal algebraic variety over $\CC$ projective over an affine
variety $S$ with $\dim_S Y\geq 1$. Let $H\in  Div(Y)$ be an ample divisor on $Y$, and let $V =
\Spec A(Y,H)$ be the associated affine quasicone over $Y$. Then $V$ admits an effective
$G_a$-action if and only if $Y$ contains an $H$-polar cylinder.
\end{theorem}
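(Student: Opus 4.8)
The plan is to prove the two implications separately, using as a common bridge the $\mathbb{G}_{m}$-action on $V$ induced by the grading of $A := A(Y,H) = \bigoplus_{n\ge 0} H^{0}\!\left(Y,\OOO_{Y}(nH)\right)$. Here $\operatorname{Proj} A \cong Y$, and the canonical rational map $\rho\colon V \dashrightarrow Y$ is a morphism away from the vertex $\Spec(A_{0})$ — which is affine of dimension $\dim S \le \dim Y - 1 = \dim V - 2$ by the hypothesis $\dim_{S}Y \ge 1$ — where it is a Seifert $\mathbb{G}_{m}$-fibration; this orbifold twisting is what the word ``quasicone'' refers to. As preliminaries I would record that $A$ is a finitely generated graded $\Gamma(S,\OOO_{S})$-algebra (projectivity over the affine $S$ and ampleness of $H$), so that $V$ is an affine variety of dimension $\dim Y + 1$, and that any nonzero locally nilpotent derivation of $A$ may be replaced by a nonzero \emph{homogeneous} one, namely its leading graded component. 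Thus throughout I may assume the $\mathbb{G}_{a}$-action is normalized by $\mathbb{G}_{m}$, so that $\mathbb{G}_{m}\ltimes\mathbb{G}_{a}$ acts on $V$.

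\emph{Cylinder $\Rightarrow$ $\mathbb{G}_{a}$-action.} Let $U = Y\setminus\Supp(D) \cong Z\times\AAA^{1}$ be an $H$-polar cylinder, with $D \ge 0$ and $D$ equivalent to $H$ in the sense under which $A(Y,H)$ is formed; let $t \in A$ be a degree-one element with $\operatorname{div}(t) = D$. Over $U$ the sheaf $\OOO_{Y}(H)$ is trivialized by $t$, so, after a quasi-\'etale adjustment absorbing the Seifert data, $\rho^{-1}(U) \cong U\times\mathbb{G}_{m} \cong Z\times\AAA^{1}\times\mathbb{G}_{m}$, on which translation of the $\AAA^{1}$-factor is a genuine $\mathbb{G}_{a}$-action; viewed on $V$ this is a meromorphic homogeneous locally nilpotent derivation $\partial$ of $A$ with poles only along components of $D$, and $\partial(t) = 0$ since the units of $Z\times\AAA^{1}$ are pulled back from $Z$. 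Passing to $t^{N}\partial$ with $N\gg 0$ then clears the poles along $\Supp(D) = \Supp(\operatorname{div}(t))$ while preserving homogeneity and, because $\partial(t)=0$ gives $(t^{N}\partial)^{k} = t^{Nk}\partial^{k}$, local nilpotency. The result is a nonzero homogeneous LND of $A$, i.e. an effective $\mathbb{G}_{a}$-action on $V$.

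\emph{$\mathbb{G}_{a}$-action $\Rightarrow$ cylinder.} Given an effective, hence (after homogenization) $\mathbb{G}_{m}$-normalized, $\mathbb{G}_{a}$-action, its locally nilpotent derivation admits a local slice: there is a dense open $\mathbb{G}_{m}$-stable $W \subseteq V\setminus\{\text{vertex}\}$ with $W\cong B\times\AAA^{1}$, the $\AAA^{1}$-lines being the $\mathbb{G}_{a}$-orbits. Since $\mathbb{G}_{m}$ normalizes $\mathbb{G}_{a}$ it permutes these orbits, hence respects the projection $W\to B$; taking the $\mathbb{G}_{m}$-quotient (finite stabilizers) yields $U := \rho(W)\to Z := B/\mathbb{G}_{m}$, an $\AAA^{1}$-bundle — a torsor under a line bundle — over the affine variety $Z$, hence trivial, so $U \cong Z\times\AAA^{1}$ is an open subset of $Y$. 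Finally, $V\setminus W$ minus the vertex is a $\mathbb{G}_{m}$-stable divisor of a single weight, so it is cut out by a semi-invariant section of some $\OOO_{Y}(mH)$; the associated divisor on $Y$, rescaled by $1/m$, is an effective $D$ with $\Supp(D) = Y\setminus U$ equivalent to $H$, exhibiting $U$ as an $H$-polar cylinder.

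The step I expect to be the main obstacle is everything packed into ``quasicone'' and ``quasi-\'etale adjustment'': because $H$ is only $\QQ$-Cartier and $Y$ may be singular, $\rho$ is a Seifert rather than a principal $\mathbb{G}_{m}$-fibration, so the trivialization $\rho^{-1}(U)\cong U\times\mathbb{G}_{m}$, the construction of $\partial$, and the extraction of the cylinder each have to be done on an orbifold cover and then shown to descend — in particular one must check that the homogeneous LND built upstairs really descends to $A$ itself. The hypotheses ``$Y$ projective over affine $S$'' and ``$\dim_{S}Y\ge 1$'' enter precisely to secure finite generation of $A$ and codimension $\ge 2$ of the vertex, which is what permits divisors and line bundles to be transported freely between $V\setminus\{\text{vertex}\}$ and $Y$; I would isolate those two facts first and then run both constructions on the relevant big open subsets.
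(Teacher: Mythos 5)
This statement is not proved in the paper at all: it is imported verbatim from \cite{KPZ1} (Corollary 3.2) and used as a black box, so the only sensible comparison is with the argument in that reference. Your outline follows the same strategy as the cited source (normalize the $\mathbb{G}_a$-action by the $\mathbb{G}_m$-action coming from the grading of $A=A(Y,H)$, i.e. pass to a homogeneous locally nilpotent derivation, and translate between such derivations and cylinders), so the architecture is right; but as written there are genuine gaps exactly at the points you yourself flag. In the forward direction, the identification $\rho^{-1}(U)\cong U\times\mathbb{G}_m$ is in general false: the polar divisor $D$ is only a $\QQ$-divisor (in the present paper even an $\RR$-divisor with $D\equiv H$, in which case a degree-one element $t$ with $\operatorname{div}(t)=D$ need not exist at all), so only some multiple $mD$ is the divisor of a section $t\in H^0(Y,\OOO_Y(mH))$, and $t$ trivializes only the $m$-th Veronese part; $\rho^{-1}(U)\to U$ is then merely a Seifert $\mathbb{G}_m$-fibration. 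Your ``quasi-\'etale adjustment \ldots then shown to descend'' is precisely the missing content: the derivation has to be built on the graded ring $A$ itself (as in \cite{KPZ1}, working with the localization $A_t$ and its degree-zero part $\OOO(U)$), and one must check $t^N\partial(a_i)\in A$ on generators, using normality of $V$ to ignore the codimension-two locus over the vertex --- which is where the hypothesis $\dim_S Y\geq 1$ enters.

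In the converse direction the gap is that you never actually exhibit $Y\setminus U$ as the support of an effective divisor equivalent to $H$. For that, the slice open set must be taken of the form $D(f)$ for a homogeneous $f\in\ker\partial$ of \emph{positive} degree; then $Y\setminus U=\Supp(\operatorname{div}_Y f)$ and $\tfrac{1}{\deg f}\operatorname{div}_Y(f)$ is the required polar divisor. For an arbitrary open $W$ with $W\cong B\times\AAA^1$ the complement $V\setminus W$ need not be a divisor, and a $\mathbb{G}_m$-stable divisor need not be principal, so the sentence ``cut out by a semi-invariant section of some $\OOO_Y(mH)$'' is unjustified. Moreover, in the relative setting $\ker\partial\supseteq A_0=\Gamma(S,\OOO_S)$, and a kernel element of degree zero yields a complement supported on fibres over a divisor in $S$, which is not proportional to $H$; one must argue (e.g. via the transcendence degree of $\ker\partial$, again using $\dim_S Y\geq 1$) that a positive-degree homogeneous kernel element over which a slice exists can be chosen. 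Finally, the claim that $U\to Z$ is a torsor under a line bundle, hence trivial over affine $Z$, again hides the finite-stabilizer (Seifert) issue; the clean route is algebraic: localize at such an $f$, replace $\partial$ by $f^k\partial$ of degree zero, and read off the cylinder structure of $U=Y_f$ from a slice of the induced locally nilpotent derivation of $(A_f)_0=\OOO(U)$. So the proposal is a reasonable sketch of the known proof, but it is not yet a proof.
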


When $H=-K_{S_d}$, these have been studied in {\cite{Ch1}}, {\cite{Ch2}}.

\begin{theorem}\cite[Theorem 1.5]{Ch1}
\label{theorem:cylinders:-K_S} Let $X_d$ be a del Pezzo surface of degree $d$ with at most
du Val singularities.
\begin{itemize}
\item[I.] The surface $X_d$ does not admit a $(-K_{X_d})$-polar cylinder when
\begin{enumerate}
\item $d=1$ and  the surface $X_d$ allows only singular points of types $\mathrm{A}_1$, $\mathrm{A}_2$, $\mathrm{A}_3$,
$\mathrm{D}_4$ if any;
\item $d=2$ and $S_d$ allows only singular points of type $\mathrm{A}_1$ if any;
\item $d=3$ and $S_d$ allows no singular point.%
\end{enumerate}
\item[II.] The surface $X_d$ has a $(-K_{X_d})$-polar cylinder if it is not one of the del Pezzo surfaces listed in I.
\end{itemize}
\end{theorem}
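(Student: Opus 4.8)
The plan is to treat the two halves of Theorem~\ref{theorem:cylinders:-K_S} separately, since they are of opposite character: Part~II (existence) is constructive, while Part~I (non-existence) needs an obstruction, and I expect Part~I to be the genuine difficulty. The common engine is the standard translation (in the spirit of Kishimoto--Prokhorov--Zaidenberg) of the analytic datum of a cylinder into the geometry of a pencil of rational curves. Concretely, if $U=X\backslash\Supp(D)$ with $D\equiv -K_X$ and $U\cong Z\times\AAA^1$, then the second projection is an $\AAA^1$-fibration $U\to Z$ over a smooth affine curve $Z$; compactifying $Z$ and resolving indeterminacy produces a birational morphism $f\colon \tilde X\to X$ from a smooth surface together with a fibration $\pi\colon \tilde X\to\PP^1$ whose general fibre $F$ is rational, base-point-free, and satisfies $F^2=0$, the strict transform of $U$ being an open subset of a product. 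Pulling back $D$, the boundary splits into a part horizontal to $\pi$ and a part contained in fibres, and the relation $D\equiv -K_X$, combined with adjunction on $\tilde X$ (so $p_a(F)=0$, i.e. $K_{\tilde X}\cdot F=-2$) and with the intersection behaviour of the $f$-exceptional curves, becomes a tight numerical constraint on how $F$ meets the finitely many lines and conics of $X$.

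For the non-existence part I would first record the classical list of du Val del Pezzo surfaces of degrees $1,2,3$ organized by the Dynkin type of their singularities, together with the dual graph of $(-1)$-curves on the minimal resolution. The principal tool is then the global log canonical threshold $\alpha(X)$: a $(-K_X)$-polar cylinder forces an effective anticanonical $\QQ$-divisor that is not log canonical, so whenever $\alpha(X)\geq 1$ no cylinder can exist. I would compute $\alpha(X)$ for the degree-$1$ surfaces whose singularities are only of types $\mathrm{A}_1,\mathrm{A}_2,\mathrm{A}_3,\mathrm{D}_4$ and show $\alpha(X)\geq 1$ (in fact $=1$), which I expect to dispose of case~I.1 in one stroke. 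The borderline cases are exactly the ones the crude $\alpha$-bound misses: the smooth cubic (case~I.3, where $\alpha(X)=\tfrac23$) and the degree-$2$ surfaces with at most one $\mathrm{A}_1$ point (case~I.2, where $\alpha(X)<1$). Here I would run the finer argument: the fibration $\pi$ above must be a fibering pencil with $F^2=0$ and $p_a(F)=0$, and intersecting $F$ against the $f$-exceptional curves and the finitely many lines/conics of $X$ leaves only finitely many candidate pencils. Each candidate is then ruled out by a direct check on the configuration of $(-1)$-curves, showing that the complement of any anticanonical boundary supported on them cannot be a product $Z\times\AAA^1$.

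For the existence part (Part~II) I would exhibit a cylinder directly for each surface outside the list. The typical mechanism is that a sufficiently degenerate du Val point (type $\mathrm{A}_{\geq 4}$, $\mathrm{D}_{\geq 5}$, or $\mathrm{E}_{6,7,8}$ in degree $1$, and the analogous thresholds in degrees $2,3$) forces the minimal resolution to carry a long chain or tree of rational curves, and among those curves together with the fibres of a suitable conic bundle one finds a reduced divisor $\Gamma$ with $X\backslash\Supp(\Gamma)\cong Z\times\AAA^1$. I would then adjust $\Gamma$ by adding a general fibre and a suitable multiple of a $(-1)$-curve already appearing in it, arranging the supporting effective divisor to be $\QQ$-linearly equivalent to $-K_X$ \emph{without enlarging the support}; this yields a genuine $(-K_X)$-polar cylinder. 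Since the collection of surfaces is finite once organized by Dynkin type, this reduces to a finite, if laborious, sequence of explicit constructions.

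The main obstacle I anticipate is pinning down the sharp boundary of the list, i.e. the non-existence statements in degree $1$ and for the smooth cubic. In degree $1$ the anticanonical pencil has a single base point, and the elliptic fibration obtained after blowing it up interacts delicately with the hypothetical $\AAA^1$-fibration, so the numerical analysis above must be supplemented by a careful study of that elliptic pencil and of precisely where $\alpha(X)$ drops below $1$. Establishing the exact singularity threshold --- $\mathrm{A}_3$ versus $\mathrm{A}_4$, and $\mathrm{D}_4$ versus $\mathrm{D}_5$ --- is where the estimates are sharp, and I expect that case analysis to absorb the bulk of the work.
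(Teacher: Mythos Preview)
This theorem is not proved in the paper at all: it is quoted verbatim from \cite[Theorem~1.5]{Ch1} as background, and the paper offers no argument for it. The paper's own work is Theorem~\ref{glav}, whose proof (the sequence of Lemmas~\ref{Lem1}--\ref{LemA6} in Section~3) takes the cited result as a black box and addresses a different question, namely the existence of $H$-polar cylinders for \emph{arbitrary} ample $H$ on degree-$1$ surfaces already known to admit a $(-K_X)$-polar cylinder. There is therefore nothing in the paper to compare your proposal against.

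For context only: your outline is broadly the strategy of \cite{Ch1} itself --- explicit constructions for Part~II, and for Part~I an obstruction via singularities of effective anticanonical $\QQ$-divisors. One caution: the clean statement ``$\alpha(X)\geq 1$ rules out a cylinder'' is not quite what is used there. The actual obstruction is that a cylinder produces an effective $D\equiv -K_X$ together with a point $P$ at which $(X,D)$ is not log canonical \emph{and} $P$ lies outside the support of any other effective $D'\equiv -K_X$ with $\Supp(D')\subset\Supp(D)$; ruling this out requires a finer analysis than the global $\alpha$-invariant, and this is exactly what handles the borderline cases you flagged (smooth cubic, degree~$2$ with $\mathrm{A}_1$, and the degree-$1$ surfaces at the $\mathrm{A}_3/\mathrm{A}_4$ and $\mathrm{D}_4/\mathrm{D}_5$ thresholds).
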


The main result of this paper is the followings.

\begin{theorem}
\label{glav}
Let $X$ be a del Pezzo surface with du Val singularities. Assume that $X$ has a $-K_X$-polar cylinder. Let $H$ be an ample divisor. Then $X$ has a $H$-polar cylinder.
\end{theorem}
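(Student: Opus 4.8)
\medskip
\noindent\emph{Proof strategy.} The plan is to reduce Theorem~\ref{glav} to a numerical statement about the effective cone of $X$, and then to settle that statement case-by-case using Theorem~\ref{theorem:cylinders:-K_S} and the short list of surfaces it leaves open. I will concentrate on $\deg X=1$, which is the essential case; in higher degree cylinders are far more plentiful and the same method applies more easily.

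First I would fix a $-K_X$-polar cylinder $U=X\setminus\Supp(D)$, write $D=\sum_{i=1}^{m}a_iC_i$ with $a_i>0$ and $D\equiv -K_X$, and $U\cong Z\times\AAA^1$. Since $X$ is rational, $Z$ is a rational affine curve, so $Z\cong\PP^1$ with at least one point removed, whence $\mathrm{Cl}(Z)=0$ and $\mathrm{Cl}(U)\cong\mathrm{Cl}(Z)=0$. The excision sequence $\bigoplus_i\ZZ[C_i]\to\mathrm{Cl}(X)\to\mathrm{Cl}(U)\to0$ then shows that $[C_1],\dots,[C_m]$ generate $\mathrm{Cl}(X)$, hence span $\mathrm{Cl}(X)\otimes\RR$. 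The projection $U\to Z$ extends to a dominant rational map $X\dashrightarrow\PP^1$; let $\Phi$ be the class of the closure of a general fibre (and, when $U\cong\AAA^2$, I would also keep available the fibre classes of the other $\AAA^1$-fibrations of $U$). The key reduction is this: \emph{it suffices to show that for every ample $\QQ$-divisor $H$ there is some $-K_X$-polar cylinder as above with $H\in\mathrm{Cone}\big([C_1],\dots,[C_m],\Phi\big)$.} Indeed, granting this, one replaces $H$ by the still-ample class $H+\varepsilon K_X$ with $0<\varepsilon\ll1$, writes $H+\varepsilon K_X\equiv\sum_i\lambda_i[C_i]+\mu\Phi$ with $\lambda_i,\mu\ge0$, and sets
\[
D_H=\sum_i(\lambda_i+\varepsilon a_i)\,C_i+\tfrac{\mu}{k}\sum_{j=1}^{k}\overline F_{z_j},
\]
where $\overline F_{z_1},\dots,\overline F_{z_k}$ are closures of fibres over general points of $Z$ (omit the second sum if $\mu=0$). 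Then $D_H\equiv H$ is effective, $\Supp(D_H)=\bigcup_iC_i\cup\bigcup_j\overline F_{z_j}$ since every coefficient is positive, and $X\setminus\Supp(D_H)=(Z\setminus\{z_1,\dots,z_k\})\times\AAA^1$ is an $H$-polar cylinder. Passing from ample $\QQ$-divisors to arbitrary ample $\RR$-divisors is harmless: each cone $\mathrm{Cone}([C_i],\Phi)$ is closed, only finitely many cylinders will be used, and the rational ample classes are dense in the (open) ample cone.

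To prove the reduced statement I would first invoke Theorem~\ref{theorem:cylinders:-K_S}: for $\deg X=1$ the existence of a $-K_X$-polar cylinder forces $X$ to carry a du Val point not of type $\mathrm{A}_1,\mathrm{A}_2,\mathrm{A}_3,\mathrm{D}_4$. Enumerating the root subsystems of $\mathrm{E}_8$ then leaves only finitely many surfaces, and a deep singularity keeps $\rho(X)=9-\#\{(-2)\text{-curves on the minimal resolution}\}$ small. When $\rho(X)=1$ (e.g.\ the surfaces with an $\mathrm{E}_8$, $\mathrm{D}_8$, $\mathrm{A}_8$ or $\mathrm{E}_7{+}\mathrm{A}_1$ point) the effective cone is a single ray, already generated by $\Phi$, so there is nothing to do. For the remaining surfaces ($\rho(X)\ge2$) the plan is: describe $\mathrm{Eff}(X)$ and the ample cone explicitly through the $(-1)$- and $(-2)$-curves and $K_X$; take for $U$ one of the explicit $-K_X$-polar cylinders underlying the proof of Theorem~\ref{theorem:cylinders:-K_S} (these arise from pencils of conics or of lines through the singular locus, with a suitable chain contracted); read off $C_1,\dots,C_m$ and $\Phi$; and verify that $\mathrm{Cone}([C_i],\Phi)$ contains the ample cone --- and, if a single cylinder does not suffice, combine several of them (from different fibrations, from automorphisms of $X$, or, when $U\cong\AAA^2$, from the second ruling and from $(-1)$-curves meeting $\Supp(D)$ in a single point, which by Abhyankar--Moh remain cylinder-compatible) so that their cones jointly cover every ample class.

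The hard part will be exactly this last verification. Linear spanning of $\mathrm{Cl}(X)\otimes\RR$ by the boundary curves comes for free, as above, but \emph{positive} spanning --- that the boundary curves together with the available fibre classes generate a cone containing the whole ample cone --- does not, and for the surfaces of larger Picard number (those with, say, an $\mathrm{A}_4$ or $\mathrm{D}_5$ point and little else) it requires real care: either a clever choice of cylinder, or the combination of several cylinders, or an induction on $\rho(X)$ in which one contracts a $(-1)$-curve lying in $\Supp(D)$ --- lowering $\rho$ and preserving the cylinder --- disposes of the classes pulled back from the contracted surface, and then handles the single remaining direction by a cylinder tailored to the exceptional curve. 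Organising this into a uniform treatment of the finitely many surfaces is where the substance of the proof lies.
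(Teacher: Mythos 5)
Your reduction is sound as far as it goes: the excision argument, the recipe $D_H=\sum_i(\lambda_i+\varepsilon a_i)C_i+\frac{\mu}{k}\sum_j\overline F_{z_j}$, and the passage from rational to real ample classes are all fine, and the observation that one may use several cylinders and that only the case $\deg X=1$, $\rho(X)\ge 2$ matters agrees with the set-up of the paper. But what you have written is a reduction plus a plan, not a proof. The entire content of the theorem is the step you explicitly defer: verifying, for each of the finitely many degree-one du Val del Pezzo surfaces left open by Theorem~\ref{theorem:cylinders:-K_S} (those with a singular point of type $\mathrm{A}_4,\dots,\mathrm{A}_7$, $\mathrm{D}_5,\dots,\mathrm{D}_7$, $\mathrm{E}_6$, $\mathrm{E}_7$, possibly together with extra $\mathrm{A}_k$ points), that every ample class is covered by the cones attached to suitable cylinders. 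This is exactly what the paper does, and it occupies Lemmas \ref{Lem1}--\ref{LemA6}: for each configuration one writes $H\equiv-K_X+\sum a_iE_i$ (or $+aC$) via the Fujita face, passes to the minimal resolution, and exhibits an explicit effective $L\equiv\varphi^*(H)$ supported on the section and fibres of an explicit $\PP^1$-fibration (often after further blow-ups, and with different fibrations in different ranges of the coefficients $a_i$, e.g.\ $a_1+a_2\le\frac12$ versus $a_1+a_2>\frac12$ in the $\mathrm{E}_6$ case, or $\lambda\le\frac12$ versus $\lambda>\frac12$ for $\mathrm{A}_7$). Nothing in your sketch establishes that such decompositions exist; saying that the positive spanning ``requires real care'' concedes precisely the point at issue.

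Two further cautions about the ingredients you propose for that missing step. First, your suggested induction ``contract a $(-1)$-curve in $\Supp(D)$, lowering $\rho$ and preserving the cylinder'' is not available in the form stated: a birational extremal contraction of the singular surface $X$ raises the degree and can change the singularity type, so it does not stay within the degree-one classification you are inducting over, and one must check in each case that the image of the cylinder is still a cylinder with boundary of the required class --- this is not automatic and is not how the paper argues (the paper instead works on the resolution $\tilde X$ and blows \emph{up} intersection points of $(-2)$-curves to create the needed fibration). Second, the appeal to Abhyankar--Moh for adjoining $(-1)$-curves meeting the boundary in one point is only a heuristic; it does not by itself produce the required numerical decomposition of a given ample $H$. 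So the proposal identifies a correct framework but leaves the substance of the theorem --- the case-by-case construction of the effective divisors and fibrations --- unproved.
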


In {\cite{Saw1}, {\cite{Saw2} proved this theorem for cases, when $\deg X\geq 2$. So, we may assume that $\deg X=1$.

The authors is grateful to professor I. A. Cheltsov for suggesting
me this problem and for his help.

\section{Preliminary results}

We work over complex number field $\CC$.
We employ the following notation:
\begin{itemize}
\item
$(-n)$-curve is a smooth rational curve with self intersection
number $-n$.
\item
$K_{X}$: the canonical divisor on $X$.
\item
$\rho(X)$: the Picard number of $X$.
\end{itemize}

Let $X$ be a del Pezzo surface with du Val singularities and let $H$ an arbitrary ample divisor on $X$. Assume that $\rho(X)=1$. Then $H\equiv -K_X$. So, $X$ has an $H$-polar cylinder if and only if $X$ has a $-K_X$-polar cylinder. So we may assume that $\rho(X)>1$.

\begin{definition}
Put $$\mu_H:=\inf\left\{\lambda\in\RR_{>0}\mid\text{ the $\RR$-divisor $K_X+\lambda H$ is pseudo-effective}\right\}.$$ The number $\mu_H$ is called Fujita invariant of $(X,H)$. The smallest extremal face $\Delta_H$ of the Mori cone $\overline{\NE(X)}$ that contains $K_X+\mu_H H$ is called the Fujita face of $H$. The Fujita rank of $H$ is defined by $r_H:=\dim\Delta_H$. Note that $r_H=0$ if and only if $-K_X\equiv \mu_H H$.
\end{definition}

Let $\phi_H X\rightarrow Y$ be the contraction given by the $\Delta_H$. Then either $\phi_H$ is a birational morphism or a conic bundle with $Y\cong\PP^1$.
In the former case, the $\RR$-divisor $H$ is said to be of type $B(r_H)$ and in the latter case it is said
to be of type $C(r_H)$.

Suppose that $H$ is of type $B(r_H)$. Then $$K_X+\mu_H H\equiv\sum\limits_{i=1}^{r_H} a_i E_i,$$ where $E_1,\ldots,E_{r_H}$ are curves contained in $\Delta_H$ and $a_1,\ldots,a_{r_H}$ are positive real numbers such that $a_i<1$ for every $i$.

Suppose that $H$ is of type $C(r_H)$. Note that $r_H=9-d$. There are a $0$-curve $B$ and $(8-d)$ disjoint curves $E_1,\ldots,E_{8-d}$, each of which is contained in a distinct fiber of $\phi_H$, such that $$K_X+\mu_H H\equiv aB+\sum\limits_{i=1}^{8-d} a_i E_i,$$
for some positive real number $a$ and non-negative real numbers $a_1,\ldots,a_{8-d}<1$.

\section{The proof of theorem \ref{glav}}

\begin{lemma}
\label{Lem1}
Let $X$ be a del Pezzo surface with du Val singularities and let $H$ be an ample divisor on $X$. Assume that $X$ has a singular point of type $E$. Then $X$ has an $H$-polar cylinder.
\end{lemma}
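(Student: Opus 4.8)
The definition of Fujita face above is set up for the following reduction, which I would take as the backbone. Since $\deg X=1$ and $r_H\le\rho(X)-1$, whereas a divisor of type $C$ would have $r_H=9-\deg X=8$, the divisor $H$ is necessarily of type $B(r_H)$; write $-K_X\equiv\mu_H H+\sum_{i=1}^{r_H}a_iE_i$ with $0<a_i<1$, the curves $E_1,\dots,E_{r_H}$ spanning the extremal face $\Delta_H$. Now suppose $U=X\setminus\Supp(D)$ is a $(-K_X)$-polar cylinder for an effective $\RR$-divisor $D\equiv -K_X$ in which every $E_i$ occurs with coefficient at least $1$. Then $D-\sum_i a_iE_i$ is effective with the same support as $D$ (the coefficient of each $E_i$ decreases by $a_i<1$ while staying positive, the other coefficients being unchanged), and $D-\sum_i a_iE_i\equiv\mu_H H$; hence $\tfrac1{\mu_H}\bigl(D-\sum_i a_iE_i\bigr)$ is an effective $\RR$-divisor $\equiv H$ supported on $\Supp(D)$, so the same open set $U$ is an $H$-polar cylinder. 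Thus it suffices to produce, for each extremal face of $\overline{\NE(X)}$ that is contracted by a birational morphism, a $(-K_X)$-polar cylinder whose complement contains the (finitely many) curves spanning that face.

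Next, the easy reductions. If $\rho(X)=1$, then $H\equiv\lambda(-K_X)$ with $\lambda\in\QQ_{>0}$; a degree-one del Pezzo surface with du Val singularities having a point of type $E_6$, $E_7$ or $E_8$ is not among those in part I of Theorem~\ref{theorem:cylinders:-K_S}, so it has a $(-K_X)$-polar cylinder $X\setminus\Supp(D)$, and then $X\setminus\Supp(\lambda D)$ is an $H$-polar cylinder. Similarly, if $r_H=0$ then $H\equiv\mu_H^{-1}(-K_X)$ and the same argument works. By the classification of du Val del Pezzo surfaces of degree one this already covers the surfaces whose singularities are of type $E_8$, $E_7+\mathrm{A}_1$ or $E_6+\mathrm{A}_2$ (all of Picard rank $1$), and there remain only the surfaces of type $E_7$ and $E_6+\mathrm{A}_1$ (of Picard rank $2$) and of type $E_6$ (of Picard rank $3$).

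For these three surfaces the Mori cone is polyhedral with only a few extremal rays, each spanned by a known negative curve, so one has to go through the finitely many extremal faces and, for each, exhibit a $(-K_X)$-polar cylinder adapted to it. Concretely, given a face spanned by curves $E_{i_1},\dots,E_{i_k}$ I would look for a reduced anticanonical divisor $D=E_{i_1}+\cdots+E_{i_k}+R$ with $R\ge 0$, $R\equiv -K_X-\sum_j E_{i_j}$, such that $X\setminus\Supp(D)$ is a product $Z\times\AAA^1$ — typically by choosing $\Supp(R)$ to be a chain of curves through the du Val point so that the complement fibres in affine lines over an affine curve; the $(-K_X)$-polar cylinders constructed in \cite{Ch1} on these surfaces are the natural starting point, and one may have to replace $D$ by a member of $|-mK_X|$ with a prescribed component structure if no reduced anticanonical divisor suffices. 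This step — finding, case by case, an anticanonical cylinder whose complement swallows each birational extremal face of $\overline{\NE(X)}$ — is the only substantial part and the one I expect to be the main obstacle; granted it, the reduction of the first paragraph immediately yields an $H$-polar cylinder for every ample $H$.
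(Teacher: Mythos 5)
Your outline follows the paper's route (reduce to type $B(r_H)$, use the Fujita decomposition, dismiss $\rho(X)=1$, then treat the three remaining degree-one surfaces with singularities $E_7$, $E_6+\mathrm{A}_1$ and $E_6$), but the key reduction is stated with a sign error. The decomposition is $K_X+\mu_H H\equiv\sum_i a_iE_i$, i.e.\ $\mu_H H\equiv -K_X+\sum_i a_iE_i$, not $-K_X\equiv\mu_H H+\sum_i a_iE_i$. Starting from an effective $D\equiv -K_X$ one must therefore \emph{add} $\sum_i a_iE_i$, and the only requirement is $E_i\subseteq\Supp(D)$; your divisor $D-\sum_i a_iE_i$ is numerically $\mu_H H-2\sum_i a_iE_i$ and does not represent $\mu_H H$, so the reduction as written is false (though easily repaired). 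Relatedly, your exclusion of type $C$ via ``$r_H=9-d=8$'' is not a proof: for singular $X$ a conic-bundle face has dimension $\rho(X)-1$, and the paper itself works with type $C$ faces of small dimension in the $D_m$ lemmas; what is actually true here is that these particular surfaces admit no conic bundle structure, which needs its own (short) argument.

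The more serious problem is that the proposal stops where the proof begins. After the (corrected) reduction, what remains is to exhibit, for each birational extremal face of $\overline{\NE(X)}$ on the $E_7$, $E_6+\mathrm{A}_1$ and $E_6$ surfaces, a cylinder whose boundary contains the curves spanning that face — and you explicitly defer this step as ``the main obstacle.'' That step is essentially the entire content of the paper's proof: it writes $H\equiv -K_X+aE$ (resp.\ $-K_X+a_1E_1+a_2E_2$), passes to the minimal resolution, writes down an explicit effective $L\equiv\varphi^*(H)$ supported on a concrete configuration of $(-1)$- and $(-2)$-curves, and identifies $\tilde X\setminus\Supp(L)$ with $\FF_2\setminus(M+F_1+\cdots)$ via an explicit $\PP^1$-fibration. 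Moreover, in the $E_6$, $\rho(X)=3$ case the paper must use two different configurations according to whether $a_1+a_2\le\tfrac12$ or $a_1+a_2>\tfrac12$, so it is not evident that a single anticanonical cylinder per face, independent of the coefficients $a_i$, even exists; your plan would at minimum have to verify this before the reduction applies. As it stands, the proposal is a plausible strategy statement rather than a proof of the lemma.
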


\begin{proof}
Let $\varphi\colon\tilde{X}\rightarrow X$ be the minimal resolution of singularities of $X$ and let $D=\sum_{i=1}^n D_i$ be the exceptional divisor of $\varphi$.

Note that $H$ is of type $B(r_H)$. If $X$ has a singular point of type $E_8$ then $\rho(X)=1$. Assume that $X$ has a singular point $P$ of type $E_7$ and $\rho(X)=2$. Then $P$ is a unique singular point and $$H\equiv-K_X+aE,$$ where $E$ is a $(-1)$-curve and $0<a<1$.  So, we have the following configuration on $\tilde{X}$.

\begin{tikzpicture}
\draw  (1,0.5) -- (5,0.5); \draw [dashed] (0,3) -- (2,0); \draw [dashed] (0,2) -- (2,6); \draw (3,0) -- (5,3); \draw  (4.5,1) -- (4.5,5); \draw (3,6) -- (5,4);
\draw  (4,3.5) -- (6,3.5) (5.5, 5)-- (5.5,3); \draw (5,4.5)-- (8,4.5); \draw [dashed] (7.5,2) -- (7.5,5); \draw (1.7,5) node [right]  {$E$}  (1.7,1.5) node [left] {$E_1$}
(7.5,2) node [right] {$E_2$}  (2.2,0.5) node [above] {$D_7$} (3.4,0.9) node [above] {$D_6$} (4.5,1) node [right] {$D_4$} (3.5,5.2) node [below] {$D_5$} (6,3.5) node [right] {$D_3$} (5.5,5) node [above] {$D_2$} (8,4.5) node [right] {$D_1$};
\end{tikzpicture}\\
where $E, E_1, E_2$ are $(-1)$-curves and $D_1,\ldots, D_7$ are $(-2)$-curves. Note that \begin{gather*}
\varphi^*(H)\equiv L= 2D_7+(3+\epsilon)D_6+(2+\epsilon)D_5+(4+2\epsilon)D_4+(3+2\epsilon)D_3+\\
+(2+2\epsilon)D_2+(1+2\epsilon)D_1+2\epsilon E_2+(1-\epsilon)E_1+(a-\epsilon)E.
\end{gather*}
We see that for sufficiently small $\epsilon$, $L$ is effective. Moreover, there exists $\PP^1$-fibration $g\colon\tilde{X}\rightarrow\PP^1$ with two singular fibers $$C_1=E+E_1,\quad C_2=2E_2+2D_1+2D_2+2D_3+2D_4+D_5+D_6.$$ Then $U\cong\tilde{X}\setminus\Supp(L)\cong\FF_2\setminus(M+F_1+F_2)$, where $M$ is a unique $(-2)$-curve and $F_1, F_2$ are fibers. So, $X$ has an $H$-polar cylinder.

Assume that $X$ has a singular point $P$ of type $E_6$ and $\rho(X)=2$. Since $\deg(X)=1)$, we see that $X$ has one more singular point $P'$ of type $A_1$. In
the same way, $$H\equiv-K_X+aE,$$ where the proper transform of $E$ on $\tilde{X}$ is a $(-1)$-curve $\hat{E}$ and $0<a<2$. Moreover, by classification of del Pezzo surface with du Val singularities and Picard number is equal one, we see that $E$ contain the singular point $P'$. We have $\varphi^*(E)=\hat{E}+\frac{1}{2}D_1$, where $D_1$ is a unique $(-2)$-curve that correspond to $P'$. So, we have the following configuration on $\tilde{X}$.

\begin{tikzpicture}
\draw  (1,0.5) -- (5,0.5); \draw [dashed] (0,2) -- (2,0); \draw  (0.5,1) -- (0.5,5); \draw [dashed] (0,3) -- (2,6); \draw (3,0) -- (5,3); \draw  (4.5,1) -- (4.5,5); \draw (3,6) -- (5,4);
\draw  (4,3.5) -- (6,3.5) (5.5, 5)-- (5.5,3); \draw [dashed] (5,4.5)-- (7,4.5) (1,5.5) -- (4,5.5); \draw (1.7,5) node [below left]  {$\hat{E}$} (1.2,3) node [left] {$D_1$} (1.7,1.5) node [left] {$E_1$}
(2.5,5.5) node [below] {$E_2$} (7,4.5) node [right] {$E_3$} (2,0.5) node [above] {$D_2$} (3.4,0.9) node [above] {$D_3$} (4.5,1) node [right] {$D_5$} (3.5,5.2) node [below] {$D_4$} (6,3.5) node [right] {$D_6$} (5.5,5) node [above] {$D_7$};
\end{tikzpicture}\\
where $\hat{E}, E_1, E_2, E_3$ are $(-1)$-curves and $D_1,\ldots, D_7$ are $(-2)$-curves. Note that $$\hat{E}+D_1+E_1\sim 2E_3+2D_7+2D_6+2D_5+D_4+D_3,$$ and there exists $\PP^1$-fibration $g\colon\tilde{X}\rightarrow\PP^1$ with two singular fibers $$C_1=2E_1+D_1+D_2,\quad C_2=E_2+D_4+D_5+D_6+D_7+E_3.$$ Then \begin{gather*}\varphi^*(H)\equiv L=(2+a)D_3+(3+\epsilon)D_5+(2+\epsilon)(D_4+D_6)+\\
+(1+\epsilon)(D_7+E_2)+\epsilon E_3+(3a-2\epsilon)E_1+(\frac{3}{2}a-\epsilon)D_1+(1+\frac{a}{2})D_2.
\end{gather*}
We see that for sufficiently small $\epsilon$, $L$ is effective. Then $$U\cong\tilde{X}\setminus\Supp(L)\cong\FF_2\setminus(M+F_1+F_2),$$ where $M$ is a unique $(-2)$-curve and $F_1, F_2$ are fibers. So, $X$ has an $H$-polar cylinder.

Assume that $X$ has a singular point $P$ of type $E_6$ and $\rho(X)=3$. Then $P$ is a unique singular point. So, $H\equiv-K_X+a_1E_1+a_2 E_2$. We may assume that $a_1\leq a_2$ (maybe $a_1=0$). Put $\hat{E}_1, \hat{E}_2$ are the proper transform of $E_1$ and $E_2$. We have the following configuration on $\tilde{X}$.\\
\begin{tikzpicture}
\draw  (0.5,0.5) -- (4.5,0.5); \draw [dashed] (1,0) -- (0,2) (0,1.6) -- (0.2,3) (2,0) -- (1,2) (1,1.6) -- (1.2,3) (5.5,1.5)--(8,1.5) (0,2.2) -- (4,4); \draw  (4,0) -- (5,2) (4.9,1.5) -- (4.9,3.5) (2.5,4) -- (5,3) (4.5,2.5)-- (7,2.5) (6,3) -- (6,1);
\draw (0.2,3) node [above] {$\hat{E}_1$} (1.2,3) node [above] {$\hat{E}_2$}(0.4,1) node [left] {$E'_1$} (1.4,1) node [right] {$E'_2$} (1.6,2.7) node [right] {$E_4$} (4.8,3) node [left] {$D_1$} (4.9,3.5) node [above] {$D_4$}
(3,0.5) node [above] {$D_2$} (4.3,0.9) node [above] {$D_3$} (7,2.5) node [right] {$D_5$} (6,1) node [below] {$D_6$} (7,1.5) node [below] {$E_3$};
\end{tikzpicture}\\
where $\hat{E}_1, E'_1,\hat{E}_2, E'_2, E_3, E_4$ are $(-1)$-curves and $D_1,\ldots, D_6$ are $(-2)$-curves. Note that $$\hat{E}_i+E'_i\sim 2E_3+2D_6+2D_5+2D_4+D_3+D_1\quad(i=1,2).$$ So, $$\hat{E}_i\sim  -E'_i + 2E_3+2D_6+2D_5+2D_4+D_3+D_1.$$ We see that \begin{gather*}\varphi^*(H)\equiv L=(2+a_1+a_2)D_3+(3+\epsilon)D_4+(2-a_1-a_2+\epsilon)D_1+\\
+(1-2a_1-2a_2+\epsilon)E_4+(2+\epsilon)D_5+(1+\epsilon)D_6+\epsilon E_3+\\
+(1+2a_1+2a_2-\epsilon)D_2+(a_1+2a_2-\epsilon)E'_1+(2a_1+a_2-\epsilon)E'_2.\end{gather*} Assume that $a_1+a_2\leq\frac{1}{2}$. Then $L$ is an effective divisor for sufficiently small $\epsilon$. Note that there exists $\PP^1$-fibration $g\colon\tilde{X}\rightarrow\PP^1$ with two singular fibers $$C_1=E'_1+D_2+E'_2,\quad C_2=E_4+D_1+D_4+D_5+D_6+E_3.$$ Hence, $\tilde{X}\backslash\Supp(L)\cong\FF_2\backslash(M+F_1+F_2)$, where $M$ is a unique $(-2)$-curve and $F_1, F_2$ are fibers. So, $X$ has an $H$-polar cylinder. Assume that $a_1+a_2>\frac{1}{2}$. Then \begin{gather*}\varphi^*(H)\equiv L=2D_2+(\frac{5}{2}+\epsilon)D_3+(\frac{3}{2}+\epsilon)D_1+(3+2\epsilon)D_4+\\
+(2+2\epsilon)D_5+(1+2\epsilon)D_6+2\epsilon E_3+(1-a_1+\epsilon)E'_1+\epsilon\hat{E}_1+\\
+(\frac{1}{2}+a_1-2\epsilon)E'_2+(a_1+a_1-\frac{1}{2}-2\epsilon)\hat{E}_2.\end{gather*} We see that $L$ is an effective divisor for sufficiently small $\epsilon$. Note that there exists a $\PP^1$-fibration $g\colon\tilde{X}\rightarrow\PP^1$ with three singular fibers $$C_1=E'_1+\hat{E}_1,\quad C_2=E'_1+\hat{E}_1,\quad C_3=2E_3+2D_6+2D_5+2D_4+D_1+D_3.$$ Hence, $\tilde{X}\backslash\Supp(L)\cong\FF_2\backslash(M+F_1+F_2+F_3)$, where $M$ is a unique $(-2)$-curve and $F_1, F_2, F_3$ are fibers. So, $X$ has an $H$-polar cylinder.
\end{proof}

\begin{lemma}
\label{Lemm1}
Let $X$ be a del Pezzo surface with du Val singularities and let $H$ be an ample divisor on $X$. Assume that $X$ has a singular point of type $D_m$ ($m\geq 5$). Then $X$ has an $H$-polar cylinder.
\end{lemma}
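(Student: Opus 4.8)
The plan is to mirror the strategy used in the proof of Lemma \ref{Lem1}, case by case, according to the possible Picard numbers and the precise Dynkin type of the $D_m$ singularity that can occur on a del Pezzo surface of degree one. First I would note that, since $X$ has a $D_m$ point with $m\geq 5$, the anticanonical degree bound forces $\rho(X)$ to be small and severely limits the companion singularities: if $X$ has a $D_8$ point then $\rho(X)=1$ and $H\equiv -K_X$, so the statement follows from the hypothesis that $X$ already has a $-K_X$-polar cylinder; if $X$ has a $D_7$ or $D_6$ point then $\rho(X)\leq 2$ and there is at most one extra singular point (of type $A_1$), forcing $H\equiv -K_X+aE$ for a single $(-1)$-curve $E$; if $X$ has a $D_5$ point then $\rho(X)\leq 3$ and I would enumerate the finitely many configurations of $(-1)$- and $(-2)$-curves on the minimal resolution $\varphi\colon\tilde X\to X$ using the classification of du Val del Pezzo surfaces of degree one.

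In each configuration the key step is to exhibit, on the minimal resolution $\tilde X$, a $\PP^1$-fibration $g\colon\tilde X\to\PP^1$ together with an effective $\RR$-divisor $L\equiv\varphi^*(H)$ whose support is a union of a section-like curve, some complete fibers, and the components of the reducible fibers that are not needed to cover a fiber. Concretely, using that $H$ is of type $B(r_H)$ I would write $K_X+\mu_H H\equiv \sum a_iE_i$ with $0<a_i<1$, pull this back, and then add a small multiple $\epsilon$ of a suitable fiber class and of the curves $E_i$ to make all coefficients positive; this is exactly the $\epsilon$-perturbation trick used repeatedly in Lemma \ref{Lem1}. The chains of $(-2)$-curves coming from the $D_m$ diagram (a linear $A_{m-2}$ tail with a forked end) give, after contraction, a Hirzebruch surface $\FF_2$ with its unique $(-2)$-curve $M$, so that $U\cong\tilde X\setminus\Supp(L)\cong\FF_2\setminus(M+F_1+\dots+F_k)$, which is manifestly a cylinder $Z\times\AAA^1$; hence $X$ has an $H$-polar cylinder.

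The main obstacle I anticipate is bookkeeping rather than conceptual: I must verify in each of the finitely many cases that (i) the explicit coefficients in $L$ are genuinely non-negative for small $\epsilon>0$ and that $L$ is $\RR$-linearly equivalent to $\varphi^*(H)$ for the correct range of the parameters $a$ (or $a_1\le a_2$), which may require splitting into subcases exactly as the $E_6$, $\rho=3$ case above splits at $a_1+a_2=\tfrac12$; and (ii) that the complement of the chosen fibers and section in $\FF_2$ really is a product with $\AAA^1$, i.e. that I have removed a fiber's worth of components from every reducible fiber so that $g$ restricted to $U$ is an $\AAA^1$-bundle over an affine curve. A secondary point to be careful about is the case $\rho(X)=3$ with a $D_5$ point, where $H$ may be of conic-bundle type $C(r_H)$ rather than type $B$; there I would instead use the conic bundle structure $\phi_H$ directly, choosing $L\equiv\varphi^*(H)$ supported on fibers of $\phi_H$ plus the curves $E_i$, and again identify the complement with $\FF_2$ (or $\FF_0$) minus fibers and a section.

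Once all $D_m$ ($m\ge 5$) configurations are handled this way, together with Lemma \ref{Lem1} and the remaining (lower) singularity types treated elsewhere, Theorem \ref{glav} follows in the degree-one case, which by \cite{Saw1,Saw2} is the only case left to prove.
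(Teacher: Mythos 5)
Your overall strategy coincides with the paper's: a case-by-case analysis on the minimal resolution, an explicit effective $\RR$-divisor $L\equiv\varphi^*(H)$, a $\PP^1$-fibration (possibly after further blow-ups of $\tilde X$), and the identification of $\tilde X\setminus\Supp(L)$ with a Hirzebruch surface minus a section-type curve and some fibers. The gap is that the reductions you assert to set up the cases are incorrect, and as stated they would derail the construction. (i) For a degree-one surface with a $D_7$ point one has $\rho(X)=2$, but $H$ is of conic-bundle type $C(r_H)$ — precisely because no $\rho=1$ du Val del Pezzo of degree one carries a $D_7$ point — so $H\equiv -K_X+aC$ with $C$ a $0$-curve; there is no single $(-1)$-curve $E$ with $H\equiv -K_X+aE$, contrary to your claim. (ii) $D_6$ with no companion singularity has $n=6$ exceptional curves, hence $\rho(X)=3$ and $H\equiv -K_X+aC+bE$, so your bound $\rho(X)\le 2$ for $D_6$ is false. (iii) $D_5$ alone gives $\rho(X)=4$, not $\rho(X)\le 3$, and the decomposition involves up to three $(-1)$-curves, or a $0$-curve together with up to two $(-1)$-curves. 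More generally, type-$C$ decompositions appear in nearly every $D$-case treated in the paper ($D_7$, $D_6$, $D_5+2A_1$, $D_5+A_1$, $D_5$), not only in the one $\rho=3$ subcase you flag.

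A second substantive point: the uniform ``$\epsilon$-perturbation of $\varphi^*(-K_X)$'' does not always yield an effective $L$. When the coefficients are large the paper must re-express pullbacks of the boundary curves through linear equivalences (for instance $\hat{E}+D_1+E_4\sim E_2+D_2+D_3+D_5+D_6+E_3$ in the $D_5+A_1$ case) and split the argument at explicit thresholds such as $b\ge 1$; moreover, in several cases the fibration lives only on a further blow-up $\bar X\to\tilde X$ and the complement is $\FF_1$ minus a $(-1)$-curve and fibers, not $\FF_2$ minus the $(-2)$-curve. These threshold splittings, the auxiliary blow-ups, and the verification in each configuration that $\Supp(L)$ consists exactly of all components of the degenerate fibers plus a section are the real content of the lemma, and your outline leaves them unproved.
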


\begin{proof}
Let $\varphi\colon\tilde{X}\rightarrow X$ be the minimal resolution of singularities of $X$ and let $D=\sum_{i=1}^n D_i$ be the exceptional divisor of $\varphi$. If $n=8$ then $\rho(X)=1$ and every divisor $H\equiv a(-K_X)$. So, we may assume that $n\leq 7$.

Assume that $m=7$. Since there no exist del Pezzo surface $X$ with $\rho(X)=1$ and a singular point of type $D_7$ (see, e. g., {\cite{Fur}}, {\cite{Ma}}, {\cite{Ye}}), we see that $H$ is of type $C(r_H)$. Hence, $H\equiv-K_X+aC$, where $C$ is a $(0)$-curve and $a>0$. We have the following configuration on $\tilde{X}$.\\
\begin{tikzpicture}
\draw  (1.5,0) -- (2.5,1.8) (2.3,1)--(2.3,3.5) (1.5,4.5)--(2.5,3) (2,2)--(5.5,2) (5,1.5)--(5,3) (4.5,2.5)--(7.5,2.5) (7,2)--(7,4); \draw [dashed] (0,0.5) -- (2,0.5) (6.8,3.5)--(8,3.5) (6,1.5)--(6,3);
\draw (0.5,0.5) node [above] {$E_1$} (1.6,0.5) node [above] {$D_1$} (2.3,2.6) node [left] {$D_2$} (2.1,3.8) node [above] {$D_3$} (3.5,2) node [above] {$D_4$} (5,3) node [above] {$D_5$} (7.5,2.5) node [right] {$D_6$} (7,4) node [above] {$D_7$} (8,3.5) node [right] {$E_2$} (6,1.5) node [below] {$E_3$};
\end{tikzpicture}\\
where $E_1,E_2,E_3$ are $(-1)$-curves, $D_1,\ldots, D_2$ are $(-2)$-curves. We have $$\tilde{C}\sim R= 2E_2+2D_7+2D_6+2D_5+2D_4+2D_2+D_1+D_3,$$ where $\tilde{C}$ is the proper transform of $C$ on $\tilde{X}$. We see that
\begin{gather*}\varphi^*(H)\equiv L= (2+2a)D_5+(2+2a+\epsilon)D_4+(1+a+\epsilon)D_3+\\
+(2+2a+2\epsilon)D_2+(1+a+2\epsilon)D_1
+2\epsilon E_1+(2+2a-\epsilon)D_6+\\
+(1+2a-\epsilon)D_7+(1-\epsilon)E_3+(2a-\epsilon) E_2.\end{gather*}
So, $L$ is an effective divisor for sufficiently small $\epsilon$. Note that there exists a $\PP^1$-fibration $g\colon\tilde{X}\rightarrow\PP^1$ with two singular fibers $$C_1=E_3+D_6+D_7+E_2,\quad C_2=2E_1+2D_1+2D_2+D_3+D_4.$$ Hence, $\tilde{X}\backslash\Supp(L)\cong\FF_2\backslash(M+F_1+F_2)$, where $M$ is a unique $(-2)$-curve and $F_1, F_2$ are fibers. So, $X$ has an $H$-polar cylinder.

Assume that $m=6$. Since $n\leq 7$, we see that there exist at most one singular point of type $A_1$. Assume that there exist a singular point of type $A_1$. Then $H$ is of type $B(r_H)$ and $H\equiv-K_X+aE$, where $E$ is a $(-1)$-curve and $0<a<1$. We have the following configuration on $\tilde{X}$.\\

\begin{tikzpicture}
\draw  (1.5,0) -- (2.5,1.8) (2.3,1)--(2.3,3.5) (1.5,4.5)--(2.5,3) (2,2)--(5.5,2) (5,0.5)--(5,3) (4.5,2.5)--(7.5,2.5) (6.8,3.5)--(8,3.5); \draw [dashed] (0,0.5) -- (2,0.5) (6.8,3.5)--(8,3.5)  (7,2)--(7,4) (4.5,1) -- (7,1) (6.2,0.5)--(7.8,2);
\draw (0.5,0.5) node [above] {$E_1$} (1.6,0.5) node [above] {$D_1$} (2.3,2.6) node [left] {$D_2$} (2.1,3.8) node [above] {$D_3$} (3.5,2) node [above] {$D_4$} (5,3) node [above] {$D_5$} (7.5,2.5) node [right] {$D_6$} (7,4) node [above] {$E_2$} (8,3.5) node [right] {$D_7$} (6.9,1) node [right] {$E'$} (7.5,1.5) node [right] {$\hat{E}$};
\end{tikzpicture}\\
where $\hat{E}$ is the proper transform of $E$ on $\tilde{X}$, $\hat{E}, E', E_1, E_2$ are $(-1)$-curves $D_1,\ldots,D_6$ are $(-2)$-curves. Note that \begin{gather*}\varphi^*(H)\equiv L= 2D_5+ (2+\epsilon)D_4+(1+\epsilon)D_3+(2+2\epsilon)D_2+(1+2\epsilon)D_1+\\
+2\epsilon E_1+(1+\epsilon)D_6+\epsilon D_7+2\epsilon E_2+(1-2\epsilon)E'+(a-2\epsilon)\hat{E}.\end{gather*}
So, $L$ is an effective divisor for sufficiently small $\epsilon$. Note that there exists a $\PP^1$-fibration $g\colon\tilde{X}\rightarrow\PP^1$ with three singular fibers $$C_1=2E_2+D_6+D_7,\quad C_2=2E_1+2D_1+2D_2+D_3+D_4,\quad C_3=E'+\hat{E}.$$ Hence, $\tilde{X}\backslash\Supp(L)\cong\FF_2\backslash(M+F_1+F_2+F_3)$, where $M$ is a unique $(-2)$-curve and $F_1, F_2,F_3$ are fibers. So, $X$ has an $H$-polar cylinder.

Assume that $m=n=6$. Then $H\equiv-K_X+aC+bE$, where $C$ is a $(0)$-curve, $E$ is a $(-1)$-curve and $a\geq 0, 1>b\geq 0$. We have the following configuration on $\tilde{X}$.\\

\begin{tikzpicture}
\draw  (1.5,0) -- (2.5,1.8) (2.3,1)--(2.3,3.5) (1.5,4.5)--(2.5,3) (2,2)--(5.5,2) (5,0.5)--(5,3) (4.5,2.5)--(7.5,2.5); \draw [dashed] (0,0.5) -- (2,0.5) (0,4.3)-- (1.8,4.3) (7,2)--(7,4) (6,2) -- (6,4) (4.5,1) -- (7,1) (6.2,0.5)--(7.8,2);
\draw (0.5,0.5) node [above] {$E_1$} (1.6,0.5) node [above] {$D_1$} (2.3,2.6) node [left] {$D_2$} (2.1,3.8) node [above] {$D_3$} (3.5,2) node [above] {$D_4$} (5,3) node [above] {$D_5$} (7.5,2.5) node [right] {$D_6$} (6,4) node [above] {$E_2$} (7,4) node [above] {$E_3$}  (6.9,1) node [right] {$E'$} (7.5,1.5) node [right] {$\hat{E}$} (1,4.3) node [below] {$E_4$};
\end{tikzpicture}\\
where $\hat{E}$ is the proper transform of $E$ on $\tilde{X}$, $\hat{E}, E', E_1,\ldots, E_4$ are $(-1)$-curves $D_1,\ldots,D_6$ are $(-2)$-curves. We have $$\tilde{C}\sim R= 2E_2+2D_6+2D_5+2D_4+2D_2+D_1+D_3,$$ where $\tilde{C}$ is the proper transform of $C$ on $\tilde{X}$. Assume that $b>0$. We see that
\begin{gather*}\varphi^*(H)\equiv L= (2+2a)D_5+(2+2a+\epsilon)D_4+(1+a+\epsilon)D_3+\\+(2+2a+2\epsilon)D_2+(1+a+2\epsilon)D_1+\\
+2\epsilon E_1+(1+2a+\epsilon)D_6+(2a+\epsilon) E_2+\epsilon E_3+(1-2\epsilon) E'+(b-2\epsilon)\hat{E}.\end{gather*}
So, $L$ is an effective divisor for sufficiently small $\epsilon$. Note that there exists a $\PP^1$-fibration $g\colon\tilde{X}\rightarrow\PP^1$ with three singular fibers $$C_1=2E_1+2D_1+2D_2+D_3+D_4,\quad C_2=E_2+D_6+E_3,\quad C_3=E'+\hat{E}.$$ Hence, $\tilde{X}\backslash\Supp(L)\cong\FF_2\backslash(M+F_1+F_2+F_3)$, where $M$ is a unique $(-2)$-curve and $F_1, F_2,F_3$ are fibers. So, $X$ has an $H$-polar cylinder. Assume that $b=0$. Then \begin{gather*}\varphi^*(H)\equiv L= (2+2a)D_4+\epsilon E_4+ (1+a+\epsilon)D_3+(2+2a+\epsilon)D_2+\\
+(1+a+\epsilon)D_1+\epsilon E_3+\\
+(2+2a-\epsilon)D_5+(1+2a-\epsilon)D_6+(1-\epsilon)E'+(2a-\epsilon) E_2.\end{gather*}
Then $L$ is an effective divisor for sufficiently small $\epsilon$. Note that there exists a $\PP^1$-fibration $g\colon\tilde{X}\rightarrow\PP^1$ with two singular fibers $$C_1=E'+D_5+D_6+E_2,\quad C_2=E_1+D_1+D_2+D_3+E_4.$$ Hence, $\tilde{X}\backslash\Supp(L)\cong\FF_2\backslash(M+F_1+F_2)$, where $M$ is a unique $(-2)$-curve and $F_1, F_2$ are fibers. So, $X$ has an $H$-polar cylinder.

Assume that $m=5$ and $n=7$. So, we have two cases for singularities $D_5+A_2$ and $D_5+2A_1$. Consider the case $D_5+A_2$. Then $H$ is of type $B(r_H)$. Hence, $H\equiv-K_X+a E$, where $E$ ia a $(-1)$-curve and $0<a<3$. Moreover, $E$ contains singular point of type $A_2$. So, we have the following configuration on $\tilde{X}$.\\

\begin{tikzpicture}
\draw  (0.5,0.5) -- (3.5,0.5) (3,0)--(4.5,2) (4.3,1.5)--(4.3,4) (3.5,5)--(4.5,3) (4,2.5) -- (6,2.5) (0.3,1)--(0.3,3.5) (0,3)-- (2,3); \draw [dashed] (1,0) -- (0,2) (5.5,2) -- (5.5,4) (1.5,2.5)--(1.5,4) (3.6,0.6)--(2.5,2);
\draw (2,0.5) node [above] {$D_3$} (2,3) node [right] {$D_1$} (0.3,3.5) node [above] {$D_2$} (1.5,4) node [above] {$\hat{E}$} (3.6,1) node [above] {$D_4$} (4.3,4) node [above] {$D_5$} (3.8,4) node [below] {$D_6$} (6,2.5)  node [right] {$D_7$} (5.5,4) node [above] {$E_2$} (0.8,0.7) node [above] {$E_1$} (2.5,2) node [above] {$E_3$};
\end{tikzpicture}\\
where $\hat{E}$ is the proper transform of $E$ on $\tilde{X}$, $\hat{E}, E_1, E_2$ are $(-1)$-curves $D_1,\ldots,D_7$ are $(-2)$-curves. Moreover, $\varphi^{*}(E)=\hat{E}+\frac{2}{3}D_2+\frac{1}{3}D_1$. We see that $$\hat{E}+D_1+D_2+E_1\sim 2E_2+2D_7+2D_5+D_6+D_4.$$ So, $$\varphi^{*}(E)\equiv-E_1-\frac{2}{3}D_1-\frac{1}{3}D_2+2E_2+2D_7+2D_5+D_6+D_4.$$ We have \begin{gather*}\varphi^*(H)\equiv L=x D_3+(x-\frac{1}{3}-\frac{1}{3}a)D_1+(2x-\frac{2}{3}-\frac{2}{3}a)D_2+\\
+(3x-1-a)E_1+(y+a)D_4+(y-1)E_3+(y-1+a)D_6+\\
+(2y-2+2a)D_5+(2y-3+2a)D_7+(2y-4+2a)E_2,\end{gather*}
where $x+y=3$. So, there exist $x,y$ such that $L$ is an effective divisor. Let $h\colon\bar{X}\rightarrow\tilde{X}$ be the blow-ups the point of intersection $D_3$ and $D_4$ and let $\bar{E}_1,\bar{E}_2,\bar{E}_3,\bar{D}_1,\ldots,\bar{D}_7$ be the proper transform of $E_1,E_2,E_3,D_1,\ldots,D_7$ correspondingly and $F$ is an exceptional $(-1)$-curve. Note that there exists a $\PP^1$-fibration $g\colon\bar{X}\rightarrow\PP^1$ with two singular fibers $$C_1=3\bar{E}_1+2\bar{D}_2+\bar{D}_1+\bar{D}_3,\quad C_2=\bar{E}_3+\bar{D}_4+2\bar{D}_5+\bar{D}_6+2\bar{D}_7+2\bar{E}_2.$$ Hence, $\tilde{X}\backslash\Supp(L)\cong\FF_1\backslash(M+F_1+F_2)$, where $M$ is a unique $(-1)$-curve and $F_1, F_2$ are fibers. So, $X$ has an $H$-polar cylinder.

Consider the case $D_5+2A_1$. Then $H$ is  of type $C(r_H)$. Hence, $H\equiv-K_X+aC$, where $C$ ia a $(0)$-curve and $a>0$. Moreover, there exist a curve $C'\sim C$ such that $C'$ contain both singular point of type $A_1$. So, we have the following configuration on $\tilde{X}$.\\

\begin{tikzpicture}
\draw  (1,0) -- (0,2) (0,3.5)--(1,5) (3,0)--(4,2) (3.9,1.5)--(3.9,4) (4,3.5)--(3,5) (3.5,3)--(5.5,3) (5,2.5)--(5,4.5); \draw [dashed] (0.5,0.5) -- (3.5,0.5) (0.3,1.2) -- (0.3,4.2) (4.5,4)--(6,4) (0.5,4.7)--(3.5,4.7) (4.5,3.5)-- (4.5,1);
\draw (2,0.5) node [above] {$E_2$} (0.6,1) node [right] {$D_1$} (0.3,2.5) node [right] {$E_1$} (0.3,4) node [right] {$D_2$} (2,4.7) node [below] {$E_3$} (3.5,1) node [left] {$D_3$} (3.6,4.1) node [left] {$D_5$} (4,2.2) node [left] {$D_4$} (5.5,3) node [right] {$D_6$} (5,4.5) node [above] {$D_7$} (6,4) node [right] {$E_4$} (4.5,1.5) node [right] {$E_5$};
\end{tikzpicture}\\
where $E_1, E_2, E_3, E_4$ are $(-1)$-curves $D_1,\ldots,D_7$ are $(-2)$-curves. Moreover, $$\varphi^{*}(C)\sim R=2E_4+2D_7+2D_6+2D_4+D_3+D_5.$$
We have
\begin{gather*}\varphi^*(H)\equiv L= (2+2a)D_4+\epsilon D_1+(1+a+\epsilon)D_3+2\epsilon E_2+\epsilon D_2+\\
+(1+a+\epsilon)D_5+2\epsilon E_3+(2+2a-2\epsilon)D_6+(1+2a-2\epsilon)D_7+\\
+(1-2\epsilon)E_5+(2a-2\epsilon) E_4.\end{gather*}
So, $L$ is an effective divisor for sufficiently small $\epsilon$. Note that there exists a $\PP^1$-fibration $g\colon\tilde{X}\rightarrow\PP^1$ with three singular fibers $$C_1=2E_2+D_1+D_3,\quad C_2=2E_3+D_2+D_5,\quad C_3=E_5+D_6+D_7+E_4.$$ Hence, $\tilde{X}\backslash\Supp(L)\cong\FF_2\backslash(M+F_1+F_2+F_3)$, where $M$ is a unique $(-2)$-curve and $F_1, F_2, F_3$ are fibers. So, $X$ has an $H$-polar cylinder.

Assume that $m=5$ and $n=6$. So, we have two singular points $P$ and $Q$ on $X$. Moreover, $P$ is of type $D_5$ and $Q$ is of type $A_1$. Since $\deg(X)=1$, we see that $\rho(X)=3$. Assume that $H$ is of type $C(r_H)$. Then $H\equiv-K_X+aC+bE$, where $C$ ia a $(0)$-curve, $E$ ia a $(-1)$-curve and $a>0, 2>b\geq 0$. Moreover, $E$ contains a singular point $Q$. So, we have the following configuration on $\tilde{X}$.\\

\begin{tikzpicture}
\draw (0.3,1.2) -- (0.3,4.2)  (3,0)--(4,2) (3.9,1.5)--(3.9,4) (4,3.5)--(3,5) (3.5,3)--(5.5,3) (5,2.5)--(5,4.5); \draw [dashed] (0.5,0.5) -- (3.5,0.5) (1,0) -- (0,2) (0,3.5)--(1,5) (4.5,4)--(6,4) (0.1,2.6)--(3.8,4.2) (4.5,3.5)-- (4.5,1) (3.9,1)-- (2.8,2.5);
\draw (2,0.5) node [above] {$E_2$} (0.6,1) node [right] {$E_1$} (0.3,2.5) node [right] {$D_1$} (0.3,4) node [right] {$\hat{E}$} (3.5,1) node [left] {$D_2$} (3.3,4.7) node [right] {$D_4$} (4,2.2) node [left] {$D_3$} (5.5,3) node [right] {$D_5$} (5,4.5) node [above] {$D_6$} (6,4) node [right] {$E_3$} (2,3.6) node [above] {$E_4$} (4.5,1.5) node [right] {$E_5$} (2.9,2.5) node [left] {$E_6$};
\end{tikzpicture}\\
where $\hat{E}$ is the proper transform of $E$ on $\tilde{X}$, $\hat{E}, E_1, \ldots, E_6$ are $(-1)$-curves $D_1,\ldots,D_6$ are $(-2)$-curves. Moreover, $\varphi^{*}(E)\equiv\hat{E}+\frac{1}{2}D_1$ and $$\varphi^{*}(C)\equiv R_1=2E_3+2D_6+2D_5+2D_3+D_4+D_2.$$ Assume that $b\geq 1$. Then \begin{gather*}\varphi^*(H)\equiv L= (2+a)D_4+(1+\epsilon)E_4+(\frac{b}{2}+\epsilon) D_1+(b-1+\epsilon)\hat{E}+\\
+(3+2a-\epsilon)D_3+(2+a-\epsilon)D_2+(1-\epsilon)E_6+(2+2a-\epsilon)D_5+\\
+(1+2a-\epsilon)D_6+(2a-\epsilon)E_3.\end{gather*} So, $L$ is an effective divisor for sufficiently small $\epsilon$. Note that there exists a $\PP^1$-fibration $g\colon\tilde{X}\rightarrow\PP^1$ with two singular fibers $$C_1=E_4+D_1+\hat{E},\quad C_2=E_3+D_6+D_5+D_3+D_2+E_6.$$ Hence, $\tilde{X}\backslash\Supp(L)\cong\FF_2\backslash(M+F_1+F_2)$, where $M$ is a unique $(-2)$-curve and $F_1, F_2$ are fibers. So, $X$ has an $H$-polar cylinder. So, we may assume that $b<1$. Note that $$\hat{E}+D_1+E_4\sim E_2+D_2+D_3+D_5+D_6+E_3.$$ So, $$\varphi^{*}(E)\equiv R_2=-E_4-\frac{1}{2}D_1+E_2+D_2+D_3+D_5+D_6+E_3.$$ We have
\begin{gather*}\varphi^*(H)\equiv L= (2+2a)D_3+(b+\epsilon) E_2+\epsilon E_6+(1+a+b+\epsilon)D_2+\\
+(\epsilon-\frac{b}{2}) D_1+(2\epsilon-b) E_4+(1+a+\epsilon)D_4+(1-2\epsilon)E_5+\\
+(2+2a+b-2\epsilon)D_5+(1+2a+b-2\epsilon)D_6+(2a+b-2\epsilon) E_3.\end{gather*} We see that $L$ is an effective divisor for $a+\frac{b}{2}>\epsilon>\frac{b}{2}$. Note that there exists a $\PP^1$-fibration $g\colon\tilde{X}\rightarrow\PP^1$ with three singular fibers $$C_1=2E_4+D_1+D_4,\quad C_2=E_2+D_2+E_6,\quad C_3=E_5+D_5+D_6+E_3.$$ Hence, $\tilde{X}\backslash\Supp(L)\cong\FF_2\backslash(M+F_1+F_2+F_3)$, where $M$ is a unique $(-2)$-curve and $F_1, F_2, F_3$ are fibers. So, $X$ has an $H$-polar cylinder.

Assume that $H$ is of type $B(r_H)$. Then $$H\equiv-K_X+a_1E_1+a_2E_2,$$ where $E_1,E_2$ are $(-1)$-curves. As above, we may assume that $E_1$ contains a singular point $Q$ and $2>a_1\geq 0$, $1>a_2\geq 0$. So, we have the following configuration on $\tilde{X}$.\\

\begin{tikzpicture}
\draw (0.5,0.5) -- (3.5,0.5) (0.3,1.2) -- (0.3,4.2) (3,0)--(4,2) (3.9,1.5)--(3.9,4) (4,3.5)--(3,5) (3.5,3)--(5.5,3); \draw [dashed] (1,0) -- (0,2) (0,3.5)--(1,5) (5,2.5)--(5,4.5) (2.5,0)--(1.5,3) (1.7,2)--(1.7,4) (3.5,0.7)--(1.5,4) (4,4.5)--(2.5,4.5);
\draw (1.7,0.5) node [below] {$D_2$} (0.6,1) node [right] {$E_3$} (0.3,2.5) node [right] {$D_1$} (0.3,4) node [right] {$\hat{E}_1$} (4.2,1) node [left] {$D_3$} (3.6,4.1) node [left] {$D_5$} (4,2.2) node [left] {$D_4$} (5.5,3) node [right] {$D_6$} (5,4.5) node [above] {$E_5$} (1.7,4) node [above] {$\hat{E}_2$} (2,1.5) node [right] {$E_4$} (2,3.1) node [right] {$E_6$} (4,4.5) node [above] {$E_7$};
\end{tikzpicture}\\
where $\hat{E}_1$ and $\hat{E}_2$ are the proper transform of $E_1$ and $E_2$ on $\tilde{X}$ correspondingly, $\hat{E}_1, \hat{E}_2, E_3, \ldots, E_7$ are $(-1)$-curves $D_1,\ldots,D_6$ are $(-2)$-curves. Moreover, $\varphi^{*}(E_1)\equiv\hat{E}_1+\frac{1}{2}D_1$. Since
$$\hat{E}_1+D_1+E_3\sim \hat{E}_2+E_4\sim D_3+D_5+2D_4+2D_6+2E_5,$$ we see that $\varphi^{*}(E_1)\equiv -E_3-\frac{1}{2}D_1+D_3+D_5+2D_4+2D_6+2E_5$ and $\varphi^{*}(E_2)\sim-E_4+D_3+D_5+2D_4+2D_6+2E_5$.  Then \begin{gather*}\varphi^*(H)\equiv L= x D_2+(x-1-a_2)E_4+\\
+(x-1-\frac{a_1}{2})D_1+(2x-2-a_1)E_3+(y+a_1+a_2)D_3+(y-1)E_6+\\
+(y-1+a_1+a_2)D_5+(2y-2+2a_1+2a_2)D_4+\\
+(2y-3+2a_1+2a_2)D_6+(2y-4+2a_1+2a_2)E_5,\end{gather*} where $x+y=3$. Since $a_1>0$ or $a_2>0$, we see that there exist $x$ and $y$ such that $L$ is an effective divisor. Let $h\colon\bar{X}\rightarrow\tilde{X}$ be the blow-ups the point of intersection $D_2$ and $D_3$ and let $\bar{E}_1,\ldots, \bar{E}_6,\bar{D}_1,\ldots,\bar{D}_6$ be the proper transform of $\hat{E}_1,\hat{E}_2,E_3,\ldots,E_6,D_1,\ldots,D_6$ correspondingly. Note that there exists a $\PP^1$-fibration $g\colon\bar{X}\rightarrow\PP^1$ with two singular fibers $$C_1=2\bar{E}_3+\bar{D}_1+\bar{D}_2+\bar{E}_4,\quad C_2=\bar{E}_5+2\bar{D}_6+2\bar{D}_4+\bar{D}_5+\bar{D}_3+\bar{E}_6.$$ Hence, $\tilde{X}\backslash\Supp(L)\cong\FF_1\backslash(M+F_1+F_2)$, where $M$ is a unique $(-1)$-curve and $F_1, F_2$ are fibers. So, $X$ has an $H$-polar cylinder.

Assume that $m=5$ and $n=5$. Then we have only one singular point of type $D_5$. Since $\deg(X)=1$, we see that $\rho(X)=4$. Assume that $H$ is of type $C(r_H)$ and $H\equiv-K_X+aC$, where $C$ is a $(0)$-curve. So, we have the following configuration on $\tilde{X}$.\\

\begin{tikzpicture}
\draw  (1,2) -- (6,2) (1.5,1)--(1.5,3) (3,1)--(3,4) (2,3.6)--(5,3.6) (5.5,1)--(5.5,3); \draw [dashed] (0,1.3) -- (2,1.3) (0,2.7)--(2,2.7) (2.5,1.3)--(4,0.7) (4.2,3)--(4.2,5) (5,2.5)--(6.5,2.5) (5,1.2)--(6.5,1.2);
\draw (1,2) node [left] {$D_2$} (0,1.3) node [above] {$E_2$} (0,2.7) node [above] {$E_3$} (4,0.7) node [right] {$E_4$} (1.5,3) node [above] {$D_1$} (4.2,4.5) node [right] {$E_1$} (5,3.7) node [right] {$D_5$} (3,4) node [above] {$D_4$} (5.5,1) node [below] {$D_3$} (6.5,2.5)  node [right] {$E_6$} (6.5,1.2) node [right] {$E_5$};
\end{tikzpicture}\\
where $\varphi^{*}(C)\equiv R=2E_1+2D_5+2D_4+2D_2+D_1+D_3.$ We have \begin{gather*}\varphi^*(H)\equiv L= (2+2a)D_2+\epsilon E_2+\epsilon E_3+(1+a+\epsilon)D_1+\epsilon E_5+\epsilon E_6+\\
+(1+a+\epsilon)D_3+(1-2\epsilon) E_4+(2+2a-2\epsilon)D_4+\\
+(1+2a-2\epsilon)D_5+(2a-2\epsilon) E_1\end{gather*} So, $L$ is an effective divisor for a sufficiently small $\epsilon$. Note that there exists a $\PP^1$-fibration $g\colon\tilde{X}\rightarrow\PP^1$ with three singular fibers $$C_1=E_2+D_1+E_3,\quad C_2=E_1+D_5+D_4+E_4,\quad C_3=E_5+D_3+E_6.$$ Hence, $\tilde{X}\backslash\Supp(L)\cong\FF_2\backslash(M+F_1+F_2+F_3)$, where $M$ is a unique $(-2)$-curve and $F_1, F_2,F_3$ are fibers. So, $X$ has an $H$-polar cylinder.

Assume that $H\equiv-K_X+aC+bE_1$ where $C$ is a $(0)$-curve, $E_1$ is a $(-1)$-curves, $a\geq 0$ and $1>b>0$. Then we have the following configuration on $\tilde{X}$.\\
\begin{tikzpicture}
\draw  (1.5,0) -- (2.5,1.8) (2.3,1)--(2.3,3.5) (1.5,4.5)--(2.5,3) (2,2)--(5.5,2) (5,0.5)--(5,3); \draw [dashed] (0,0.5) -- (2,0.5) (4.5,2.5)--(7.5,2.5) (0,4.2) -- (1.9,4.2) (4,1)--(4,4) (3.5,3.5)--(6,3.5)(4.5,1)--(7.5,1);
\draw (0.5,0.5) node [above] {$E_2$} (1.6,0.5) node [above] {$D_1$} (2.3,2.6) node [left] {$D_2$} (2.2,3.7) node [above] {$D_3$} (3,2) node [above] {$D_4$} (5,1.5) node [right] {$D_5$} (7.5,2.5) node [right] {$E_4$} (6,3.5) node [right] {$\hat{E}_1$} (4,4) node [above] {$E'_1$} (0.5,4.2) node [below] {$E_3$} (7.5,1) node [right] {$E_5$};
\end{tikzpicture}\\
where $\hat{E}_1$ is the proper transform of $E_1$ on $\tilde{X}$, $\hat{E}_1, E'_1,E_2, E_3, E_4,E_5$ are $(-1)$-curves, $D_1,\ldots,D_5$ are $(-2)$-curves, $$\varphi^{*}(C)\equiv R=2E_5+2D_5+2D_4+2D_2+D_1+D_3.$$ We have \begin{gather*}\varphi^*(H)\equiv L= (2+2a)D_4+\epsilon E_2+\epsilon E_3+(1+a+\epsilon)D_1+\\+(1+a+\epsilon)D_3+(2+2a+\epsilon)D_2+\epsilon E_4+(2a+\epsilon) E_5+\\
+(1+2a+\epsilon)D_5+(1-2\epsilon)E'_1+(b-2\epsilon)\hat{E}_1\end{gather*} So, $L$ is an effective divisor for a sufficiently small $\epsilon$. Note that there exists a $\PP^1$-fibration $g\colon\tilde{X}\rightarrow\PP^1$ with three singular fibers $$C_1=E_2+D_1+D_2+D_3+E_3,\quad C_2=E_4+D_5+E_5,\quad C_3=E'_1+\hat{E}_1.$$ Hence, $\tilde{X}\backslash\Supp(L)\cong\FF_2\backslash(M+F_1+F_2+F_3)$, where $M$ is a unique $(-2)$-curve and $F_1, F_2,F_3$ are fibers. So, $X$ has an $H$-polar cylinder.

Assume that $H\equiv-K_X+aC+b_1E_1+b_2E_2$ where $C$ is a $(0)$-curve, $E_1, E_2$ are $(-1)$-curves, $a\geq 0$ and $1>b_2\geq b_1>0$. Then we have the following configuration on $\tilde{X}$.\\
\begin{tikzpicture}
\draw  (0,0.5) -- (8,0.5) (6.5,0) -- (7.5,1.8) (7.3,1.2)--(7.3,3.5) (6.5,4.5)--(7.5,3) (7,2)--(11,2); \draw [dashed] (10.5,0.5)--(10.5,3) (1.5,0)--(0.5,3.5) (0.5,3)--(1.5,5) (2.8,0)--(1.8,3.5) (1.8,3)--(2.8,5) (4,0)--(3,3.5) (3,3)--(4,5) (7.2,1)--(5,2);
\draw (0.5,0.5) node [above] {$D_5$} (6.6,0.5) node [above] {$D_4$} (7.3,2.6) node [left] {$D_2$} (7.1,3.8) node [above] {$D_3$} (11,2) node [above] {$D_1$} (10.5,3) node [above] {$E_4$} (1,1.8) node [left] {$E'_1$} (2.3,1.8) node [left] {$E'_2$} (3.5,1.8) node [left] {$E'_3$} (0.5,0.5) (1.5,5) node [left] {$\hat{E}_1$} (2.8,5) node [left] {$\hat{E}_2$} (4,5) node [left] {$\hat{E}_3$} (5,2) node [above] {$E_5$};
\end{tikzpicture}\\
where $\hat{E}_1, \hat{E}_2$ are the proper transform of $E_1,E_2$ on $\tilde{X}$ correspondingly, $\hat{E}_1, \hat{E}_2, \hat{E}_3, E'_1,E'_2, E'_3, E_4, E_5$ are $(-1)$-curves, $D_1,\ldots,D_5$ are $(-2)$-curves, $\varphi^{*}(C)\equiv R=2E'_3+2D_5+2D_4+2D_2+D_1+D_3.$ We have $$\hat{E}_i\sim -E'_i+D_4+D_3+2D_2+2D_1+2E_4$$ for $i=1,2,3$. We have \begin{gather*}\varphi^*(H)\equiv L= (x+2a)D_5+(x-1-b_1)E'_1+(x-1-b_2)E'_2+\\
+(x-1+a)E'_3+(y+2a+b_1+b_2)D_4+(y-1)E_5+\\
+(y-1+a+b_1+b_2)D_3+(2y-2+2a+2b_1+2b_2)D_2+\\
+(2y-3+a+2b_1+2b_2)D_1+(2y-4+2b_1+2b_2)E_4,\end{gather*} where $x+y=3$. Since $b_1>0$ and $b_2>0$ we see that there exist $x$ and $y$ such that $L$ is an effective divisor. Let $h\colon\bar{X}\rightarrow\tilde{X}$ be the blow-ups the point of intersection $D_4$ and $D_5$ and let $\bar{E}_1,\ldots, \bar{E}_5,\bar{D}_1,\ldots,\bar{D}_5$ be the proper transform of $E'_1,E'_2,E'_3,E_4,E_5,D_1,\ldots,D_5$ correspondingly. Note that there exists a $\PP^1$-fibration $g\colon\bar{X}\rightarrow\PP^1$ with two singular fibers $$C_1=\bar{D}_5+\bar{E}_1+\bar{E}_2+\bar{E}_3,\quad C_2=2\bar{E}_4+2\bar{D}_1+2\bar{D}_2+\bar{D}_4+\bar{D}_3+\bar{E}_5.$$ Hence, $\tilde{X}\backslash\Supp(L)\cong\FF_1\backslash(M+F_1+F_2)$, where $M$ is a unique $(-1)$-curve and $F_1, F_2$ are fibers. So, $X$ has an $H$-polar cylinder.

Assume that $H\equiv-K_X+b_1E_1+b_2E_2+b_3 E_3$ where  $E_1, E_2, E_3$ are $(-1)$-curves, $1>b_1\geq b_2\geq b_3\geq 0$. We may assume that at least $b_1>0$ and $b_2>0$. Put $\hat{E}_1, \hat{E}_2, \hat{E}_3$ are the proper transform of $E_1,E_2,E_3$ on $\tilde{X}$ correspondingly. Put $b=b_1+b_2+b_3$ Then \begin{gather*}\varphi^*(H)\equiv L= xD_5+(x-1-b_1)E'_1+(x-1-b_2)E'_2+\\
+(x-1-b_3)E'_3+(y+b)D_4+(y-1)E_5+(y-1+a+b)D_3+\\
+(2y-2+2a+2b)D_2+(2y-3+a+2b)D_1+(2y-4+2b)E_4,\end{gather*} where $x+y=3$. Since $b_1>0$ and $b_2>0$ we see that there exist $x$ and $y$ such that $L$ is an effective divisor. As above, $$\tilde{X}\backslash\Supp(L)\cong\FF_1\backslash(M+F_1+F_2),$$ where $M$ is a unique $(-1)$-curve and $F_1, F_2$ are fibers. So, $X$ has an $H$-polar cylinder.
\end{proof}

\begin{lemma}
\label{Lemm3}
Let $X$ be a del Pezzo surface with du Val singularities and let $H$ be an ample divisor on $X$. Assume that $X$ has a singular point of type $A_7$. Then $X$ has an $H$-polar cylinder.
\end{lemma}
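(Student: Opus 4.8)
The argument follows the pattern of Lemmas \ref{Lem1} and \ref{Lemm1}. Let $\varphi\colon\tilde X\to X$ be the minimal resolution, and let $D_1,\dots,D_7$ be the chain of $(-2)$-curves over the $A_7$ point, $D_i\cdot D_{i+1}=1$. Since $\rho(\tilde X)=9$ and the $A_7$ configuration already uses $7$ of its generators, either $\rho(X)=2$ and $A_7$ is the only singular point of $X$, or $X$ has one more singular point; as $\deg X=1$ the latter can only be a point of type $A_1$, and then $\rho(X)=1$, so $H\equiv -K_X$ and the desired cylinder is a $(-K_X)$-polar cylinder, which exists by Theorem \ref{theorem:cylinders:-K_S}. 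Hence we may assume $\rho(X)=2$ and that $A_7$ is the only singularity.

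Next I would show that $H$ is necessarily of type $B(1)$. Since $\rho(X)=2$ we have $r_H\le 1$; if $r_H=0$ then $-K_X\equiv\mu_H H$ and a $(-K_X)$-polar cylinder is an $H$-polar cylinder, so assume $r_H=1$. If $H$ were of type $C(1)$ there would be a conic bundle $X\to\PP^1$; pulling a fibre back to $\tilde X$ would give a conic bundle on $\tilde X$ in which the whole chain $D_1,\dots,D_7$ lies inside a single fibre, and a short numerical check — using that the intersection form on an $A_7$-chain is negative definite, that the multiplicities of the $(-1)$-curves in a conic-bundle fibre sum to $2$, and that $\rho(\tilde X)=9$ — rules this out. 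So $H$ is of type $B(1)$, and after rescaling we may take $\mu_H=1$, i.e. $H\equiv -K_X+aE$ with $E$ a $(-1)$-curve spanning an extremal ray of $\overline{\NE(X)}$. Either the proper transform $\hat E$ is disjoint from the chain, so that $0<a<1$, or $\hat E$ meets the chain, which (checking $E^2$) is only possible when it meets an end component in one point; then $\varphi^*E=\hat E+\frac18D_1+\frac28D_2+\dots+\frac78D_7$ after reindexing, and $H\cdot E>0$ forces $0<a<8$.

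For each of these configurations I would use the classification of weak del Pezzo surfaces of degree $1$ carrying an $A_7$ singularity (\cite{Fur}, \cite{Ma}, \cite{Ye}) to write down the graph of negative curves on $\tilde X$ — the chain $D_1,\dots,D_7$, the finitely many $(-1)$-curves $E,E_1,E_2,\dots$ and the linear equivalences among them and $-K_{\tilde X}$. Then, exactly as in the proofs above, I would produce a $\PP^1$-fibration $g\colon\tilde X\to\PP^1$ — or, for the larger values of $a$, a fibration $g\colon\bar X\to\PP^1$ on a single blow-up $h\colon\bar X\to\tilde X$ of the intersection point of two consecutive $D_i$'s (or of a $(-1)$-curve with a $D_i$) — list its reducible fibres explicitly, and exhibit an effective $\RR$-divisor $L\equiv\varphi^*(H)$ supported on the $D_i$, the relevant $(-1)$-curves and the fibre components, with a free parameter ($\epsilon$ small, or $x+y$ equal to a suitable constant with $x$ ranging over an interval depending on $a$) chosen so that all coefficients are positive. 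For such $L$ one has $\tilde X\setminus\Supp(L)\cong\FF_n\setminus(M+F_1+\dots+F_k)$ with $M$ the negative section and $F_1,\dots,F_k$ fibres ($n\in\{1,2\}$ according as an auxiliary blow-up was made), which is of the form $Z\times\AAA^1$; pushing $L$ forward, $X\setminus\Supp(\varphi_*L)$ is the required $H$-polar cylinder.

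The one genuinely delicate point is the sub-case in which $\hat E$ meets the $A_7$-chain and $a$ ranges over all of $(0,8)$: there one must, as in the $D_5+A_2$ and $D_5+2A_1$ cases of Lemma \ref{Lemm1}, split the interval into pieces and choose for each piece the appropriate auxiliary blow-up and fibre configuration. The remaining verifications — effectivity of the parametrised divisor $L$, and the identification of $\tilde X\setminus\Supp(L)$ with a Hirzebruch surface minus its negative section and some complete fibres — are routine but lengthy.
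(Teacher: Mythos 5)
Your structural reductions are sound, and in places more careful than the paper itself: the paper never discusses the possible extra $A_1$ point (which indeed forces $\rho(X)=1$ and is settled by Theorem \ref{theorem:cylinders:-K_S}), nor the exclusion of type $C(r_H)$, and your component count for a putative conic bundle together with the $E^2$ computation showing that $\hat E$ can only meet an end component of the chain are correct. So the case division you arrive at — $\hat E$ disjoint from the chain with $0<a<1$, or $\hat E$ meeting an end with $0<a<8$ — is exactly the paper's.

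The gap is that you stop where the proof actually begins. For this lemma, as for Lemmas \ref{Lem1} and \ref{Lemm1}, the whole mathematical content is the explicit exhibition of an effective $\RR$-divisor $L\equiv\varphi^*(H)$ whose support contains all seven $D_i$ (so the complement descends to $X$) together with a $\PP^1$-fibration whose singular fibres plus a section account for $\Supp(L)$; you produce neither, and the assertion that coefficients can be chosen positive "with a free parameter" is precisely what has to be proved — it is not routine over the whole range of $a$. Concretely, when $\hat E$ meets the chain the paper must split at $a=\frac12$: for $a>\frac12$ it uses the fibration on $\tilde X$ with section $D_4$ and fibres $2E_1+2D_2+D_1+D_3$ and $E_2+D_5+D_6+D_7+\tilde E$, while for $a\le\frac12$ the naive decomposition is not effective and one first rewrites $\varphi^*(E)$ using the linear equivalence $\tilde E+D_7+D_6+D_5+E_2\sim 2E_1+2D_2+D_1+D_3$, then blows up $D_3\cap D_4$ to obtain an $\FF_1$-structure with fibres $2\bar E_1+2\bar D_2+\bar D_1+\bar D_3+\bar E_3$ and $3\bar E_2+\bar D_4+3\bar D_5+2\bar D_6+\bar D_7$. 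You flag this subcase as "genuinely delicate" but leave it unresolved, and the disjoint case is likewise only promised (the paper settles it with the three-fibre fibration $2E_1+D_1+2D_2+D_3$, $2E_2+D_5+2D_6+D_7$, $E_3+\tilde E$). As written, the proposal is a correct plan following the paper's strategy, not a proof.
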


\begin{proof}
Let $\varphi\colon\tilde{X}\rightarrow X$ be the minimal resolution of singularities of $X$ and let $D=\sum_{i=1}^7 D_i$ be the exceptional divisor of $\varphi$.
Assume that $H=-K_X+a E$, where $E$ a $(-1)$-curve and $E$ does not contain $P$ and $a<1$. So, we have the following configuration on $\tilde{X}$.\\
\begin{tikzpicture}
\draw  (3.5,0.5) -- (8,0.5) (4,0) -- (4,4) (1.5,3)--(4.5,3) (2,2.5)--(2,4) (7.5,0)--(7.5,4) (7,3)--(10,3) (9.5,2.5)--(9.5,4); \draw [dashed] (3,2.5) -- (3,4) (8.5,2.5) -- (8.5,4) (6,0) --(6,3) (5.8,2)--(7.5,5);
\draw (8,0.5) node [right] {$D_4$} (2,2.5) node [below] {$D_1$} (4.5,3) node [right] {$D_2$} (4,2) node [right] {$D_3$} (3,2.5) node [below] {$E_1$} (9.5,2.5) node [below] {$D_7$} (10,3) node [right] {$D_6$}
(8.5,2.5) node [below] {$E_2$} (7.5,2) node [right] {$D_5$} (6,1.8) node [right] {$E_3$} (6.7,4) node [left] {$\tilde{E}$};
\end{tikzpicture}\\
where $\tilde{E}$ is the proper transform of $E$ on $\tilde{X}$, $\tilde{E},  E_1, E_2, E_3$ are $(-1)$-curves $D_1,\ldots,D_7$ are $(-2)$-curves. We have \begin{gather*}\varphi^*(H)\equiv L=
2D_4+(\frac{1}{2}+\epsilon)D_1+ (1+2\epsilon)D_2+(\frac{3}{2}+\epsilon)D_3+2\epsilon E_1+\\
+(\frac{1}{2}+\epsilon)D_7+ (1+2\epsilon)D_6+(\frac{3}{2}+\epsilon)D_5+2\epsilon E_2+(1-2\epsilon)E_3+(a-2\epsilon)\tilde{E}.\end{gather*}
So, $L$ is an effective divisor for a sufficiently small $\epsilon$. Note that there exists a $\PP^1$-fibration $g\colon\tilde{X}\rightarrow\PP^1$ with three singular fibers $$C_1=2E_1+D_1+2D_2+D_3,\quad C_2=2E_2+D_5+2D_6+D_7,\quad C_3=E_3+\tilde{E}.$$ Hence, $\tilde{X}\backslash\Supp(L)\cong\FF_2\backslash(M+F_1+F_2+F_3)$, where $M$ is a unique $(-2)$-curve and $F_1, F_2,F_3$ are fibers. So, $X$ has an $H$-polar cylinder.

Assume that $H=-K_X+a E$, where $E$ a $(-1)$-curve and $E$ passes through $P$ and $a<8$. So, we have the following configuration on $\tilde{X}$.\\
\begin{tikzpicture}
\draw  (3.5,0.5) -- (8,0.5) (4,0) -- (4,4) (1.5,3)--(4.5,3) (2,2.5)--(2,4) (7.5,0)--(7.5,4) (7,3)--(10,3) (9.5,2.5)--(9.5,4); \draw [dashed] (3,2.5) -- (3,4) (7,1.5) -- (9.5,1.5) (2,1) -- (4.5,1) (9,3.5)--(11,3.5);
\draw (8,0.5) node [right] {$D_4$} (2,2.5) node [below] {$D_1$} (4.5,3) node [right] {$D_2$} (4,2) node [right] {$D_3$} (3,2.5) node [below] {$E_1$} (9.5,2.5) node [below] {$D_7$} (10,3) node [right] {$D_6$}
(9.5,1.5) node [right] {$E_2$} (7.5,2) node [right] {$D_5$} (2,1) node [below] {$E_3$} (11,3.5) node [right] {$\tilde{E}$};
\end{tikzpicture}\\
where $\tilde{E}$ is the proper transform of $E$ on $\tilde{X}$, $\tilde{E},  E_1, E_2, E_3$ are $(-1)$-curves $D_1,\ldots,D_7$ are $(-2)$-curves. Assume that $a>\frac{1}{2}$. We have \begin{gather*}\varphi^*(H)\equiv L= (2+\frac{a}{2})D_4+(\frac{1}{2}+\frac{a}{8}+\epsilon)D_1+(1+\frac{a}{4}+2\epsilon)D_2+\\
+(\frac{3}{2}+\frac{3a}{8}+\epsilon)D_3+2\epsilon E_1+(\frac{5}{2}+\frac{5a}{8}-\epsilon)D_5+(\frac{3}{2}-\epsilon)E_2+\\
+(\frac{3}{2}+\frac{3a}{4}-\epsilon)D_6+(\frac{1}{2}+\frac{7a}{8}-\epsilon)D_7+(a-\frac{1}{2}-\epsilon)\tilde{E}.\end{gather*} Note that $L$ is effective for sufficiently small $\epsilon$. On the other hand, there exists a $\PP^1$-fibration $g\colon\tilde{X}\rightarrow\PP^1$ such that $D_4$ is a section and $g$ has two singular fibers $$C_1=2E_1+2D_2+D_1+D_3,\quad C_2=E_2+D_5+D_6+D_7+\tilde{E}.$$ Hence, $\tilde{X}\backslash\Supp(L)\cong\FF_2\backslash(M+F_1+F_2)$, where $M$ is a unique $(-2)$-curve and $F_1, F_2$ are fibers. So, $X$ has an $H$-polar cylinder.

Assume that $a\leq\frac{1}{2}$. Note that $$\tilde{E}+D_7+D_6+D_5+E_2\sim2E_1+2D_2+D_1+D_3.$$ Then \begin{gather*}\varphi^*(H)\equiv L= (\frac{2}{3}+\frac{9}{8}a-\epsilon)D_1+(\frac{4}{3}+\frac{9}{4}a-2\epsilon)D_2+(\frac{5}{3}+\frac{11}{8}a-\epsilon)D_3+\\
+(\frac{2}{3}-\epsilon)E_3+(\frac{1}{3}+2a-2\epsilon)E_1+(\frac{4}{3}+\frac{a}{2}+\epsilon)D_4+(\frac{1}{3}-\frac{a}{8}+\epsilon)D_7+\\
+(\frac{2}{3}-\frac{a}{4}+\epsilon)D_6+(1-\frac{3}{8}a+\epsilon)D_5+3\epsilon E_2.\end{gather*} Note that $L$ is effective for sufficiently small $\epsilon$. Let $h\colon\bar{X}\rightarrow\tilde{X}$ be that blow-up of intersection point of $D_3$ and $D_4$, and $B$ be an exceptional divisor. Put $\bar{E}_1,\bar{E}_2,\bar{E}_3,\bar{D}_1,\ldots,\bar{D}_7$ are the proper transform of $E_1,E_2,E_3,D_1,\ldots,D_7$ correspondingly. We see that there exists a $\PP^1$-fibration $g\colon\bar{X}\rightarrow\PP^1$ such that $B$ is a section and $g$ has two singular fibers $$C_1=2\bar{E}_1+2\bar{D}_2+\bar{D}_1+\bar{D}_3+\bar{E}_3,\quad C_2=3\bar{E}_2+\bar{D}_4+3\bar{D}_5+2\bar{D}_6+\bar{D}_7.$$ Hence, $\tilde{X}\backslash\Supp(L)\cong\FF_1\backslash(M+F_1+F_2)$, where $M$ is a unique $(-2)$-curve and $F_1, F_2$ are fibers. So, $X$ has an $H$-polar cylinder.
\end{proof}

\begin{lemma}
\label{LemA4}
Let $X$ be a del Pezzo surface with du Val singularities and let $H$ be an ample divisor on $X$. Assume that $X$ has a unique singular point $P$. Assume that $P$ is of type $A_4$. Then $X$ has an $H$-polar cylinder.
\end{lemma}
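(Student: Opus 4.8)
The plan is to argue exactly as in Lemmas~\ref{Lem1}--\ref{Lemm3}. Let $\varphi\colon\tilde X\to X$ be the minimal resolution; since $P$ is of type $\mathrm A_4$, the exceptional locus is a chain $D_1+D_2+D_3+D_4$ of $(-2)$-curves, and since $\deg X=1$ and $P$ is the only singular point we have $\rho(\tilde X)=9$ and hence $\rho(X)=5$. If $r_H=0$ then $-K_X\equiv\mu_H H$, so an $H$-polar cylinder exists if and only if a $(-K_X)$-polar one does; the latter exists by hypothesis (and by Theorem~\ref{theorem:cylinders:-K_S}, since an $\mathrm A_4$-surface of degree $1$ is not on the list in part~I). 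So I may assume $r_H\ge 1$ and that $H$ is of type $B(r_H)$ or $C(r_H)$, writing $K_X+\mu_H H$ as the corresponding effective combination $\sum a_iE_i$ (resp. $aB+\sum a_iE_i$) with $0<a_i<1$.

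I would then split into cases by the type of $H$, by $r_H$, and, within each, by whether the extremal curves $E_i$ (and, in the conic-bundle case, a chosen member of the pencil $B$) pass through $P$; which configurations actually occur is dictated by the classification of del Pezzo surfaces with du Val singularities of the relevant Picard number (as in the use of \cite{Fur,Ma,Ye} in Lemma~\ref{Lemm1}), and in particular $\varphi^*(E_i)$ is $\hat E_i$ plus the fractional tail of the $\mathrm A_4$ chain determined by which node $E_i$ meets. In each configuration I would draw the dual graph of the $(-1)$- and $(-2)$-curves on $\tilde X$, use the standard linear equivalences between the chain of $D_j$'s and the surrounding $(-1)$-curves to rewrite $\varphi^*(H)$ in a convenient basis, introduce either a small parameter $\epsilon$ or free parameters $x,y$ with $x+y$ prescribed (the device used in the higher-rank type-$B$ cases of Lemma~\ref{Lemm1}), and check that all coefficients become positive for suitable parameter values; this is where the strict inequalities $a_i<1$ are used. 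Finally I would exhibit a $\PP^1$-fibration $g\colon\tilde X\to\PP^1$ (after one blow-up of a node where a $(-1)$-curve meets a $(-2)$-curve, if necessary), list its singular fibers, and verify that $\Supp(L)$ is precisely the negative section plus whole fibers, so that $\tilde X\setminus\Supp(L)\cong\FF_n\setminus(M+F_1+\dots+F_k)$ is a cylinder; pushing forward along $\varphi$ gives the $H$-polar cylinder in $X$.

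The main obstacle is bookkeeping rather than any single difficulty: one must enumerate the genuinely different extremal configurations, and for each one (i) know from the classification which $(-1)$-curves through or away from $P$ occur, (ii) guess the right effective representative $L\equiv\varphi^*(H)$ together with the matching fibration, and (iii) check effectivity of $L$ --- a routine but delicate positivity computation. As in Lemmas~\ref{Lemm1} and~\ref{Lemm3}, the only structural variation is that when the relevant $(-1)$-curve meets a $(-2)$-curve transversally, blowing up that point replaces the target $\FF_2$ by $\FF_1$; otherwise the argument is uniform.
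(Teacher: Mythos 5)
Your outline coincides with the paper's strategy (minimal resolution, reduction to $r_H\ge 1$, case split by the Fujita face, explicit effective $L\equiv\varphi^*(H)$, and a $\PP^1$-fibration exhibiting the complement of $\Supp(L)$ as a cylinder), but as written it is a plan, not a proof: the entire content of this lemma lies in the enumeration of the extremal configurations and in the explicit constructions, and none of that is carried out. The paper's proof of this lemma runs through roughly eight families of classes, e.g.\ $H\equiv-K_X+\lambda E$ with $E$ away from or through $P$, $H\equiv-K_X+\sum_i\lambda_i\hat E_i$ with two, three or four $(-1)$-curves, and conic-bundle cases such as $H\equiv-K_X+\mu C+\lambda\varphi(E)+\lambda_1\hat E_1+\lambda_2\hat E_2$, each with its own configuration (Figures 1--7), its own linear equivalences, and its own fibration.

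Moreover, two of your guiding claims are inaccurate, and they hide exactly where the difficulty sits. First, effectivity of $L$ is not a routine consequence of the inequalities $a_i<1$: for a fixed configuration the candidate $L$ is effective only in a sub-range of the parameters, and outside that range the paper must switch to a genuinely different representative and fibration. For three $(-1)$-curves the first choice works only when $\lambda_1+\lambda_2-\lambda_3<1$; otherwise one rewrites $\varphi^*(H)$ using a $0$-curve $M$ through $D_1\cap D_2$, splits further into $\lambda_1>\lambda_2$ and $\lambda_1=\lambda_2$, and uses one or two extra blow-ups (Figures 3--5). Analogous thresholds occur with four curves ($\lambda_1+\lambda_2+\lambda_3-2\lambda_4\ge 1$) and in the conic-bundle case, where the boundary situation $\lambda+\lambda_1+\lambda_2+2\mu=1$ is handled by solving an explicit system of inequalities in $a,b$, split according to the two possible intersection patterns of $\tilde E_1,\tilde E_2$ with $E_1,\dots,E_5$. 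Second, the structural variation is more than one blow-up trading $\FF_2$ for $\FF_1$: in the very first case the paper performs a chain of blow-ups over $D_2\cap D_3$ producing $(-3)$- and $(-5)$-curves and a fibration with fibers such as $2\tilde E_1+3\tilde M+2\tilde D_3+\tilde D_4+3N_1+N_2$. Since your proposal neither lists the configurations (which requires knowing which $(-1)$-curves on $\tilde X$ meet which node $D_i$), nor produces the divisors $L$ and fibrations, nor verifies positivity in the delicate parameter ranges, it has a genuine gap: it asserts that the decisive computations can be done but does not do them, and its blanket positivity claim is false for a single fixed configuration.
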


\begin{proof}
Let $\varphi\colon\tilde{X}\rightarrow X$ be the minimal resolution of singularities of $X$ and let $D=\sum_{i=1}^4 D_i$ be the exceptional divisor of $\varphi$.
Assume that $H=-K_X+\lambda E$, where $E$ is a $(-1)$-curve and $\lambda<1$. So, we have the following configuration on $\tilde{X}$.\\
\begin{tikzpicture}
\draw  (0.5,0) -- (0.5,2) (0,0.5) -- (4,0.5) (3.5,0)--(3.5,5) (3,4.5)--(5,4.5); \draw [dashed] (2,0) -- (2,3) (3,1)--(5,1) (3,1.5)--(5,1.5) (3,2)--(5,2) (3,2.5)--(5,2.5);
\draw (1,0.5) node [above] {$D_3$} (0.5,2) node [above] {$D_4$} (3.5,5) node [above] {$D_2$}  (5,4.5) node [right] {$D_1$} (2,3) node [above] {$E_1$} (5,1) node [right] {$E_2$} (5,1.5) node [right] {$E_3$} (5,2) node [right] {$E_4$} (5,2.5) node [right] {$E_5$};
\end{tikzpicture}$$\text{Figure 1}.$$
where $D_1, D_2, D_3, D_4$ are $(-2)$-curves, $E_1, E_2, E_3, E_4, E_5$ are $(-1)$-curves. Moreover, $E_1\cdot\tilde{E}=E_2\cdot\tilde{E}=0$, $E_3\cdot\tilde{E}=E_4\cdot\tilde{E}=E_5\cdot\tilde{E}=1$, $M\cdot\tilde{E}=2$, where $\tilde{E}$ is the proper transform of $E$. We have
\begin{gather*}\varphi^*(H)\equiv L= bD_4+2bD_3+2aD_2+aD_1+(2b-1)E_1+(2a-1)E_2+\\
+(2a-1-\lambda)(E_3+E_4+E_5)+(3b-1-2\lambda)M,\end{gather*} where $a=\frac{1+\lambda}{2}+\epsilon$, $b=\frac{1+\lambda}{2}-\epsilon$, $M$ is a $(0)$-curve such that $M$ pass through the intersection point of $D_2$ and $D_3$, $\epsilon$ is a sufficiently small number. Moreover, $M$ does not meet $D_1, D_4, E_1, E_2,\ldots, E_5$. We can blow-up the intersection point of $D_2$ and $D_3$, after that blow-up the intersection point of exceptional divisor and the proper transform of $D_3$ and so on. We obtain a surface $Y$ with the following configuration\\
\begin{tikzpicture}
\draw  (3,0) -- (0,2) (0.5,1.5) -- (0.5,4) (0.2,3.2)--(2,5) (1,4.8)--(4,4.8) (6,0)--(8,3) (7.5,1.3)--(7.5,6) (8,5.5)--(5,6.5); \draw [dashed] (2,0.5) -- (7,0.5) (0,3)--(3,3) (1.8,3.2)--(0,5) (6.8,3.5)--(9,3.5) (6.8,4)--(9,4) (6.8,4.5)--(9,4.5) (6.8,5)--(9,5);
\draw (7.5,1.3) node [right] {$\tilde{D}_2$} (2,5) node [above] {$\tilde{D}_3$} (3.5,4.8) node [above] {$\tilde{D}_4$}  (5,6.5) node [below] {$\tilde{D}_1$} (0,5) node [above] {$\tilde{E}_1$} (9,3.5) node [right] {$\tilde{E}_2$} (9,4) node [right] {$\tilde{E}_3$} (9,4.5) node [right] {$\tilde{E}_4$} (9,5) node [right] {$\tilde{E}_5$} (3,3) node [right] {$\tilde{M}$} (7,0.5) node [right] {$F$} (0.5,2.2) node [right] {$N_1$} (1,1.5) node [right] {$N_2$} (6.8,1.1) node [left] {$N_3$};
\end{tikzpicture}\\
where curves $\tilde{D}_1, \tilde{D}_2, \tilde{D}_3, \tilde{D}_4$ are proper transform of $D_1, D_2, D_3, D_4$, curves $\tilde{E}_1, \tilde{E}_2, \tilde{E}_3, \tilde{E}_4, \tilde{E}_5$ are proper transform of $E_1, E_2, E_3, E_4, E_5$, the curve $\tilde{M}$ is the proper transform of $M$, and $N_1, N_2, N_3$ and $F$ are exceptional curves. Moreover, $\tilde{E}_1, \tilde{E}_2, \tilde{E}_3, \tilde{E}_4, \tilde{E}_5, \tilde{M}$ and $F$ are $(-1)$-curves, $\tilde{D}_1, \tilde{D}_4, N_1, N_3$ are $(-2)$-curves, $\tilde{D}_2, N_2$ are $(-3)$-curves, $\tilde{D}_3$ is a $(-5)$-curve. So, there exists $\PP^1$-fibration $h\colon Y\rightarrow\PP^1$ such that $h$ has only two singular fibers $$C_1=2\tilde{E}_1+3\tilde{M}+2\tilde{D}_3+\tilde{D}_4+3N_1+N_2,$$ $$C_2=2(\tilde{E}_2+\tilde{E}_3+\tilde{E}_4+\tilde{E}_5+\tilde{D}_2)+\tilde{D}_1+N_3.$$
Note that $\tilde{X}\backslash\Supp(L)\cong Y\backslash\Supp(C_1+C_2+F)\cong\FF_1\backslash(K+F_1+F_2)$, where $K$ is a unique $(-1)$-curve and $F_1, F_2$ are fibers. So, $X$ has an $H$-polar cylinder.

Assume that $H=-K_X+\lambda_1 \hat{E}_1+\lambda_2 \hat{E}_2$, where $\hat{E}_1$, $\hat{E}_2$ are $(-1)$-curves and $\lambda_1<1$, $\lambda_2<1$. We may assume that $\lambda_1\geq\lambda_2$. Assume that $\lambda_1+\lambda_2\geq 1$. Then we have the following configuration on $\tilde{X}$.\\
\begin{tikzpicture}
\draw  (0.5,0) -- (0.5,5) (0,0.5) -- (6,0.5) (5.5,0)--(5.5,5) (5,4.5)--(10,4.5); \draw [dashed] (4,0) -- (4,3) (3,0)--(3,3) (5,1.5)--(7,1.5) (5,2)--(7,2) (5,2.5)--(7,2.5) (9,5)--(9,2);
\draw (1,0.5) node [above] {$D_3$} (0.5,5) node [above] {$D_4$} (5.5,5) node [above] {$D_2$}  (10,4.5) node [right] {$D_1$} (4,3) node [above] {$E_2$} (3,3) node [above] {$E_1$} (7,1.5) node [right] {$E_3$} (7,2) node [right] {$E_4$} (7,2.5) node [right] {$E_5$} (9,2) node [right] {$E_6$};
\end{tikzpicture}$$\text{Figure 2}.$$
where $D_1, D_2, D_3, D_4$ are $(-2)$-curves, $E_1, E_2, E_3, E_4, E_5, E_6$ are $(-1)$-curves. We have
\begin{gather*}\varphi^*(H)\equiv L=aD_4+2aD_3+(b+1)D_2+bD_1+(2a-1-\lambda_1)E_1+\\
+(2a-1-\lambda_2)E_2+bE_3+(b-\lambda_1-\lambda_2)(E_4+E_5)+(b-1)E_6,\end{gather*}
where $a+b=1+\lambda_1+\lambda_2$. We see that $L$ is an effective divisor if $b>\lambda_1+\lambda_2$ and $a>\frac{1}{2}+\frac{\lambda_1}{2}$. Since $\lambda_1<1$, we see that we can choose $a$ and $b$ such that $L$ is an effective divisor. Let $h_1\colon Y_1\rightarrow\tilde{X}$ be the blow-up of intersection point of $D_2$ and $D_3$, and $F_1$ is the exceptional divisor. Let $h_2\colon Y_2\rightarrow Y_1$ be the blow-up of intersection point of $N_1$ and $D'_2$, where $D'_2$ is the proper transform of $D_2$. Let $N_2$ be the exceptional divisor of $h_2$. Note that there exists a $\PP^1$-fibration $g\colon Y_2\rightarrow\PP^1$ such that $N_2$ is a section of $g$ and $g$ has two singular fibers $$C_1=2\bar{E}_1+2\bar{E}_2+2\bar{D}_3+\bar{D}_4+N_1,\quad C_2=\bar{E}_3+\bar{E}_4+\bar{E}_5+\bar{D}_2+\bar{D}_1+\bar{E}_6,$$ where $\bar{E}_1,\ldots,\bar{E}_6,\bar{D}_1,\ldots,\bar{D}_4$ are the proper transform of $$E_1,\ldots, E_6,D_1,\ldots, D_4$$ correspondingly. So, $\tilde{X}\backslash\Supp(L)\cong\FF_1\backslash(K+F_1+F_2)$, where $K$ is a unique $(-1)$-curve and $F_1, F_2$ are fibers. So, $X$ has an $H$-polar cylinder.

Assume that $\lambda_1+\lambda_2<1$. Then we have the same configuration as in Figure 1. So, \begin{gather*}\varphi^*(H)\equiv L= bD_4+2bD_3+2aD_2+aD_1+(2b-1)E_1+(2a-1-\lambda_1)E_2+\\
+(2a-1-\lambda_2)E_3+(2a-1-\lambda_1-\lambda_2)(E_4+E_5)+\\
+(3b-1-2(\lambda_1+\lambda_2))M,\end{gather*} where $a=\frac{1+\lambda_1+\lambda_2}{2}+\epsilon$, $b=\frac{1+\lambda_1+\lambda_2}{2}-\epsilon$, $M$ is a $(0)$-curve such that $M$ pass through the intersection point of $D_2$ and $D_3$ and $\epsilon$ is a sufficiently small number. As above, we see that $X$ has an $H$-polar cylinder.

Assume that $H=-K_X+\lambda_1 \hat{E}_1+\lambda_2 \hat{E}_2+\lambda_3 \hat{E}_3$, where $\hat{E}_1$, $\hat{E}_2$, $\hat{E}_3$ are $(-1)$-curves and $\lambda_1<1$, $\lambda_2<1$, $\lambda_3<1$. We may assume that $\lambda_1\geq\lambda_2\geq\lambda_3$. We have configuration as in Figure 2. Moreover, $E_1\cdot\hat{E}_i=E_6\cdot\hat{E}_i=0$, $E_2\cdot\hat{E}_i=1$ for every $i$, $E_3\cdot\hat{E}_1=0$, $E_3\cdot\hat{E}_2=E_3\cdot\hat{E}_3=1$, $E_4\cdot\hat{E}_2=0$, $E_4\cdot\hat{E}_1=E_4\cdot\hat{E}_3=1$, $E_5\cdot\hat{E}_3=0$, $E_5\cdot\hat{E}_1=E_5\cdot\hat{E}_2=1$. Then
\begin{gather*}\varphi^*(H)\equiv L=aD_4+2aD_3+(b+1)D_2+bD_1+(2a-1)E_1+\\
+(2a-1-\lambda_1-\lambda_2-\lambda_3)E_2+(b-\lambda_2-\lambda_3)E_3+(b-\lambda_1-\lambda_3)E_4+\\
+(b-\lambda_1-\lambda_2)E_5+(b-1)E_6,\end{gather*} where $a+b=1+\lambda_1+\lambda_2+\lambda_3$. Note that if $2a-1-\lambda_1-\lambda_2-\lambda_3>0$ and $b-\lambda_1-\lambda_2>0$ then $L$ is an effective divisor. So, if $\lambda_1+\lambda_2-\lambda_3<1$ then $X$ has an $H$-polar cylinder. Assume that $\lambda_1>\lambda_2$. We have the following configuration on $\tilde{X}$.\\
\begin{tikzpicture}
\draw  (0,0.5) -- (10,0.5) (8.5,0) -- (10.5,2) (10,1)--(10,5) (10.5,4)--(8.5,6); \draw [dashed] (2,0) -- (0,3) (0.5,2)--(0.5,5) (4,0) -- (2,3) (2.5,2)--(2.5,5) (6,0) -- (4,3) (4.5,2)--(4.5,5) (8,0) -- (6,3) (6.5,2)--(6.5,5) (9,3)--(12,3) (9.8,0.7)--(9,2);
\draw (1.5,1.5) node [above] {$E_1$} (0.5,5) node [above] {$\hat{E}_1$} (3.5,1.5) node [above] {$E_2$} (2.5,5) node [above] {$\hat{E}_2$} (5.5,1.5) node [above] {$E_3$} (4.5,5) node [above] {$\hat{E}_3$} (7.5,1.5) node [above] {$E_4$} (6.5,5) node [above] {$\hat{E}_4$} (12,3) node [right] {$E_5$} (0,0.5) node [above] {$D_1$} (10.5,2) node [right] {$D_2$} (10,5) node [above] {$D_3$} (8.5,6) node [above] {$D_4$} (9,2) node [left] {$E_6$} ;
\end{tikzpicture}$$\text{Figure 3}.$$ where dotted lines are $(-1)$-curves and $D_1, D_2,D_3,D_4$ are $(-2)$-curves. We see that $\hat{E}_i\equiv-E_i+D_2+2D_3+D_4+2E_5$.
\begin{gather*}\varphi^*(H)\equiv L=aD_1+(a-1-\lambda_2)E_2+
(a-1-\lambda_3)E_3+(a-1)E_4+cM+\\+(c+\lambda_1-1)\hat{E}_1+(b+\lambda_2+\lambda_3)D_2+(b-1)E_6+(b+\lambda_2+\lambda_3-1)D_4+\\
+(2b+2\lambda_2+2\lambda_3-2)D_3+(2b+2\lambda_2+2\lambda_3-3)E_5,\end{gather*} where $M$ is a $(0)$-curve that passes through intersection point of $D_1$ and $D_2$, $a+b+c=3$. Put $a=1+\lambda_2+\epsilon$, $c=1-\lambda_1+\epsilon$, $b=1+\lambda_1-\lambda_2-2\epsilon$. Note that $\lambda_1>\frac{1}{2}$. We see that $L$ is an effective divisor for sufficiently small $\epsilon$.
Let $h_1\colon Y_1\rightarrow\tilde{X}$ be the blow-up of the intersection point of $D_1$ and $D_2$ and $F$ is an exceptional divisor of $h_1$. We have the following configuration on $Y_1$.\\
\begin{tikzpicture}
\draw  (0.5,0) -- (0.5,5) (7.5,0) -- (9.5,2) (9,1)--(9,5) (9.5,4)--(7.5,6); \draw [dashed] (0,0.5) -- (10,0.5) (0,2) -- (3,2) (0,3) -- (3,3) (0,4) -- (3,4) (4.5,0)--(4.5,3) (4,2) -- (6,3) (8.8,0.7)--(8,2) (8,3)--(11,3);
\draw (0.5,5) node [above] {$\tilde{D}_1$} (3,2) node [right] {$\tilde{E}_2$} (3,3) node [right] {$\tilde{E}_3$} (3,4) node [right] {$\tilde{E}_4$} (6,3) node [right] {$\hat{E}'_1$}  (4.5,1.5) node [right] {$M'$} (11,3) node [right] {$\tilde{E}_5$}  (9.5,2) node [right] {$\tilde{D}_2$} (9,5) node [above] {$\tilde{D}_3$} (7.5,6) node [above] {$\tilde{D}_4$} (8,2) node [left] {$\tilde{E}_6$} (10,0.5) node [right] {$F$};
\end{tikzpicture}$$\text{Figure 4}.$$
where $\tilde{D}_1, \tilde{D}_2, \tilde{D}_3, \tilde{D}_4$ are the proper transforms of $D_1,D_2,D_3,D_4$, the curves $\tilde{E}_2,\tilde{E}_3,\tilde{E}_4,\tilde{E}_5,\tilde{E}_6$ are the proper transforms of $E_2,E_3,E_4,E_5,E_6$, $\hat{E}'_1, \hat{E}'_2, \hat{E}'_3, \hat{E}'_4$ are the proper transforms of $\hat{E}_1, \hat{E}_2, \hat{E}_3, \hat{E}_4$, $M'$ is the proper transform of $M$. Note that there exists a $\PP^1$-fibration $g\colon Y_1\rightarrow\PP^1$ such that $g$ has three singular fibers $$C_1=\tilde{D}_1+\tilde{E}_2+\tilde{E}_3+\tilde{E}_4,\quad C_2=\tilde{D}_2+\tilde{D}_3+\tilde{D}_4+\tilde{E}_5+\tilde{E}_6,\quad C_3=M'+\hat{E}'_1.$$ Then $\tilde{X}\backslash\Supp(L)\cong\FF_1\backslash(K+F_1+F_2+F_3)$, where $K$ is a unique $(-1)$-curve and $F_1, F_2, F_3$ are fibers. Then $X$ has an $H$-polar cylinder. So, $\lambda_1=\lambda_2>\frac{1}{2}$. We have \begin{gather*}\varphi^*(H)\equiv L=aD_1+(a-1-\lambda_1)E_1+(a-1-\lambda_2)E_2+(a-1-\lambda_3)E_3+\\
+(a-1)E_4+bM+(b+\lambda_1+\lambda_2+\lambda_3)D_2+(b-1+\lambda_1+\lambda_2+\lambda_3)D_4+\\
+(2b-2+2\lambda_1+2\lambda_2+2\lambda_3)D_3+(2b-3+2\lambda_1+2\lambda_2+2\lambda_3)E_5,\end{gather*}
where $a+b=2$. Put $a=1+\lambda_1+\epsilon$, $b=1-\lambda_1-\epsilon$. Since $\lambda_1=\lambda_2>\frac{1}{2}$, we see that $L$ is an effective divisor for sufficiently small $\epsilon$.

Let $h_2\colon Y_2\rightarrow Y_1$ be the blow-up of the intersection point of $\tilde{D}_1$ and $F$ and $F'$ is an exceptional divisor of $h_2$. We have the following configuration on $Y_2$.\\
\begin{tikzpicture}
\draw  (0.5,0) -- (0.5,5) (7.5,0) -- (9.5,2) (9,1)--(9,5) (9.5,4)--(7.5,6) (6,5.5)--(8.5,5.5); \draw [dashed] (0,0.5) -- (10,0.5) (0,1) -- (3,1) (0,2) -- (3,2) (0,3) -- (3,3) (0,4) -- (3,4) (8.8,0.7)--(8,2) (6.5,4)--(8.7,5.3);
\draw (0.5,5) node [above] {$D'_1$} (3,1) node [right] {$E'_1$} (3,2) node [right] {$E'_2$} (3,3) node [right] {$E'_3$} (3,4) node [right] {$E'_4$}   (9.5,2) node [right] {$\tilde{F}$} (9,5) node [above] {$D'_2$} (7.5,6) node [above] {$D'_3$} (8,2) node [left] {$M''$} (6.5,4) node [left] {$E'_5$} (6,5.5) node [left] {$D'_4$} (10,0.5) node [right] {$F'$};
\end{tikzpicture}$$\text{Figure 5}.$$
where $D'_1, D'_2, D'_3, D'_4$ are the proper transforms of $D_1,D_2,D_3,D_4$, the curves $E'_1,E'_2,E'_3,E'_4,E'_5$ are the proper transforms of $E_1,E_2,E_3,E_4,E_5$, $M''$ is the proper transform of $M$, $\tilde{F}$ is the proper transform of $F$. Put $\hat{E}''_1$, $\hat{E}''_2$ $\hat{E}''_3$ are the proper transforms of $\hat{E}_1,\hat{E}_2,\hat{E}_3$. Note that there exists a $\PP^1$-fibration $g\colon Y_2\rightarrow\PP^1$ such that $F'$ is a section of $g$ and $g$ has two singular fibers $$C_1=D'_1+E'_1+E'_2+E'_3+E'_4,\quad C_2=2E'_5+2D'_3+D'_4+D'_2+\tilde{F}+M''.$$ So, $\tilde{X}\backslash\Supp(L)\cong\FF_1\backslash(K+F_1+F_2)$, where $K$ is a unique $(-1)$-curve and $F_1, F_2$ are fibers. Then $X$ has an $H$-polar cylinder.

Assume that $H=-K_X+\lambda_1 \hat{E}_1+\lambda_2 \hat{E}_2+\lambda_3 \hat{E}_3+\lambda_4 \hat{E}_4$, where $\hat{E}_1$, $\hat{E}_2$, $\hat{E}_3$, $\hat{E}_4$ are $(-1)$-curves and $\lambda_1<1$, $\lambda_2<1$, $\lambda_3<1$, $\lambda_4<1$. We may assume that $\lambda_1\geq\lambda_2\geq\lambda_3\geq\lambda_4$.  We have configuration as in Figure 1. Moreover, $E_1\cdot\hat{E}_i=0$ for every $i$, $E_j\cdot\hat{E}_i=1$ for $j=2,3,4,5$ and $j\neq i+1$, $E_j\cdot\hat{E}_{j+1}=0$ for $j=2,3,4,5$. We have
\begin{gather*}\varphi^*(H)\equiv L= bD_1+2bD_2+2aD_3+aD_4+(2b-1)E_1+\\
+(2a-1-\lambda_2-\lambda_3-\lambda_4)E_2+
+(2a-1-\lambda_1-\lambda_3-\lambda_4)E_3+\\
+(2a-1-\lambda_1-\lambda_2-\lambda_4)E_4+(2a-1-\lambda_1-\lambda_2-\lambda_3)E_5+\\
+(3b-1-2\lambda_1-2\lambda_2-2\lambda_3-2\lambda_4)M,\end{gather*} where $a+b=1+\lambda_1+\lambda_2+\lambda_3+\lambda_4$, $M$ is a $(0)$-curve such that $M$ pass through the intersection point of $D_2$ and $D_3$. As above, we see that if $\lambda_1+\lambda_2+\lambda_3-2\lambda_4<1$ then there exist $a$ and $b$ such that $L$ is an effective divisor. Then $X$ has an $H$-polar cylinder. So, we may assume that $\lambda_1+\lambda_2+\lambda_3-2\lambda_4\geq 1$. Assume that $\lambda_1>\lambda_2$. Then \begin{gather*}\varphi^*(H)\equiv L=aD_1+(a-1-\lambda_2)E_2+\\+
(a-1-\lambda_3)E_3+(a-1-\lambda_4)E_4+cM+(c+\lambda_1-1)\hat{E}_1+\\
+(b+\lambda_2+\lambda_3+\lambda_4)D_2+(b-1)E_6+(b+\lambda_2+\lambda_3+\lambda_4-1)D_4+\\
+(2b+2\lambda_2+2\lambda_3+2\lambda_4-2)D_3+(2b+2\lambda_2+2\lambda_3+2\lambda_4-3)E_5,\end{gather*} where $M$ is $(0)$-curve that passes through intersection point of $D_1$ and $D_2$. As above, put $a=1+\lambda_2+\epsilon$, $c=1-\lambda_1+\epsilon$, $b=1+\lambda_1-\lambda_2$. Note that $\lambda_1+\lambda_3+\lambda_4>\frac{1}{2}$. We see that $L$ is an effective divisor for sufficiently small $\epsilon$ and $\tilde{X}\backslash\Supp(L)\cong\FF_1\backslash(K+F_1+F_2+F_3)$, where $K$ is a unique $(-1)$-curve and $F_1, F_2, F_3$ are fibers. Then $X$ has an $H$-polar cylinder. So, $\lambda_1=\lambda_2$. We have
\begin{gather*}\varphi^*(H)\equiv L=aD_1+(a-1-\lambda_1)E_1+(a-1-\lambda_2)E_2+(a-1-\lambda_3)E_3+\\
+(a-1-\lambda_4)E_4+bM+(b+\lambda_1+\lambda_2+\lambda_3+\lambda_4)D_2+\\
+(b-1+\lambda_1+\lambda_2+\lambda_3+\lambda_4)D_4+(2b-2+2\lambda_1+2\lambda_2+2\lambda_3+2\lambda_4)D_3+\\
+(2b-3+2\lambda_1+2\lambda_2+2\lambda_3+2\lambda_4)E_5,\end{gather*} where $a+b=2$. Put $a=1+\lambda_1+\epsilon$, $b=1-\lambda_1-\epsilon$. Since $\lambda_1=\lambda_2$ and $\lambda_1+\lambda_2+\lambda_3-2\lambda_4\geq 1$ we see that $\lambda_2+\lambda_3+\lambda_4>\frac{1}{2}$. So, $L$ is an effective divisor for sufficiently small $\epsilon$ and $\tilde{X}\backslash\Supp(L)\cong\FF_1\backslash(K+F_1+F_2)$, where $K$ is a unique $(-1)$-curve and $F_1, F_2$ are fibers. Then $X$ has an $H$-polar cylinder.

Assume that $H=-K_X+\lambda\varphi(E)$, where $0<\lambda<5$, $E$ is a $(-1)$-curve on $\tilde{X}$ such that $E\cdot D_1=1$ and $E\cdot D_2=E\cdot D_3=E\cdot D_4=0$. We have $$\varphi^*(H)=-K_{\tilde{X}}+\lambda E+\frac{4}{5}\lambda D_1+\frac{3}{5}\lambda D_2+\frac{2}{5}\lambda D_3+\frac{1}{5}\lambda D_4.$$ Assume that $\lambda<1$. We have the same configuration as in Figure 2 with $E=E_6$. We obtain \begin{gather*}\varphi^*(H)\equiv L= bD_1+2bD_2+2aD_3+aD_4+(2b-1-\frac{3}{5}\lambda)E_3+\\
+(2b-1-\frac{3}{5}\lambda)E_4+(2b-1-\frac{3}{5}\lambda)E_5+(2a-1-\frac{2}{5}\lambda)E_1+\\
+(2a-1-\frac{2}{5}\lambda)E_2+(a-\frac{6}{5}\lambda)M,\end{gather*} where $a+b=1+\lambda$, $M$ is a $(0)$-curve such that $M$ pass through the intersection point of $D_2$ and $D_3$. Since $\lambda<1$, we see that there exist $a$ and $b$ such that $L$ is an effective divisor. Then $X$ has an $H$-polar cylinder.  So, $\lambda\geq 1$. We obtain \begin{gather*}\varphi^*(H)\equiv L= bD_1+(b-1+\frac{\lambda}{5})E+(b+1-\frac{\lambda}{5})D_2+2aD_3+aD_4+\\
+(b-\frac{4}{5}\lambda)E_3+(b-\frac{4}{5}\lambda)E_4+(b-\frac{4}{5}\lambda)E_5+\\
+(2a-1-\frac{2}{5}\lambda)E_1+(2a-1-\frac{2}{5}\lambda)E_2,\end{gather*}
where $a+b=1+\lambda$. Since $\lambda>\frac{1}{2}$, we see that there exist $a$ and $b$ such that $L$ is an effective divisor. As above, $X$ has an $H$-polar cylinder.

Assume that $H=-K_X+\lambda\varphi(E)+\lambda_1 \hat{E}_1+\lambda_2 \hat{E}_2+\lambda_3 \hat{E}_3$, where $0<\lambda<5$, $0\leq\lambda_3\leq\lambda_2\leq\lambda_1<1$, $E$ is a $(-1)$-curve on $\tilde{X}$ such that $E\cdot D_1=1$ and $E\cdot D_2=E\cdot D_3=E\cdot D_4=0$, $\hat{E}_1, \hat{E}_2, \hat{E}_3$ are $(-1)$-curves on $X$. Let $\tilde{E}_1$, $\tilde{E}_2$, $\tilde{E}_3$ be the proper transforms of $\hat{E}_1$, $\hat{E}_2$, $\hat{E}_3$.
Assume that $\lambda_1+\lambda_2-\lambda_3<1$. As above, we consider configurations on Figure 2, and we may assume that $E=E_6$. Then $X$ has as an $H$-polar cylinder. So, we may assume that $\lambda_1+\lambda_2-\lambda_3\geq 1$. Assume that $\lambda_1>\frac{1}{2}$ or $\lambda_2+\lambda_3>\frac{1}{2}$. As above, we consider configurations on Figure 4 and Figure 5. We may assume that the proper transform of $E$ is $\tilde{E}_4$ for Figure 4 and $E'_4$ for Figure 4. As above, $X$ has an $H$-polar cylinder. So, we may assume that $\lambda_1\leq\frac{1}{2}$, $\lambda_2+\lambda_3\leq\frac{1}{2}$ and $\lambda_1+\lambda_2-\lambda_3\geq 1$. Hence, $\lambda_1=\lambda_2=\frac{1}{2}$ and $\lambda_3=0$. We have the following configuration on $\tilde{X}$.\\
\begin{tikzpicture}
\draw  (0.5,0) -- (0.5,5) (0,0.5) -- (6,0.5) (5.5,0)--(5.5,5) (5.2,4.5)--(10,4.5); \draw [dashed] (0,4)--(2,4) (0,3) -- (2,3) (4,0) -- (4,3) (3.8,2)--(5,4) (3,0)--(3,3) (2.8,2)--(4,4) (5,1.5)--(7,1.5) (9,5)--(9,2);
\draw (1,0.5) node [above] {$D_3$} (0.5,5) node [above] {$D_4$} (5.5,5) node [above] {$D_2$}  (10,4.5) node [right] {$D_1$} (4,3) node [above] {$E_2$} (3,3) node [above] {$E_1$} (7,1.5) node [right] {$E_3$} (2,3) node [right] {$E_4$} (2,4) node [right] {$E_5$} (9,2) node [right] {$E$} (5,4) node [above] {$\tilde{E}_2$} (4,4) node [above] {$\tilde{E}_1$};
\end{tikzpicture}$$\text{Figure 6}$$
where dotted lines are $(-1)$-curves, $D_1, D_2,D_3,D_4$ are $(-2)$-curves, $\tilde{E}_1$, $\tilde{E}_2$ are the proper transform of $\hat{E}_1$, $\hat{E}_2$. We obtain \begin{gather*}\varphi^*(H)\equiv L=(2+\frac{2}{5}\lambda)D_3+(1+\epsilon+\frac{1}{5}\lambda)D_4+\epsilon E_4+\epsilon E_5+\\
+(\frac{1}{2}+\epsilon)E_1+\epsilon\tilde{E}_1+(\frac{1}{2}+\epsilon)E_2+\epsilon\tilde{E}_2+\\
+(2-3\epsilon+\frac{3}{5}\lambda)D_2+(1-3\epsilon+\frac{4}{5}\lambda)D_1+(1-3\epsilon)E_3+(\lambda-3\epsilon)E.\end{gather*} We see that for sufficiently small $\epsilon$, $L$ is an effective divisor. Note that there exists a $\PP^1$-fibration $g\colon\tilde{X}\rightarrow\PP^1$ with four singular fibers $$C_1=D_4+E_4+E_5,\quad C_2=D_2+D_1+E_3+E,$$ $$C_3=E_1+\tilde{E}_1,\quad C_4=E_4+\tilde{E}_4.$$ Hence, $\tilde{X}\backslash\Supp(L)\cong\FF_2\backslash(M+F_1+F_2+F_3+F_4)$, where $M$ is a unique $(-2)$-curve and $F_1, F_2,F_3,F_4$ are fibers. So, $X$ has an $H$-polar cylinder.

Assume that $H=-K_X+\mu C+\lambda\varphi(E)+\lambda_1\hat{E}_1+\lambda_2\hat{E}_2$, where $C$ is a $(0)$-curve and $\mu>0$, $0\leq\lambda<5$, $0\leq\lambda_2\leq\lambda_1<1$, $E$ is a $(-1)$-curve on $\tilde{X}$ such that $E\cdot D_1=1$ and $E\cdot D_2=E\cdot D_3=E\cdot D_4=0$, $\hat{E}_1, \hat{E}_2$ are $(-1)$-curves on $X$. Let $\tilde{E}_1$, $\tilde{E}_2$ be the proper transforms of $\hat{E}_1$, $\hat{E}_2$. Note that $\tilde{C}\sim E+D_1+D_2+D_3+D_4+E'$, where $E'$ is a $(-1)$-curve such that $E'\cdot D_4=1$ and $E'\cdot D_1=E'\cdot D_2=E'\cdot D_3=0$. Assume that $\lambda_1=\lambda_2=0$. We have the following configuration on $\tilde{X}$.\\
\begin{tikzpicture}
\draw  (0.5,0) -- (0.5,5) (0,0.5) -- (6,0.5) (5.5,0)--(5.5,5) (5.2,4.5)--(10,4.5); \draw [dashed] (0,4)--(2,4) (4,0) -- (4,3)  (3,0)--(3,3)  (5,1.5)--(7,1.5) (5,2)--(7,2) (9,5)--(9,2);
\draw (1,0.5) node [above] {$D_3$} (0.5,5) node [above] {$D_4$} (5.5,5) node [above] {$D_2$}  (10,4.5) node [right] {$D_1$} (4,3) node [above] {$E_3$} (3,3) node [above] {$E_4$} (7,1.5) node [right] {$E_1$} (7,2) node [right] {$E_2$}  (2,4) node [right] {$E'$} (9,2) node [right] {$E$};
\end{tikzpicture}$$\text{Figure 7}$$ where dotted lines are $(-1)$-curves, $D_1,D_2,D_3,D_4$ are $(-2)$-curves. Then \begin{gather*}\varphi^*(H)\equiv L=(a+\frac{2}{5}\lambda+\mu)D_3+(a-1+\frac{1}{5}\lambda+\mu)D_4+\\
+(a-1)E_3+(a-1)E_4+(a-2+\mu)E'+(b+\frac{3}{5}\lambda+\mu)D_2\\
+(b-1)E_1+(b-1)E_2+(b-1+\frac{4}{5}\lambda+\mu)D_1+(b-2+\lambda+\mu)E,\end{gather*}
where $a+b=3$. We see that if $\lambda+2\mu>1$ then there exist $a$ and $b$ such that $L$ is an effective divisor. Let $h\colon\bar{X}\rightarrow\tilde{X}$ be the blow-ups the point of intersection $D_2$ and $D_3$ and let $\bar{E}_1,\ldots, \bar{E}_4,\bar{E},\bar{E}' \bar{D}_1,\ldots,\bar{D}_4$ be the proper transform of $E_1,\ldots,E_4,E, E',D_1,\ldots,D_4$ correspondingly. Note that there exists a $\PP^1$-fibration $g\colon\bar{X}\rightarrow\PP^1$ with two singular fibers $$C_1=\bar{E}'+\bar{D}_4+\bar{D}_3+\bar{E}_3+\bar{E}_4,\quad C_2=\bar{E}+\bar{D}_1+\bar{D}_2+\bar{E}_1+\bar{E}_2.$$ Hence, $\tilde{X}\backslash\Supp(L)\cong\FF_1\backslash(M+F_1+F_2)$, where $M$ is a unique $(-1)$-curve and $F_1, F_2$ are fibers. So, $X$ has an $H$-polar cylinder.
So, we may assume that $\lambda+2\mu\leq 1$. As above, we have the configuration as in Figure 2. We obtain \begin{gather*}\varphi^*(H)\equiv L= bD_1+2bD_2+2aD_3+aD_4+(2b-1-\frac{3}{5}\lambda-2\mu)E_1+\\
+(2b-1-\frac{3}{5}\lambda-\mu)E_2+(2b-1-\frac{3}{5}\lambda-\mu)E_3+(2a-1-\frac{2}{5}\lambda-\mu)E_4+\\
+(2a-1-\frac{2}{5}\lambda-\mu)E_5+(a-\frac{6}{5}\lambda-2\mu)M,\end{gather*} where $a+b=1+2\mu+\lambda$, $M$ is a $(0)$-curve such that $M$ pass through the intersection point of $D_2$ and $D_3$. We see that if $\lambda+2\mu<1$ then there exist $a$ and $b$ such that $L$ is an effective divisor. Then $X$ has an $H$-polar cylinder. So, we may assume that $\lambda+2\mu = 1$. As above, we have the configuration as in Figure 2. We obtain \begin{gather*}\varphi^*(H)\equiv L= bD_1+(b+1-\frac{1}{5}\lambda)D_2+2aD_3+aD_4+(b-1+\frac{1}{5}\lambda)E+\\
+(b-\frac{4}{5}\lambda-2\mu)E_3+(b-\frac{4}{5}\lambda-\mu)E_4+(b-\frac{4}{5}\lambda-\mu)E_5+\\
+(2a-1-\frac{2}{5}\lambda-\mu)E_1+(2a-1-\frac{2}{5}\lambda-\mu)E_2,\end{gather*} where $a+b=1+2\mu+\lambda$. We see that if $\lambda+2\mu=1$ then there exist $a$ and $b$ such that $L$ is an effective divisor. Then $X$ has an $H$-polar cylinder. So, we may assume that $\lambda_1>0$. We have configuration as in Figure 6. So, if $\lambda_1+\lambda_2\geq 1$, then $X$ has an $H$-polar cylinder. Assume that $\lambda_1+\lambda_2<1$. We have configuration as in Figure 7. Note that there exist two cases. The first case is when $\tilde{E}_1\cdot E_1=\tilde{E}_2\cdot E_1=1$, $\tilde{E}_1\cdot E_2=\tilde{E}_2\cdot E_2=0$, $\tilde{E}_1\cdot E_3=\tilde{E}_2\cdot E_4=1$, $\tilde{E}_1\cdot E_4=\hat{E}_2\cdot E_3=0$. Then \begin{gather*}\varphi^*(H)\equiv L=a\tilde{D}_4+(a-1-\frac{1}{5}\lambda)E'+(a+1+\frac{1}{5}\lambda)\tilde{D}_3+\\
+(a-\mu-\lambda_1-\frac{1}{5}\lambda)E_3+(a-\mu-\lambda_2-\frac{1}{5}\lambda)E_4+b\tilde{D}_1+(b-1+\frac{1}{5}\lambda)E+\\
+(b+1-\frac{1}{5}\lambda)\tilde{D}_2+(b-\frac{4}{5}\lambda-\mu-\lambda_1-\lambda_2)E_1+(b-\frac{4}{5}\lambda-\mu)E_2,\end{gather*}
where $a+b=1+\lambda+\lambda_1+\lambda_2+2\mu$. We see that if $\lambda+\lambda_1+\lambda_2+2\mu>1$ then $X$ has an $H$-polar cylinder.
The second case is when $\tilde{E}_1\cdot E_3=\tilde{E}_2\cdot E_3=1$, $\tilde{E}_1\cdot E_4=\tilde{E}_2\cdot E_4=0$, $\tilde{E}_1\cdot E_1=\tilde{E}_2\cdot E_2=1$, $\tilde{E}_1\cdot E_2=\tilde{E}_2\cdot E_1=0$. Then \begin{gather*}\varphi^*(H)\equiv L=a\tilde{D}_4+(a-1-\frac{1}{5}\lambda)E'+(a+1+\frac{1}{5}\lambda)\tilde{D}_3+\\
+(a-\mu-\lambda_1-\lambda_2-\frac{1}{5}\lambda)E_3+(a-\mu-\frac{1}{5}\lambda)E_4+b\tilde{D}_1+(b-1+\frac{1}{5}\lambda)E+\\
+(b+1-\frac{1}{5}\lambda)\tilde{D}_2+(b-\frac{4}{5}\lambda-\mu-\lambda_1)E_1+(b-\frac{4}{5}\lambda-\mu-\lambda_2)E_2,\end{gather*} where $a+b=1+\lambda+\lambda_1+\lambda_2+2\mu$. Also, we see that if $\lambda+\lambda_1+\lambda_2+2\mu>1$ then $X$ has an $H$-polar cylinder. So, we may assume that $\lambda+\lambda_1+\lambda_2+2\mu\leq 1$.
As above, we have the configuration as in Figure 2. Also, there exist two cases. The first case is when $\tilde{E}_1\cdot E_1=\tilde{E}_2\cdot E_1=1$, $\tilde{E}_1\cdot E_2=\tilde{E}_2\cdot E_2=1$, $\tilde{E}_1\cdot E_3=\tilde{E}_2\cdot E_3=0$, $\tilde{E}_1\cdot E_4=\tilde{E}_2\cdot E_5=1$, $\tilde{E}_1\cdot E_5=\tilde{E}_2\cdot E_4=0$. Then \begin{gather*}\varphi^*(H)\equiv L= bD_1+2bD_2+2aD_3+aD_4+\\
+(2b-1-\frac{3}{5}\lambda-\lambda_1-\lambda_2-2\mu)E_3+(2b-1-\frac{3}{5}\lambda-\lambda_1-\lambda_2-\mu)E_4+\\
+(2b-1-\frac{3}{5}\lambda-\mu)E_5+(2a-1-\frac{2}{5}\lambda-\lambda_1-\mu)E_1+\\
+(2a-1-\frac{2}{5}\lambda-\lambda_2-\mu)E_2+(a-\frac{6}{5}\lambda-\lambda_1-\lambda_2-2\mu)M,\end{gather*} where $a+b=1+\lambda+\lambda_1+\lambda_2+2\mu$, $M$ is a $(0)$-curve such that $M$ pass through the intersection point of $D_2$ and $D_3$. The second case is when $\tilde{E}_1\cdot E_1=\tilde{E}_2\cdot E_1=1$, $\tilde{E}_1\cdot E_2=\tilde{E}_2\cdot E_3=1$, $\tilde{E}_1\cdot E_3=\tilde{E}_2\cdot E_2=0$, $\tilde{E}_1\cdot E_4=\tilde{E}_2\cdot E_4=1$, $\tilde{E}_1\cdot E_5=\tilde{E}_2\cdot E_5=0$. Then \begin{gather*}\varphi^*(H)\equiv L= bD_1+2bD_2+2aD_3+aD_4+\\
+(2b-1-\frac{3}{5}\lambda-\lambda_1-\lambda_2-2\mu)E_3+(2b-1-\frac{3}{5}\lambda-\lambda_1-\mu)E_4+\\
+(2b-1-\frac{3}{5}\lambda-\lambda_2-\mu)E_5+(2a-1-\frac{2}{5}\lambda-\lambda_1-\lambda_2-\mu)E_1+\\
+(2a-1-\frac{2}{5}\lambda-\mu)E_2+(a-\frac{6}{5}\lambda-\lambda_1-\lambda_2-2\mu)M,\end{gather*} where $a+b=1+\lambda+\lambda_1+\lambda_2+2\mu$, $M$ is a $(0)$-curve such that $M$ pass through the intersection point of $D_2$ and $D_3$. We see that in both cases if $\lambda+\lambda_1+\lambda_2+2\mu<1$ then $X$ has an $H$-polar cylinder. So, we may assume that $\lambda+\lambda_1+\lambda_2+2\mu=1$. Consider the configuration as in Figure 2. Also, we have two cases. The first case is when $\tilde{E}_1\cdot E_1=\tilde{E}_2\cdot E_1=1$, $\tilde{E}_1\cdot E_2=\tilde{E}_2\cdot E_2=0$, $\tilde{E}_1\cdot E_3=\tilde{E}_2\cdot E_3=1$, $\tilde{E}_1\cdot E_4=\tilde{E}_2\cdot E_5=1$, $\tilde{E}_1\cdot E_5=\tilde{E}_2\cdot E_4=0$. Then \begin{gather*}\varphi^*(H)\equiv L= bD_1+(b+1-\frac{1}{5}\lambda)D_2+2aD_3+aD_4+\\
+(b-1+\frac{1}{5}\lambda)E+(b-\frac{4}{5}\lambda-\lambda_1-\lambda_2-2\mu)E_3+(b-\frac{4}{5}\lambda-\lambda_1-\mu)E_4+\\
+(b-\frac{4}{5}\lambda-\lambda_2-\mu)E_5+(2a-1-\frac{2}{5}\lambda-\lambda_1-\lambda_2-\mu)E_1+\\
+(2a-1-\frac{2}{5}\lambda-\mu)E_2,\end{gather*} where $a+b=1+\lambda+\lambda_1+\lambda_2+2\mu$. We see that $L$ is an effective divisor if and only if $$\begin{cases}a>\frac{1+\lambda_1+\lambda_2+\mu}{2}+\frac{\lambda}{5}\\
b>1-\frac{\lambda}{5}\\
b>\lambda_1+\lambda_2+\frac{4}{5}\lambda+2\mu.\end{cases}$$ We obtain $$\begin{cases}1+\lambda+\lambda_1+\lambda_2+2\mu>1+\frac{1+\lambda_1+\lambda_2+\mu}{2}\\
1+\lambda+\lambda_1+\lambda_2+2\mu>\frac{1}{2}+\frac{3}{2}(\lambda_1+\lambda_2)+\lambda+\frac{5}{2}\mu.\end{cases}$$ Hence, $$\begin{cases} 2\lambda+\lambda_1+\lambda_2+3\mu>1\\
\lambda_1+\lambda_2+\mu<1.\end{cases}$$ Since $\lambda+\lambda_1+\lambda_2+2\mu=1$, we see that both inequality are holds. Therefore, there exists $a$ and $b$ such that $L$ is an effective divisor. Then $X$ has an $H$-polar cylinder. The second case is when $\tilde{E}_1\cdot E_1=\tilde{E}_2\cdot E_2=1$, $\tilde{E}_1\cdot E_2=\tilde{E}_2\cdot E_1=0$, $\tilde{E}_1\cdot E_3=\tilde{E}_2\cdot E_3=1$, $\tilde{E}_1\cdot E_4=\tilde{E}_2\cdot E_4=1$, $\tilde{E}_1\cdot E_5=\tilde{E}_2\cdot E_5=0$. Then \begin{gather*}\varphi^*(H)\equiv L= bD_1+(b+1-\frac{1}{5}\lambda)D_2+2aD_3+aD_4+\\
+(b-1+\frac{1}{5}\lambda)E+(b-\frac{4}{5}\lambda-\lambda_1-\lambda_2-2\mu)E_3+\\
+(b-\frac{4}{5}\lambda-\lambda_1-\lambda_2-\mu)E_4+(b-\frac{4}{5}\lambda-\mu)E_5+\\
+(2a-1-\frac{2}{5}\lambda-\lambda_1-\mu)E_1+(2a-1-\frac{2}{5}\lambda-\lambda_2-\mu)E_2,\end{gather*} where $a+b=1+\lambda+\lambda_1+\lambda_2+2\mu$. We see that $L$ is an effective divisor if and only if $$\begin{cases}a>\frac{1+\lambda_1+\mu}{2}+\frac{\lambda}{5}\\
b>1-\frac{\lambda}{5}\\
b>\lambda_1+\lambda_2+\frac{4}{5}\lambda+2\mu.\end{cases}$$ We obtain $$\begin{cases} 2\lambda+\lambda_1+2\lambda_2+3\mu>1\\
\lambda_1+\mu<1.\end{cases}$$ Since $\lambda+\lambda_1+\lambda_2+2\mu=1$, we see that both inequality are holds. Therefore, there exists $a$ and $b$ such that $L$ is an effective divisor. Then $X$ has an $H$-polar cylinder.
\end{proof}

\begin{lemma}
\label{LemA4A3}
Let $X$ be a del Pezzo surface with du Val singularities and let $H$ be an ample divisor on $X$. Assume that $X$ has the following collection of singularities $A_4+A_3$. Then $X$ has an $H$-polar cylinder.
\end{lemma}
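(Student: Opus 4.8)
The plan is to run the same argument as in Lemmas \ref{Lem1}--\ref{LemA4}. Write $\varphi\colon\tilde X\to X$ for the minimal resolution and $D=\sum_{i=1}^{7}D_i$ for its exceptional divisor, with $D_1-D_2-D_3-D_4$ the chain of $(-2)$-curves over the $A_4$ point and $D_5-D_6-D_7$ the chain over the $A_3$ point. Since $K_X^2=1$, the surface $\tilde X$ is a weak del Pezzo surface of degree $1$, so $\rho(\tilde X)=9$ and hence $\rho(X)=9-7=2$. Consequently $r_H\in\{0,1\}$. If $r_H=0$ then $-K_X\equiv\mu_H H$, so $X$ has an $H$-polar cylinder if and only if it has a $-K_X$-polar cylinder; as $X$ carries a point of type $A_4$ it is not among the surfaces of Theorem \ref{theorem:cylinders:-K_S}.I, so it has a $-K_X$-polar cylinder and we are done. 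It therefore remains to treat $r_H=1$, where, after rescaling $H$ so that $\mu_H=1$, either $H$ is of type $B(1)$, so $H\equiv-K_X+aE$ for a single $(-1)$-curve $E$ on $X$ and some $a>0$, or $H$ is of type $C(1)$, so there is a conic bundle $\phi_H\colon X\to\PP^1$ and $H\equiv-K_X+aC+bE$ with $C$ a $0$-curve, $E$ a component of a reducible fibre, $a>0$ and $0\le b<1$ (with $b=0$, $E$ absent, allowed).

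For each of these two types, the classification of del Pezzo surfaces of degree $1$ with singularities $A_4+A_3$ (see \cite{Fur}, \cite{Ma}, \cite{Ye}) leaves only finitely many possibilities for the curve $E$ (resp. for the pair $C,E$), organised by which of the two singular points the curve passes through; the proper transform $\hat E$ of $E$ on $\tilde X$ is a $(-1)$-curve, and the admissible range for $a$ depends on this incidence, being $0<a<1$ when $E$ avoids $\Sing X$ and wider, of the shape already seen in Lemmas \ref{Lemm3} and \ref{LemA4}, when $E$ meets one of the chains. In every case I would proceed exactly as in those lemmas. First draw the configuration formed by $\hat E$, the $D_i$ and the few auxiliary $(-1)$-curves meeting them (for type $C(1)$ one also records the explicit representative $\varphi^*(C)\equiv R$ as a combination of the $D_i$ and two or three $(-1)$-curves coming from the fibre components lying over the images of $\Sing X$). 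Then write down an effective $\RR$-divisor $L\equiv\varphi^*(H)$ supported on $D$, $\hat E$ and these auxiliary curves, depending on a small parameter $\epsilon$ — or, when $E$ meets an end of the $A_4$-chain, on a parameter $x$ with $x+y$ fixed, as in the $D_5$ cases of Lemma \ref{Lemm1} — and using, where needed, one of the alternative representatives of $\varphi^*(E)$ obtained from a linear equivalence of the form $\hat E+(\text{part of }D)+E'\sim(\text{another chain of }D_i\text{'s and }(-1)\text{-curves})$, as in Lemmas \ref{Lemm1}--\ref{Lemm3}. Finally exhibit a $\PP^1$-fibration $g\colon\tilde X\to\PP^1$, possibly after one blow-up $h\colon\bar X\to\tilde X$ of a node of $D$, whose reducible fibres lie inside $\Supp(L)$; then $\tilde X\setminus\Supp(L)$ (or $\bar X\setminus\Supp(\cdots)$) is isomorphic to $\FF_n\setminus(M+F_1+\cdots+F_k)$ with $M$ the negative section and $F_1,\dots,F_k$ fibres, hence a cylinder. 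Since $\Supp(L)\supseteq D$, the map $\varphi$ identifies this open set with $X\setminus\Supp(\varphi_*L)$, where $\varphi_*L$ is effective and $\varphi_*L\equiv H$, so $X$ has an $H$-polar cylinder.

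The only genuine difficulty is the bookkeeping: enumerating the $(-1)$-curves of $\tilde X$ that can span the Fujita face $\Delta_H$ or occur in the reducible fibres of a conic bundle on $X$, and, in the subcases where the Fujita invariant forces the coefficient of $E$ (respectively the multiple of $C$) past the threshold $\frac{1}{2}$, carrying out the linear-equivalence swap together with the one or two extra blow-ups of nodes of $D$ needed so that the section of the auxiliary $\PP^1$-fibration becomes a $(-1)$-curve, i.e. so that one lands on $\FF_1$ rather than on $\FF_2$. I do not expect any new phenomenon beyond those already met in Lemmas \ref{Lemm1}, \ref{Lemm3} and \ref{LemA4}: the present case is in effect the superposition of the $A_4$-analysis of Lemma \ref{LemA4} with the short-chain arguments used there for $D_5$ and for $A_7$, constrained further by $\rho(X)=2$, which cuts the number of subcases down sharply.
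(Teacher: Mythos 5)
Your outline reproduces the paper's general strategy, but as written it is a plan rather than a proof: every step that actually constitutes the content of the lemma is deferred to ``bookkeeping'' that you never carry out. Concretely, three things are missing. (i) You never determine which curves can span the Fujita face for this particular surface. The paper uses $\rho(X)=2$ together with the structure of the $A_4+A_3$ surface to reduce to the single case $H\equiv-K_X+\lambda\varphi(E)$, where $E$ is the $(-1)$-curve on $\tilde X$ meeting $D_7$ (and none of $D_1,\dots,D_6$), with $0<\lambda<4$; in particular no conic-bundle case occurs, and ruling it out amounts to checking that no $\PP^1$-fibration of $\tilde X$ can have all seven $(-2)$-curves vertical with only one extra component in each degenerate fibre (the multiplicity equations for a fibre supported on the $A_4$-chain plus one $(-1)$-curve have no integral solution). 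You leave all of this to an unspecified classification. Incidentally, with $\rho(X)=2$ a type $C(1)$ divisor would have the form $-K_X+aC$ only, since $X$ has no reducible conic fibres, so the ``$+bE$'' in your type-$C$ normal form is already off. (ii) You never produce the effective divisor. The paper writes the explicit
$L=aD_4+2aD_3+2bD_2+bD_1+(2a-1)E_1+(2b-1-\tfrac34\lambda)\bigl(D_7+2D_6+3D_5+4E_2\bigr)+(3a-1-2\lambda)M$
with $a+b=1+\lambda$, where $M$ is a $0$-curve through the node $D_2\cap D_3$, and verifies nonnegativity of all coefficients for suitable $a,b$ on the whole range $0<\lambda<4$. (iii) You never exhibit the cylinder: here one needs four successive blow-ups of nodes to reach a surface $Y_4$ carrying a $\PP^1$-fibration with exactly two singular fibres (with component multiplicities up to $8$), whose complement is $\FF_1$ minus the negative section and two fibres.

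The assertion that ``no new phenomenon'' will appear is precisely what has to be proved. In the neighbouring Lemmas \ref{Lemm3} and \ref{LemA4} the effectivity thresholds genuinely force different representatives of $\varphi^*(E)$ and different fibrations in different parameter ranges, so the existence of a single uniform $L$ covering $0<\lambda<4$, and of a fibration whose singular fibres lie in $\Supp(L)$, cannot be presumed by analogy; it must be exhibited, as the paper does. Until you pin down the admissible Fujita faces, write down $L$ and check its effectivity, and construct the fibration, the lemma has not been established.
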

\begin{proof}
Let $\varphi\colon\tilde{X}\rightarrow X$ be the minimal resolution of singularities of $X$ and let $D=\sum_{i=1}^7 D_i$ be the exceptional divisor of $\varphi$. We may assume that $D_1,D_2,D_3,D_4$ correspond to singular point of type $A_4$ and $D_5,D_6,D_7$ correspond to singular point of type $A_3$. Since $\rho(X)=2$, we see that $H=-K_X+\lambda\varphi(E)$, where $E$ meets $D_7$ and does not meet $D_1,\ldots,D_6$. Note that $0<\lambda<4$.
We have the following configuration on $\tilde{X}$.\\
\begin{tikzpicture}
\draw  (0.5,0) -- (0.5,5) (0,0.5) -- (8,0.5) (7.5,0)--(7.5,5) (7.2,4.5)--(10,4.5) (3.5,1.5)--(5,1.5) (4.5,1)--(4.5,4) (4,3.5)--(6,3.5); \draw [dashed]  (7,1.5)--(9,1.5) (4,0)--(4,2);
\draw (1,0.5) node [above] {$D_3$} (0.5,5) node [above] {$D_4$} (7.5,5) node [above] {$D_2$}  (10,4.5) node [right] {$D_1$} (5,1.5) node [right] {$D_5$} (4.5,4) node [above] {$D_6$} (6,3.5) node [right] {$D_7$}  (9,1.5) node [right] {$E_1$} (4,2) node [above] {$E_2$};
\end{tikzpicture}$$\text{Figure 8}$$
where dotted lines are $(-1)$-curves, $D_1,\ldots,D_7$ are $(-2)$-curves. We have \begin{gather*}\varphi^*(H)\equiv L=aD_4+2aD_3+2bD_2+bD_1+(2a-1)E_1+\\
+(2b-1-\frac{3}{4}\lambda)D_7+2(2b-1-\frac{3}{4}\lambda)D_6+3(2b-1-\frac{3}{4}\lambda)D_5+\\
+4(2b-1-\frac{3}{4}\lambda)E_2+(3a-1-2\lambda)M,\end{gather*} where $a+b=1+\lambda$, $M$ is a $(0)$-curve such that $M$ pass through the intersection point of $D_2$ and $D_3$ and $M\cdot D_2=M\cdot D_3=1$. Since $\lambda<4$, we see that there exist $a$ and $b$ such that $L$ is an effective divisor. Let $Y_1\rightarrow\tilde{X}$ be the blow up of the intersection point of $D_2$ and $D_3$, and $N_1$ be the exceptional divisor. Let $Y_2\rightarrow Y_1$ be the blow up of the intersection point of $N_1$ and the proper transform of $D_3$, and $N_2$ be the exceptional divisor. Let $Y_3\rightarrow Y_2$ be the blow up of the intersection point of $N_2$ and the proper transform of $D_3$, and $N_3$ be the exceptional divisor. Let $Y_4\rightarrow Y_3$ be the blow up of the intersection point of $N_3$ and the proper transform of $N_2$, and $F$ be the exceptional divisor.
We obtain a surface $Y_4$ with the following configuration\\
\begin{tikzpicture}
\draw  (3,0) -- (0,2) (0.5,1.5) -- (0.5,4) (0.2,3.2)--(2,5) (1,4.8)--(4,4.8) (6,0)--(8,3) (7.5,1.3)--(7.5,6) (8,5.5)--(5,6.5) (8.5,3)--(8.5,5) (8,4.5)--(10,4.5) (9.5,4)--(9.5,6); \draw [dashed] (2,0.5) -- (7,0.5) (0,3)--(3,3) (1.8,3.2)--(0,5) (6.8,3.5)--(9,3.5);
\draw (7.5,1.3) node [right] {$\tilde{D}_3$} (2,5) node [above] {$\tilde{D}_2$} (3.5,4.8) node [above] {$\tilde{D}_1$}  (5,6.5) node [below] {$\tilde{D}_4$} (0,5) node [above] {$\tilde{E}_1$} (9,3.5) node [right] {$\tilde{E}_2$} (3,3) node [right] {$\tilde{M}$} (7,0.5) node [right] {$F$} (0.5,2.2) node [right] {$\tilde{N}_1$} (1,1.5) node [right] {$\tilde{N}_2$} (6.8,1.1) node [left] {$\tilde{N}_3$} (8.6,3) node [below] {$\tilde{D}_5$} (10,4.5) node [right] {$\tilde{D}_6$} (9.5,6) node [above] {$\tilde{D}_7$};
\end{tikzpicture}\\
where $\tilde{D}_1,\ldots,\tilde{D}_7$ are the proper transforms of $D_1,\ldots, D_7$, $\tilde{E}_1, \tilde{E}_2$ are the proper transforms of $E_1,E_2$, $\tilde{N}_1, \tilde{N}_2, \tilde{N}_3$ are the proper transforms of $N_1, N_2, N_3$, $\tilde{M}$ is the proper transform of $M$. Note that there exists $\PP^1$-fibration $g\colon Y_4\rightarrow\PP^1$ such that $g$ has only two singular fibers $C_1, C_2$ and $$C_1=\tilde{D}_1+\tilde{N}_2+2\tilde{D}_2+2\tilde{E}_1+3\tilde{N}_1+3\tilde{M},$$ $$C_2=\tilde{D}_4+\tilde{N}_3+2\tilde{D}_3+2\tilde{D}_5+4\tilde{D}_6+6\tilde{D}_7+8\tilde{E}_2.$$
So, $$\tilde{X}\backslash\Supp(L)\cong\FF_1\backslash(K+F_1+F_2),$$ where $K$ is a unique $(-1)$-curve and $F_1, F_2$ are fibers. Then $X$ has an $H$-polar cylinder.
\end{proof}

\begin{lemma}
\label{LemA4A2A1}
Let $X$ be a del Pezzo surface with du Val singularities and let $H$ be an ample divisor on $X$. Assume that $X$ has the following collection of singularities $A_4+A_2+A_1$. Then $X$ has an $H$-polar cylinder.
\end{lemma}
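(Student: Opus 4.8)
As in the preceding lemmas, let $\varphi\colon\tilde{X}\rightarrow X$ be the minimal resolution of singularities and let $D=\sum_{i=1}^{7}D_i$ be its exceptional divisor, where $D_1,D_2,D_3,D_4$ is the chain of $(-2)$-curves over the $A_4$-point, $D_5,D_6$ the chain over the $A_2$-point, and $D_7$ the $(-2)$-curve over the $A_1$-point. Since $\deg X=1$ we have $K_{\tilde{X}}^2=1$, hence $\rho(\tilde{X})=9$ and $\rho(X)=9-7=2$. A $\PP^1$-fibration on $\tilde{X}$ that descends to $X$ must contain every $D_i$ in a fibre; the $A_4$-chain then forces a fibre with at least six components, the $A_2$-chain a fibre with at least four, and $D_7$ a fibre with at least three, so the reducible fibres would contribute at least $5+3+2=10$ to $\rho(\tilde{X})-2=7$, which is impossible. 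Hence $X$ admits no conic bundle, so $H$ is of type $B(r_H)$ with $r_H\le 1$. If $r_H=0$, then $-K_X\equiv\mu_H H$; since the collection $A_4+A_2+A_1$ does not occur in part I of Theorem \ref{theorem:cylinders:-K_S}, the surface $X$ has a $(-K_X)$-polar cylinder, and a suitable positive multiple of the divisor defining it is numerically equivalent to $H$, so $X$ has an $H$-polar cylinder. We may therefore assume $r_H=1$ and, after rescaling, $H\equiv -K_X+\lambda\,\varphi(E)$ for a $(-1)$-curve $E$ on $\tilde{X}$ and some $\lambda>0$.

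From here the proof follows the pattern of Lemmas \ref{Lem1}--\ref{LemA4A3}, the case division being according to the position of $E$ relative to $D$: $E$ disjoint from $D$ (then $\lambda<1$); $E$ meeting the $A_4$-chain in one point, so that $\varphi^{*}(E)=E+\frac{4}{5}D_1+\frac{3}{5}D_2+\frac{2}{5}D_3+\frac{1}{5}D_4$ after possibly relabelling the chain (then $\lambda<5$); $E$ meeting the $A_2$-chain, $\varphi^{*}(E)=E+\frac{2}{3}D_5+\frac{1}{3}D_6$ (then $\lambda<3$); or $E$ meeting $D_7$, $\varphi^{*}(E)=E+\frac{1}{2}D_7$ (then $\lambda<2$). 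In each subcase one draws the configuration of negative curves on $\tilde{X}$ just as in the figures above, writes $\varphi^{*}(H)\equiv L$ as an explicit effective $\RR$-divisor supported on that configuration with the coefficients carrying one free parameter (a pair $x,y$ with $x+y$ a fixed constant, or a small $\epsilon>0$) chosen so that $L\ge 0$, and produces a $\PP^1$-fibration $g\colon\tilde{X}\rightarrow\PP^1$ (or $g\colon\bar{X}\rightarrow\PP^1$ after one auxiliary blow-up) whose every reducible fibre is contained in $\Supp(L)$. Then $\tilde{X}\setminus\Supp(L)\cong\FF_n\setminus(M+F_1+\cdots+F_k)$, where $M$ is the negative section and the $F_i$ are fibres; this is a cylinder, and pushing $L$ forward to $X$ produces an $H$-polar cylinder.

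As in the earlier lemmas, the only genuine difficulty arises when $\lambda$ is near its upper bound: then the naive decomposition of $\varphi^{*}(H)$ acquires a negative coefficient on $E$ (or on some $D_i$), and one must first rewrite part of $\Supp(L)$ using a linear equivalence among the configuration curves --- of the same kind as the relations exploited in Lemmas \ref{Lemm1}, \ref{Lemm3} and \ref{LemA4} --- which in general costs one blow-up, replacing the target $\FF_2$ by $\FF_1$. Checking that the resulting system of linear inequalities on the coefficients has a common solution over the whole admissible range of $\lambda$, and selecting the blow-up so that the reducible fibres remain inside $\Supp(L)$, is the delicate step; it is carried out case by case exactly as in the proof of Lemma \ref{LemA4}.
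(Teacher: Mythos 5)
There is a genuine gap: after reducing to $H\equiv -K_X+\lambda\,\varphi(E)$, your proof stops being a proof and becomes a plan. Everything that actually establishes the lemma in the paper is precisely what you defer: one has to pin down the position of $E$ (here $\rho(X)=2$ and the relevant $(-1)$-curve meets $D_1$ only, with $0<\lambda<5$, so $\varphi^*(H)=-K_{\tilde X}+\lambda E+\tfrac45\lambda D_1+\tfrac35\lambda D_2+\tfrac25\lambda D_3+\tfrac15\lambda D_4$), then exhibit explicit effective decompositions $L\equiv\varphi^*(H)$ --- one valid for $\lambda>\tfrac12$ supported on the configuration containing $E$ itself, and one for $\lambda\le\tfrac12$ in which $E$ is traded for a $0$-curve $M$ through $D_2\cap D_3$ --- and then exhibit the specific chains of blow-ups $Y_i\to\tilde X$ and a $\PP^1$-fibration with exactly two singular fibers whose supports, together with the section, lie in $\Supp(L)$, so that the complement is $\FF_1$ minus a section and two fibers. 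Asserting that this "is carried out case by case exactly as in the proof of Lemma \ref{LemA4}" does not close the gap: Lemma \ref{LemA4} concerns a surface with a single $A_4$ point, whose $(-1)$-curve configuration, admissible decompositions and fibrations are different, so nothing there applies verbatim to the $A_4+A_2+A_1$ surface; whether the required inequalities on the coefficients have a solution over the whole range $0<\lambda<5$, and whether the singular fibers stay inside $\Supp(L)$, is exactly the content that must be checked and is not.

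Two smaller points. First, your exclusion of the conic-bundle (type $C$) case is incomplete as written: the count "$5+3+2=10>7$" presupposes that the $A_4$-chain, the $A_2$-chain and $D_7$ lie in three distinct fibres, and the minimal component counts are quoted without justification; you must also rule out the configurations in which two or all three chains sit in one fibre (this can be done --- e.g.\ a single fibre containing all seven $(-2)$-curves and one $(-1)$-curve fails the multiplicity equations, and two $(-1)$-curves already force a contribution of at least $8>7$ --- but your argument does not do it). Second, your list of possible positions of $E$ (disjoint from $D$, meeting the $A_4$-chain, the $A_2$-chain, or $D_7$) is a safe over-enumeration, but since $\rho(X)=2$ only one of these actually occurs, and identifying which one is part of the work; leaving it open multiplies the unverified cases rather than reducing them.
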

\begin{proof}
Let $\varphi\colon\tilde{X}\rightarrow X$ be the minimal resolution of singularities of $X$ and let $D=\sum_{i=1}^7 D_i$ be the exceptional divisor of $\varphi$. We may assume that $D_1,D_2,D_3,D_4$ correspond to the singular point of type $A_4$, $D_5,D_6$ correspond to the singular point of type $A_2$, $D_7$ correspond to the singular point of type $A_1$. Since $\rho(X)=2$, we see that $H=-K_X+\lambda\varphi(E)$, where $E$ meets $D_1$ and does not meet $D_2,\ldots,D_7$. Note that $0<\lambda<5$. Note that $$\varphi^*(H)=-K_{\tilde{X}}+\lambda E+\frac{4}{5}\lambda D_1+\frac{3}{5}\lambda D_2+\frac{2}{5}\lambda D_3+\frac{1}{5}\lambda D_4.$$
We have the following configuration on $\tilde{X}$.\\
\begin{tikzpicture}
\draw  (0.5,0) -- (0.5,5) (0,0.5) -- (8,0.5) (7.5,0)--(7.5,5) (7.2,4.5)--(10,4.5) (3.5,1.5)--(5,1.5) (4.5,1)--(4.5,4) (9.5,1)--(9.5,3); \draw [dashed]  (7,1.5)--(10,1.5) (4,0)--(4,2) (0,4)--(3,4);
\draw (1,0.5) node [above] {$D_2$} (0.5,5) node [above] {$D_1$} (7.5,5) node [above] {$D_3$}  (10,4.5) node [right] {$D_4$} (5,1.5) node [right] {$D_5$} (4.5,4) node [above] {$D_6$} (9.5,3) node [above] {$D_7$}  (10,1.5) node [right] {$E_1$} (4,2) node [above] {$E_2$} (3,4) node [right] {$E$};
\end{tikzpicture}$$\text{Figure 9}$$
where dotted lines are $(-1)$-curves, $D_1,\ldots,D_7$ are $(-2)$-curves. We have \begin{gather*}\varphi^*(H)\equiv L=(a-2+\lambda)E+(a-1+\frac{4}{5}\lambda)D_1+(a+\frac{3}{5}\lambda)D_2+\\
+(a-1)D_6+(2a-2)D_5+(3a-3)E_2+(b-1+\frac{1}{5}\lambda)D_4+\\
+(2b-2+\frac{2}{5}\lambda)D_3+(2b-3)D_7+(4b-6)E_1,\end{gather*} where $a+b=3$. Note that if $\lambda>\frac{1}{2}$ then there exist $a$ and $b$ such that $L$ is an effective divisor. Let $Y_1\rightarrow\tilde{X}$ be the blow up of intersection point of $D_2$ and $D_3$, and let $N$ be the exceptional divisor. Let $Y_2\rightarrow Y_1$ be the blow up of intersection point of $N$ and the proper transform of $D_2$, and let $F$ be the exceptional divisor. Then there exists $\PP^1$-fibration $g\colon Y_2\rightarrow\PP^1$ such that $g$ has only two singular fibers $C_1, C_2$ and $$C_1=\tilde{E}+\tilde{D}_1+\tilde{D}_2+\tilde{D}_6+2\tilde{D}_5+3\tilde{E}_2,$$ $$C_2=\tilde{D}_4+\tilde{N}+2\tilde{D}_3+2\tilde{D}_7+4\tilde{E}_1,$$ where $\tilde{D}_1,\ldots,\tilde{D}_7$ are the proper transforms of $D_1,\ldots, D_7$, $\tilde{E}_1, \tilde{E}_2$ are the proper transforms of $E_1,E_2$, $\tilde{E}$ is the proper transform of $E$, $\tilde{N}$ is the proper transform of $N$. On the other hand, $$\tilde{X}\backslash\Supp(L)\cong\FF_1\backslash(K+F_1+F_2),$$ where $K$ is a unique $(-1)$-curve and $F_1, F_2$ are fibers. Then $X$ has an $H$-polar cylinder. Assume that $\lambda\leq\frac{1}{2}$. We have \begin{gather*}\varphi^*(H)\equiv L=aD_1+2aD_2+(2a-1-\frac{3}{5}\lambda)D_6+2(2a-1-\frac{3}{5}\lambda)D_5+\\
+3(2a-1-\frac{3}{5}\lambda)E_2+bD_4+2bD_3+(2b-1-\frac{2}{5}\lambda)D_7+\\
+2(2b-1-\frac{2}{5}\lambda)E_1+(b-\frac{6}{5}\lambda)M,\end{gather*}
where $a+b=1+\lambda$, $M$ is a $(0)$-curve such that $M$ pass through the intersection point of $D_2$ and $D_3$ and $M\cdot D_2=M\cdot D_3=1$. Note that if $\lambda<1$ then there exist $a$ and $b$ such that $L$ is an effective divisor. Let $Y_1\rightarrow\tilde{X}$ be the blow up of the intersection point of $D_2$ and $D_3$, and $N_1$ be the exceptional divisor. Let $Y_2\rightarrow Y_1$ be the blow up of the intersection point of $N_1$ and the proper transform of $D_2$, and $N_2$ be the exceptional divisor. Let $Y_3\rightarrow Y_2$ be the blow up of the intersection point of $N_2$ and the proper transform of $N_1$, and $F$ be the exceptional divisor.
We obtain a surface $Y_3$ with the following configuration\\
\begin{tikzpicture}
\draw  (3,0) -- (0,2) (0.5,1.5) -- (0.5,4) (0.2,3.2)--(2,5) (6,0)--(8,3) (7.5,1.3)--(7.5,6) (8,5.5)--(5,6.5) (8.5,3)--(8.5,5) (3.5,2.5)--(3.5,5) (3,4.5)--(5,4.5); \draw [dashed] (2,0.5) -- (7,0.5) (0,3)--(4,3) (6.8,3.5)--(9,3.5) (7,0.8)--(6,3);
\draw (7.5,1.3) node [right] {$\tilde{D}_3$} (2,5) node [above] {$\tilde{D}_1$} (3.5,5) node [above] {$\tilde{D}_5$}  (5,6.5) node [below] {$\tilde{D}_4$} (9,3.5) node [right] {$\tilde{E}_1$} (4,3) node [right] {$\tilde{E}_2$} (7,0.5) node [right] {$F$} (0.5,2.2) node [right] {$\tilde{D}_2$} (1,1.5) node [right] {$\tilde{N}_2$} (6.7,1.1) node [left] {$\tilde{N}_1$} (8.5,5) node [right] {$\tilde{D}_7$} (5,4.5) node [right] {$\tilde{D}_6$} (6,3) node [left] {$\tilde{M}$};
\end{tikzpicture}\\
where $\tilde{D}_1,\ldots,\tilde{D}_7$ are the proper transforms of $D_1,\ldots, D_7$, $\tilde{E}_1, \tilde{E}_2$ are the proper transforms of $E_1,E_2$, $\tilde{N}_1, \tilde{N}_2$ are the proper transforms of $N_1, N_2$, $\tilde{M}$ is the proper transform of $M$. Note that there exists $\PP^1$-fibration $g\colon Y_3\rightarrow\PP^1$ such that $g$ has only two singular fibers $C_1, C_2$ and
$$C_1=\tilde{D}_1+\tilde{N}_2+2\tilde{D}_2+2\tilde{D}_6+4\tilde{D}_5+6\tilde{E}_2,$$ $$C_2=\tilde{D}_4+\tilde{N}_1+\tilde{M}+2\tilde{D}_3+2\tilde{D}_7+4\tilde{E}_1.$$
On the other hand, $$\tilde{X}\backslash\Supp(L)\cong\FF_1\backslash(K+F_1+F_2),$$ where $K$ is a unique $(-1)$-curve and $F_1, F_2$ are fibers. Then $X$ has an $H$-polar cylinder.
\end{proof}

\begin{lemma}
\label{LemA4A1}
Let $X$ be a del Pezzo surface with du Val singularities and let $H$ be an ample divisor on $X$. Assume that $X$ has the following collection of singularities $A_4+A_1$. Then $X$ has an $H$-polar cylinder.
\end{lemma}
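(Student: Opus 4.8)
\emph{Approach.} The plan is to argue exactly as in Lemmas \ref{LemA4}, \ref{LemA4A3} and \ref{LemA4A2A1}: resolve, split into cases according to the Fujita type of $H$ and the incidence of the extremal curves with the singular points, and in each case produce an effective $\RR$-divisor numerically equivalent to $\varphi^{*}(H)$ whose complement carries a $\PP^1$-fibration turning it into a cylinder. Let $\varphi\colon\tilde X\to X$ be the minimal resolution, with exceptional divisor $D=D_1+D_2+D_3+D_4+D_5$, where $D_1-D_2-D_3-D_4$ is the $A_4$-chain over the point $P$ and $D_5$ is the $(-2)$-curve over the $A_1$-point $Q$. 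Since $\deg X=1$ and $X$ has exactly five exceptional curves, $\rho(X)=4$, so the Fujita rank satisfies $r_H\le 3$, and if $r_H=0$ then $-K_X\equiv\mu_H H$ and we are done by hypothesis. It therefore remains to treat $H\equiv -K_X+\sum_{i=1}^{r_H}a_iE_i$ (type $B$, $r_H\in\{1,2,3\}$) and $H\equiv -K_X+aC+\sum_{i}a_iE_i$ (type $C$, with $C$ a $0$-curve), always with $0\le a_i<1$ and $a>0$; inside each of these one further distinguishes whether a given extremal $(-1)$-curve $E$ misses $P$ and $Q$, or passes through $Q$ (so $\varphi^{*}(E)=\tfrac12 D_5+\hat E$), or meets the $i$-th vertex of the $A_4$-chain (so $\varphi^{*}(E)=\tfrac{4}{5}D_1+\tfrac{3}{5}D_2+\tfrac{2}{5}D_3+\tfrac15 D_4+\hat E$, re-indexed from that vertex). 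The admissible incidences are exactly those permitted by the classification of degree-$1$ weak del Pezzo surfaces (\cite{Fur}, \cite{Ma}, \cite{Ye}), and the resulting dual graphs are the degenerations of Figures 1 and 2 obtained by grafting $D_5$ and its $(-1)$-curve onto the $A_4$-picture.

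\emph{The three-step machine.} First, using the linear equivalences forced by the above configurations (such as $\hat E+D_5+(\text{subchain})\sim(\text{complementary chain})$ and $\tilde C\sim R$ for a $0$-curve $C$), rewrite $\varphi^{*}(H)$ modulo numerical equivalence as $L=\sum c_j\Gamma_j$, a sum over negative curves of $\tilde X$ whose coefficients are affine in one or two free parameters: a small $\epsilon>0$ in the rigid cases, or a pair $x,y$ with $x+y$ a fixed positive constant in the cases modelled on the final part of Lemma \ref{LemA4}. Second, choose the parameters so that all $c_j\ge 0$; this succeeds precisely because each $a_i<1$ (and the $0$-curve enters with a strictly positive coefficient), so that $L\ge 0$ and $L\equiv\varphi^{*}(H)$. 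Third, exhibit a $\PP^1$-fibration $g\colon\tilde X\to\PP^1$ --- after, where the Fujita data requires it, one or two further blow-ups $h\colon\bar X\to\tilde X$ at $D_2\cap D_3$ and at the point where an extremal $(-1)$-curve meets the chain, exactly as in Lemmas \ref{LemA4}, \ref{LemA4A3}, \ref{LemA4A2A1} --- whose singular fibres are supported inside $\Supp(L)$ and whose general fibre meets exactly one curve off $\Supp(L)$. Contracting, as in those lemmas, the superfluous fibre components (all contained in $\Supp(L)$) then yields $\tilde X\setminus\Supp(L)\cong\FF_n\setminus(M+F_1+\cdots+F_k)$ with $M$ the negative section and the $F_i$ fibres; since $\FF_n\setminus(M+F_1+\cdots+F_k)\cong\AAA^1\times(\AAA^1\setminus\{k-1\ \text{points}\})$ is a cylinder, $U=X\setminus\varphi(\Supp(L))$ is an $H$-polar cylinder.

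\emph{Main obstacle.} The only genuine work is combinatorial: enumerating the finitely many incidence sub-cases (for $r_H=3$ in type $B$, and for $r_H=2$ in type $C$, this parallels the enumeration of the $\hat E_i\cdot E_j$ at the end of Lemma \ref{LemA4}), and verifying in each one both that $L$ is effective for some admissible choice of parameters and that the stated $\PP^1$-fibration has precisely the listed singular fibres. The subtle points are the borderline situations in which the effectivity condition, after substituting the constraint on the free parameters, degenerates to an equality among the $a_i$; there, as in Lemma \ref{LemA4}, the two residual linear inequalities in the $a_i$ turn out to be complementary, hence simultaneously satisfied, so an admissible choice of parameters still exists. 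I expect no new geometric input beyond the degree-$1$ weak del Pezzo classification and the cylinder structure of Hirzebruch surfaces minus a section and some fibres, both already exploited throughout Section 3.
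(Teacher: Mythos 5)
Your overall strategy is the same as the paper's (resolve, case-split on the Fujita data and on how the extremal curves meet the two singular points, write $\varphi^{*}(H)\equiv L$ with $L$ effective, and exhibit a $\PP^1$-fibration, possibly after auxiliary blow-ups, whose singular fibres lie in $\Supp(L)$). But as written the proposal is a programme rather than a proof: none of the roughly dozen incidence cases is actually carried out, no divisor $L$ is produced, no effectivity range is verified, and no fibration is exhibited. The entire content of the paper's argument for $A_4+A_1$ consists precisely of these explicit computations (the configurations of Figures 10--18 and 25, the coefficient lists, the blow-up sequences $Y_1\to\cdots\to Y_4$, and the identification of the singular fibres), so deferring them to "combinatorial bookkeeping" leaves the lemma unproved.

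The gap is not only one of detail: your second step asserts that effectivity of $L$ "succeeds precisely because each $a_i<1$", and your discussion of borderline cases claims the residual inequalities are "complementary, hence simultaneously satisfied, so an admissible choice of parameters still exists". That is not what happens in this lemma. In several regimes the first-choice $L$ is simply not effective for any admissible parameters, and the paper must switch to a genuinely different configuration: e.g.\ for $H=-K_X+\lambda_1\hat E_1+\lambda\varphi(E)$ with $E$ through the $A_1$-point, the regime $\lambda\ge 1+\lambda_1$ forces the relations of Figure~16 (rewriting $\lambda E+\tfrac12\lambda D_5$ and $\lambda_1\tilde E_1$ against a new set of $(-1)$-curves), a $0$-curve $M$ through $D_1\cap D_2$ rather than $D_2\cap D_3$, and a fibration built after two blow-ups at $D_1\cap D_2$; similarly the type-$C$ regime $\mu\ge 1$ requires the configuration of Figure~25, and the case $\lambda_1+\lambda_2=1$ requires Figure~15. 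Finding these alternative curve systems, their linear equivalences, and the corresponding fibrations is new geometric input in each borderline regime, not a formal consequence of your three-step machine, so the proposal as it stands has a genuine gap exactly where the proof is hardest. (A minor additional point: the $r_H=0$ case is not "done by hypothesis" of the lemma, which assumes nothing about a $(-K_X)$-polar cylinder; it follows from Theorem~\ref{theorem:cylinders:-K_S}, since a degree-one du Val del Pezzo with an $A_4$ point is not in list I.)
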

\begin{proof}
Let $\varphi\colon\tilde{X}\rightarrow X$ be the minimal resolution of singularities of $X$ and let $D=\sum_{i=1}^5 D_i$ be the exceptional divisor of $\varphi$. We may assume that $D_1,D_2,D_3,D_4$ correspond to the singular point of type $A_4$, $D_5$ correspond to the singular point of type $A_1$. Assume that $H=-K_X+\lambda E$, where $E$ ia a $(-1)$-curve and $\lambda<1$. We have the following configuration on $\tilde{X}$.\\
\begin{tikzpicture}
\draw  (0.5,0) -- (0.5,3) (0,0.5) -- (8,0.5) (7.5,0)--(7.5,4) (7.2,3.5)--(10,3.5) (5,1.5)--(6,1.5); \draw [dashed] (3,0)--(3,3) (4,0)--(4,3) (7,1.5)--(10,1.5) (5.5,0)--(5.5,2) ;
\draw (1,0.5) node [above] {$D_2$} (0.5,3) node [above] {$D_1$} (7.5,4) node [above] {$D_3$}  (10,3.5) node [right] {$D_4$} (6,1.5) node [right] {$D_5$}  (10,1.5) node [right] {$E_1$} (5.5,2) node [above] {$E_4$} (3,3) node [above] {$E_2$} (4,3) node [above] {$E_3$};
\end{tikzpicture}$$\text{Figure 10}$$
where dotted lines are $(-1)$-curves, $D_1,\ldots,D_5$ are $(-2)$-curves. Moreover, $E\cdot E_1=E\cdot E_2=0$, $E\cdot E_3=E\cdot E_4=1$. We have \begin{gather*}\varphi^*(H)\equiv L=aD_1+2aD_2+2bD_3+bD_4+(2b-1)E_1+(2a-1)E_2+\\
+(2a-1-\lambda)E_3+(2a-1-\lambda)D_5+2(2a-1-\lambda)E_5+(3b-1-2\lambda)M,\end{gather*} where $a+b=1+\lambda$, $M$ is a $(0)$-curve such that $M$ pass through the intersection point of $D_2$ and $D_3$ and $M\cdot D_2=M\cdot D_3=1$. Since $\lambda<1$, we see that there exist $a$ and $b$ such that $L$ is an effective divisor. Let $Y_1\rightarrow\tilde{X}$ be the blow up of the intersection point of $D_2$ and $D_3$, and $N_1$ be the exceptional divisor. Let $Y_2\rightarrow Y_1$ be the blow up of the intersection point of $N_1$ and the proper transform of $D_2$, and $N_2$ be the exceptional divisor. Let $Y_3\rightarrow Y_2$ be the blow up of the intersection point of $N_2$ and the proper transform of $D_2$, and $N_3$ be the exceptional divisor. Let $Y_4\rightarrow Y_3$ be the blow up of the intersection point of $N_3$ and the proper transform of $N_2$, and $F$ be the exceptional divisor.
We obtain a surface $Y_4$ with the following configuration\\
\begin{tikzpicture}
\draw  (3,0) -- (0,2) (0.5,1.5) -- (0.5,4) (0.2,3.2)--(2,5) (1,4.8)--(4,4.8) (6,0)--(8,3) (7.5,1.3)--(7.5,7) (8,6.5)--(5,6.5) (8.5,3)--(8.5,4.5); \draw [dashed] (2,0.5) -- (7,0.5) (0,3)--(3,3) (1.8,3.2)--(0,5) (6.8,3.5)--(9,3.5) (7,5)--(10,5) (7,6)--(10,6);
\draw (7.5,1.3) node [right] {$\tilde{D}_2$} (2,5) node [above] {$\tilde{D}_3$} (3.5,4.8) node [above] {$\tilde{D}_4$}  (5,6.5) node [below] {$\tilde{D}_1$} (0,5) node [above] {$\tilde{E}_1$} (9,3.5) node [right] {$\tilde{E}_2$} (3,3) node [right] {$\tilde{M}$} (7,0.5) node [right] {$F$} (0.5,2.2) node [right] {$\tilde{N}_1$} (1,1.5) node [right] {$\tilde{N}_2$} (6.8,1.1) node [left] {$\tilde{N}_3$} (8.6,3) node [below] {$\tilde{D}_5$} (10,5) node [right] {$\tilde{E}_3$} (10,6) node [right] {$\tilde{E}_4$};
\end{tikzpicture}\\ where $\tilde{D}_1,\ldots,\tilde{D}_5$ are the proper transforms of $D_1,\ldots, D_5$, $\tilde{E}_1, \tilde{E}_2, \tilde{E}_3, \tilde{E}_4$ are the proper transforms of $E_1,E_2,E_3,E_4$, $\tilde{N}_1, \tilde{N}_2, \tilde{N}_3$ are the proper transforms of $N_1, N_2, N_3$, $\tilde{M}$ is the proper transform of $M$. Note that there exists $\PP^1$-fibration $g\colon Y_4\rightarrow\PP^1$ such that $g$ has only two singular fibers $C_1, C_2$ and $$C_1=\tilde{D}_1+\tilde{N}_3+2\tilde{D}_2+2\tilde{D}_5+2\tilde{E}_3+2\tilde{E}_4+4\tilde{E}_2,$$ $$C_2=\tilde{D}_4+\tilde{N}_2+2\tilde{D}_3+2\tilde{E}_1+3\tilde{N}_1+3\tilde{M}.$$
On the other hand, $$\tilde{X}\backslash\Supp(L)\cong\FF_1\backslash(K+F_1+F_2),$$ where $K$ is a unique $(-1)$-curve and $F_1, F_2$ are fibers. Then $X$ has an $H$-polar cylinder.
Assume that $H=-K_X+\lambda \varphi(E)$, where $E$ ia a $(-1)$-curve on $\tilde{X}$ that meets only $D_5$ and does not meet $D_1,D_2,D_3,D_4$. Note that $\lambda<2$.  We have the following configuration on $\tilde{X}$.\\
\begin{tikzpicture}
\draw  (0.5,0) -- (0.5,3) (0,0.5) -- (8,0.5) (7.5,0)--(7.5,4) (7.2,3.5)--(10,3.5) (3,1.5)--(5,1.5); \draw [dashed] (7,1.5)--(10,1.5) (7,2)--(10,2) (7,2.5)--(10,2.5) (4,0)--(4,2) ;
\draw (1,0.5) node [above] {$D_2$} (0.5,3) node [above] {$D_1$} (7.5,4) node [above] {$D_3$}  (10,3.5) node [right] {$D_4$} (5,1.5) node [right] {$D_5$}  (10,1.5) node [right] {$E_1$} (4,2) node [above] {$E_4$} (10,2) node [right] {$E_2$} (10,2.5) node [right] {$E_3$};
\end{tikzpicture}$$\text{Figure 11}$$
where dotted lines are $(-1)$-curves, $D_1,\ldots,D_5$ are $(-2)$-curves. Moreover, $E\cdot E_1=E\cdot E_4=0$, $E\cdot E_2=E\cdot E_3=1$. We have \begin{gather*}\varphi^*(H)\equiv L=aD_1+2aD_2+2bD_3+bD_4+(2a-1-\frac{1}{2}\lambda)D_5+\\
+2(2a-1-\frac{1}{2}\lambda)E_4+(2b-1-\lambda)E_2+(2b-1-\lambda)E_3+\\
+2(2b-1)E_1+(a-\lambda)M,\end{gather*} where $a+b=1+\lambda$, $M$ is a $(0)$-curve such that $M$ pass through the intersection point of $D_2$ and $D_3$ and $M\cdot D_2=M\cdot D_3=1$. If $\lambda<1$, then there exist $a$ and $b$ such that $L$ is an effective divisor. As above, $$\tilde{X}\backslash\Supp(L)\cong\FF_1\backslash(K+F_1+F_2),$$ where $K$ is a unique $(-1)$-curve and $F_1, F_2$ are fibers. Then $X$ has an $H$-polar cylinder. So, we may assume that $\lambda\geq 1$. We have the following configuration on $\tilde{X}$.\\
\begin{tikzpicture}
\draw  (0.5,0) -- (0.5,3) (0,0.5) -- (8,0.5) (7.5,0)--(7.5,4) (7.2,3.5)--(11,3.5) (3,1.5)--(5,1.5); \draw [dashed] (7,1.5)--(9,1.5) (7,2)--(9,2) (7,2.5)--(9,2.5) (4,0)--(4,2) (10.5,4)--(10.5,2);
\draw (1,0.5) node [above] {$D_2$} (0.5,3) node [above] {$D_1$} (7.5,4) node [above] {$D_3$}  (11,3.5) node [right] {$D_4$} (5,1.5) node [right] {$D_5$}  (9,1.5) node [right] {$E_1$} (4,2) node [above] {$E_4$} (9,2) node [right] {$E_2$} (9,2.5) node [right] {$E_3$} (10.5,2) node [right] {$E_5$};
\end{tikzpicture}$$\text{Figure 12}$$
where dotted lines are $(-1)$-curves, $D_1,\ldots,D_5$ are $(-2)$-curves. Moreover, $E\cdot E_3=E\cdot E_4=E\cdot E_5=0$, $E\cdot E_1=E\cdot E_2=1$. We have \begin{gather*}\varphi^*(H)\equiv L=aD_1+2aD_2+(b+1)D_3+bD_4+(b-1)E_5+bE_3+\\
+(b-\lambda)E_2+(b-\lambda)E_1+2(2a-1-\frac{1}{2}\lambda)E_4+(2a-1-\frac{1}{2}\lambda)D_5.\end{gather*} Note that if $\lambda>\frac{2}{3}$ then there exist $a$ an $b$ such that $L$ is an effective divisor. Let $Y_1\rightarrow\tilde{X}$ be the blow up of intersection point of $D_2$ and $D_3$, and let $N$ be the exceptional divisor. Let $Y_2\rightarrow Y_1$ be the blow up of intersection point of $N$ and the proper transform of $D_3$, and let $F$ be the exceptional divisor. Note that there exists $\PP^1$-fibration $g\colon Y_2\rightarrow\PP^1$ such that $g$ has only two singular fibers $C_1, C_2$ and $$C_1=\tilde{D}_1+\tilde{N}+2\tilde{D}_2+2\tilde{D}_5+4\tilde{E}_4,$$ $$C_2=\tilde{D}_3+\tilde{D}_4+\tilde{E}_1+\tilde{E}_2+\tilde{E}_3+\tilde{E}_5,$$ where $\tilde{D}_1,\ldots,\tilde{D}_5$ are the proper transforms of $D_1,\ldots, D_5$, $\tilde{E}_1,\ldots, \tilde{E}_5$ are the proper transforms of $E_1,\ldots,E_4$, $\tilde{N}$ is the proper transform of $N$.
On the other hand, $$\tilde{X}\backslash\Supp(L)\cong\FF_1\backslash(K+F_1+F_2),$$ where $K$ is a unique $(-1)$-curve and $F_1, F_2$ are fibers. Then $X$ has an $H$-polar cylinder.
Assume that $H=-K_X+\lambda \varphi(E)$, where $E$ ia a $(-1)$-curve on $\tilde{X}$ that meets only $D_4$ and does not meet $D_1,D_2,D_3,D_5$. Note that $\lambda<5$. We have the configuration as in Figure 12. We may assume that $E=E_5$. We have \begin{gather*}\varphi^*(H)\equiv L=(a-1+\frac{1}{5}\lambda)D_1+(2a-2+\frac{2}{5}\lambda)D_2+ (2a-3)D_5+\\
+(4a-6)E_4+(b+\frac{3}{5}\lambda)D_3+(b-1+\frac{4}{5}\lambda)D_4+(b-1)E_1+\\
+(b-1)E_2+(b-1)E_3+(b-2+\lambda)E_5,\end{gather*} where $a+b=3$. If $\lambda>\frac{1}{2}$ then there exist $a$ and $b$ such that $L$ is an effective divisor. As above, $X$ has an $H$-polar cylinder. So, we may assume that $\lambda\leq\frac{1}{2}$. We have the configuration as in Figure 11. We obtain \begin{gather*}\varphi^*(H)\equiv L=aD_1+2aD_2+2bD_3+bD_4+(2a-1-\frac{2}{5}\lambda)D_5+\\
+2(2a-1-\frac{2}{5}\lambda)E_4+(2b-1-\frac{3}{5}\lambda)E_1+(2b-1-\frac{3}{5}\lambda)E_2+\\
+(2b-1-\frac{3}{5}\lambda)E_3+(a-\frac{6}{5}\lambda)M,\end{gather*} where $a+b=1+\lambda$, $M$ is a $(0)$-curve such that $M$ pass through the intersection point of $D_2$ and $D_3$ and $M\cdot D_2=M\cdot D_3=1$. If $\lambda<1$, then there exist $a$ and $b$ such that $L$ is an effective divisor. As above, $$\tilde{X}\backslash\Supp(L)\cong\FF_1\backslash(K+F_1+F_2),$$ where $K$ is a unique $(-1)$-curve and $F_1, F_2$ are fibers. Then $X$ has an $H$-polar cylinder. In the same way, we can construct cylinders when $E$ meets $D_1$ and does not meet $D_2,\ldots,D_5$. We omit this case.

Assume that $H=-K_X+\lambda_1 \hat{E}_1+\lambda_2 \hat{E}_2$, where $\hat{E}_1,\hat{E}_2$ are $(-1)$-curves and $0<\lambda_2\leq\lambda_1<1$. Put $\tilde{E}_1, \tilde{E}_2$ are the proper transform of $\hat{E}_1,\hat{E}_2$. We have the following configuration on $\tilde{X}$.\\
\begin{tikzpicture}
\draw  (0.5,0) -- (0.5,5) (0,0.5) -- (6,0.5) (5.5,0)--(5.5,5) (5.2,4.5)--(10,4.5) (1.5,3.8)--(2.7,5); \draw [dashed] (0,4)--(2,4) (4,0) -- (4,3) (3.8,2)--(5,4) (3,0)--(3,3) (2.8,2)--(4,4) (5,1.5)--(7,1.5) (9,5)--(9,2);
\draw (1,0.5) node [above] {$D_3$} (0.5,5) node [above] {$D_4$} (5.5,5) node [above] {$D_2$}  (10,4.5) node [right] {$D_1$} (4,3) node [above] {$E_2$} (3,3) node [above] {$E_1$} (7,1.5) node [right] {$E_3$} (2.7,5) node [right] {$D_5$} (2,4) node [right] {$E_5$} (9,2) node [right] {$E_4$} (5,4) node [above] {$\tilde{E}_2$} (4,4) node [above] {$\tilde{E}_1$};
\end{tikzpicture}$$\text{Figure 13}$$ where dotted lines are $(-1)$-curves, $D_1,\ldots,D_5$ are $(-2)$-curves. Moreover, $\tilde{E}_1, \tilde{E}_2$ are the proper transform of $\hat{E}_1,\hat{E}_2$. We have \begin{gather*}\varphi^*(H)\equiv L=2D_3+aD_2+(a-1)D_1+(a-2)E_4+(a-1)E_3+\\
+bD_4+(b-1)D_5+(2b-2)E_5+c_1E_1+(c_1-1+\lambda_1)\tilde{E}_1+\\
+c_2E_2+(c_2-1+\lambda_2)\tilde{E}_2,\end{gather*} where $a+b+c_1+c_2=4$. If $\lambda_1+\lambda_2>1$ then there exist $a$ and $b$ such that $L$ is an effective divisor. As above, $X$ has an $H$-polar cylinder. So, we may assume that $\lambda_1+\lambda_2\leq 1$. We have the following configuration on $\tilde{X}$.\\
\begin{tikzpicture}
\draw  (0.5,0) -- (0.5,3) (0,0.5) -- (8,0.5) (7.5,0)--(7.5,4) (7.2,3.5)--(10,3.5) (5,1.5)--(6,1.5); \draw [dashed] (4,0)--(4,3) (7,1.5)--(10,1.5) (7,2)--(10,2) (5.5,0)--(5.5,2) ;
\draw (1,0.5) node [above] {$D_2$} (0.5,3) node [above] {$D_1$} (7.5,4) node [above] {$D_3$}  (10,3.5) node [right] {$D_4$} (6,1.5) node [right] {$D_5$}  (10,1.5) node [right] {$E_1$} (5.5,2) node [above] {$E_4$} (10,2) node [right] {$E_2$} (4,3) node [above] {$E_3$};
\end{tikzpicture}$$\text{Figure 14}$$ where dotted lines are $(-1)$-curves, $D_1,\ldots,D_5$ are $(-2)$-curves. Moreover, $\tilde{E}_1\cdot E_1=\tilde{E}_2\cdot E_2=\tilde{E}_1\cdot E_4=\tilde{E}_2\cdot E_4=1$, $\tilde{E}_1\cdot E_2=\tilde{E}_1\cdot E_2=\tilde{E}_1\cdot E_3=\tilde{E}_2\cdot E_3=0$. We have \begin{gather*}\varphi^*(H)\equiv L=aD_1+2aD_2+(2a-1)E_3+(2a-1-\lambda_1-\lambda_2)D_5+\\
+2(2a-1-\lambda_1-\lambda_2)E_4+bD_4+2bD_3+(2b-1-\lambda_1)E_1+\\
+(2b-1-\lambda_2)E_2+(b-\lambda_1-\lambda_2)M,\end{gather*} where $a+b=1+\lambda_1+\lambda_2$, $M$ is a $(0)$-curve such that $M$ pass through the intersection point of $D_2$ and $D_3$ and $M\cdot D_2=M\cdot D_3=1$. Assume that $\lambda_1+\lambda_2<1$. We see that there exist $a$ and $b$ such that $L$ is an effective divisor. As above, $X$ has an $H$-polar cylinder. So, we may assume that $\lambda_1+\lambda_2=1$. We have the following configuration on $\tilde{X}$.\\ \begin{tikzpicture}
\draw  (0.5,0) -- (0.5,3) (0,0.5) -- (8,0.5) (7.5,0)--(7.5,4) (7.2,3.5)--(10,3.5) (5,1.5)--(6,1.5); \draw [dashed] (0,2)--(3,2)(4,0)--(4,3) (7,1.5)--(10,1.5) (7,2)--(10,2) (5.5,0)--(5.5,2) ;
\draw (1,0.5) node [above] {$D_2$} (0.5,3) node [above] {$D_1$} (7.5,4) node [above] {$D_3$}  (10,3.5) node [right] {$D_4$} (6,1.5) node [right] {$D_5$}  (10,1.5) node [right] {$E_1$} (5.5,2) node [above] {$E_4$} (10,2) node [right] {$E_2$} (4,3) node [above] {$E_3$} (3,2) node [right] {$E_5$};
\end{tikzpicture}$$\text{Figure 15}$$ where dotted lines are $(-1)$-curves, $D_1,\ldots,D_5$ are $(-2)$-curves. Moreover, $\tilde{E}_1\cdot E_5=\tilde{E}_2\cdot E_5=0$. We have \begin{gather*}\varphi^*(H)\equiv L=aD_1+(a-1)E_5+(a+1)D_2+aE_3+(a-\lambda_1-\lambda_2)D_5+\\
+2(a-\lambda_1-\lambda_2)E_4+bD_4+2bD_3+(2b-1-\lambda_1)E_1+(2b-1-\lambda_2)E_2,\end{gather*} where $a+b=1+\lambda_1+\lambda_2$. Since $\lambda_1+\lambda_2=1$, we see that there exist $a$ and $b$ such that $L$ is an effective divisor. As above, $X$ has an $H$-polar cylinder.

Assume that $H=-K_X+\lambda_1 \hat{E}_1+\lambda \varphi(E)$, where $E$ ia a $(-1)$-curve on $\tilde{X}$ that meets only $D_5$ and does not meet $D_1,D_2,D_3,D_4$. We have the configuration as in Figure 14. Moreover, $E\cdot E_1=E\cdot E_3=1$, $E\cdot E_2=E\cdot E_4=0$, $\tilde{E}_1\cdot E_4=\tilde{E}_1\cdot E_1=1$, $\tilde{E}_1\cdot E_3=\tilde{E}_1\cdot E_2=0$. We have \begin{gather*}\varphi^*(H)\equiv L=aD_1+2aD_2+(2a-1-\lambda)E_3+(2a-1-\lambda_1-\frac{1}{2}\lambda)D_5+\\
+2(2a-1-\lambda_1-\frac{1}{2}\lambda)E_4+bD_4+2bD_3+(2b-1-\lambda_1-\lambda)E_1+\\
+(2b-1)E_2+(b-\lambda_1-\lambda)M,\end{gather*} where $a+b=1+\lambda_1+\lambda_2$, $M$ is a $(0)$-curve such that $M$ pass through the intersection point of $D_2$ and $D_3$ and $M\cdot D_2=M\cdot D_3=1$. Assume that $\lambda>2\lambda_1$ and $\lambda<1$. We see that there exist $a$ and $b$ such that $L$ is an effective divisor. As above, $X$ has an $H$-polar cylinder. Assume that $\lambda\leq2\lambda_1$ and $2\lambda_1+\lambda<2$. We see that there exist $a$ and $b$ such that $L$ is an effective divisor. As above, $X$ has an $H$-polar cylinder. Hence, if $\lambda+\lambda_1\leq 1$, then $X$ has an $H$-polar cylinder. So, we may assume that $\lambda+\lambda_1>1$. We have the configuration as in Figure 15. Note that $E\cdot E_5=\tilde{E}_1\cdot E_5=0$. We have \begin{gather*}\varphi^*(H)\equiv L=aD_1+(a-1)E_5+(a+1)D_2+(a-\lambda)E_3+\\
+(a-\lambda_1-\frac{1}{2}\lambda)D_5+2(a-\lambda_1-\frac{1}{2}\lambda)E_4+bD_4+2bD_3+\\
+(2b-1-\lambda_1-\lambda)E_1+(2b-1)E_2,\end{gather*} where $a+b=1+\lambda_1+\lambda$. Note that $L$ is an effective divisor if $$\begin{cases} a>\lambda\\
a>\lambda_1+\frac{1}{2}\lambda\\
b>\frac{1}{2}(1+\lambda+\lambda_1).\end{cases}$$ Assume that $\lambda\leq 2\lambda_1$. Since $\lambda_1<1$ we see that there exist $a$ and $b$ such that $L$ is an effective divisor. As above, $X$ has an $H$-polar cylinder. So, we may assume that $\lambda> 2\lambda_1$. We obtain if $\lambda<1+\lambda_1$ then there exist $a$ and $b$ such that $L$ is an effective divisor. Hence, we may assume that $\lambda\geq 1+\lambda_1$. We have the following configuration on $\tilde{X}$.
\begin{tikzpicture}
\draw  (0,0.5) -- (10,0.5) (8.5,0) -- (10.5,2) (10,1)--(10,5) (10.5,4)--(8.5,6) (0.5,2)--(0.5,5); \draw [dashed] (2,0) -- (0,3) (0,4.5)--(3,4.5) (6,0) -- (4,3) (4.5,2)--(4.5,5) (8,0) -- (6,3) (6.5,2)--(6.5,5) (9,3)--(12,3) (9.8,0.7)--(9,2);
\draw (1.5,1.5) node [above] {$E_3$} (0.5,5) node [above] {$D_5$} (3,4.5) node [right] {$E$} (5.5,1.5) node [above] {$E_1$} (4.5,5) node [above] {$\tilde{E}_1$} (7.5,1.5) node [above] {$E_2$} (6.5,5) node [above] {$\tilde{E}_2$} (12,3) node [right] {$E_4$} (0,0.5) node [above] {$D_1$} (10.5,2) node [right] {$D_2$} (10,5) node [above] {$D_3$} (8.5,6) node [above] {$D_4$} (9,2) node [left] {$E_5$} ;
\end{tikzpicture}$$\text{Figure 16}.$$ where dotted lines are $(-1)$-curves, $D_1,\ldots,D_5$ are $(-2)$-curves. We have $$\lambda E+\frac{1}{2}\lambda D_5\equiv -\lambda E_3-\frac{1}{2}\lambda D_5+\lambda D_2+\lambda D_4+2\lambda D_3+2\lambda E_4,$$ $$\lambda_1\tilde{E}_1\equiv-\lambda_1 E_1+\lambda_1 D_2+\lambda_1 D_4+2\lambda_1 D_3+2\lambda_1 E_4.$$ We obtain \begin{gather*}\varphi^*(H)\equiv L=aD_1+(a-1-\lambda_1)E_1+(a-1)E_2+(a-1-\frac{1}{2}\lambda)D_5+\\
+(2a-2-\lambda)E_3+(b+\lambda+\lambda_1)D_2+(b-1+\lambda+\lambda_1)D_4+\\
+(2b-2+2\lambda+2\lambda_1)D_3+(2b-3+2\lambda+2\lambda_1)E_4+bM,\end{gather*} where $a+b=2$, $M$ is a $(0)$-curve such that $M$ pass through the intersection point of $D_1$ and $D_2$ and $M\cdot D_1=M\cdot D_2=1$. Since $\lambda\geq 1+\lambda_1$, we see that there exist $a$ and $b$ such that $L$ is an effective divisor. Let $Y_1\rightarrow\tilde{X}$ be the blow up of intersection point of $D_1$ and $D_2$, and let $N$ be the exceptional divisor. Let $Y_2\rightarrow Y_1$ be the blow up of intersection point of $N$ and the proper transform of $D_1$, and let $F$ be the exceptional divisor. Note that there exists $\PP^1$-fibration $g\colon Y_2\rightarrow\PP^1$ such that $g$ has only two singular fibers $C_1, C_2$ and $$C_1=\bar{D}_1+\bar{E}_1+\bar{E}_2+\bar{D}_5+2\bar{E}_3,$$ $$C_2=\bar{N}+\bar{M}+\bar{D}_2+\bar{D}_4+2\bar{D}_3+2\bar{E}_4,$$ where $\bar{D}_1,\ldots,\bar{D}_5$ are the proper transforms of $D_1,\ldots, D_5$, $\bar{E}_1,\ldots, \bar{E}_4$ are the proper transforms of $E_1,\ldots,E_4$, $\bar{N}$ is the proper transform of $N$. On the other hand, $$\tilde{X}\backslash\Supp(L)\cong\FF_1\backslash(K+F_1+F_2),$$ where $K$ is a unique $(-1)$-curve and $F_1, F_2$ are fibers. Then $X$ has an $H$-polar cylinder.

Assume that $H=-K_X+\lambda_1 \hat{E}_1 +\lambda_2 \hat{E}_2+\lambda \varphi(E)$, where $E$ ia a $(-1)$-curve on $\tilde{X}$ that meets only $D_1$ and does not meet $D_2,D_3,D_4,D_5$, $\hat{E}_1, \hat{E}_2$ are $(-1)$-curves and $0\leq\lambda_2\leq\lambda_1<1$, $0<\lambda<5$. Moreover, $\lambda_1>0$. We have the configuration as in Figure 14. We have \begin{gather*}\varphi^*(H)\equiv L=aD_1+2aD_2+(2a-1-\frac{3}{5}\lambda)E_3+\\
+(2a-1-\lambda_1-\lambda_2-\frac{3}{5}\lambda)D_5+2(2a-1-\lambda_1-\lambda_2-\frac{3}{5}\lambda)E_4+bD_4+2bD_3+\\
+(2b-1-\lambda_1-\frac{2}{5}\lambda)E_1+(2b-1-\lambda_2-\frac{2}{5}\lambda)E_2+(b-\lambda_1-\lambda_2-\frac{6}{5}\lambda)M,\end{gather*} where $a+b=1+\lambda_1+\lambda_2+\lambda$, $M$ is a $(0)$-curve such that $M$ pass through the intersection point of $D_2$ and $D_3$ and $M\cdot D_2=M\cdot D_3=1$. Assume that $\lambda_1+\lambda_2+\lambda<1$. Then there exist $a$ and $b$ such that $L$ is an effective divisor. As above, $X$ has an $H$-polar cylinder. So, we may assume that $\lambda_1+\lambda_2+\lambda\geq 1$. We have the configuration as in Figure 15. We may assume that $E=E_5$. We have \begin{gather*}\varphi^*(H)\equiv L=(a+\frac{4}{5}\lambda)D_1+(a-1+\lambda)E_5+(a+1+\frac{3}{5}\lambda)D_2+aE_3+\\
+(a-\lambda_1-\lambda_2)D_5+2(a-\lambda_1-\lambda_2)E_4+(b+\frac{1}{5}\lambda)D_4+(2b+\frac{2}{5}\lambda)D_3+\\
+(2b-1-\lambda_1)E_1+(2b-1-\lambda_2)E_2,\end{gather*} where $a+b=1+\lambda_1+\lambda_2$. Since $\lambda_1+\lambda_2+\lambda\geq 1$ and $\lambda_1<1$, we see that there exist $a$ and $b$ such that $L$ is an effective divisor. As above, $X$ has an $H$-polar cylinder.

Assume that $H=-K_X+\lambda_1 \hat{E}_1+\lambda_2 \varphi(\hat{E}_2) +\lambda \varphi(\hat{E}_3)$, where $\hat{E}_3$ ia a $(-1)$-curve on $\tilde{X}$ that meets only $D_1$ and does not meet $D_2,D_3,D_4,D_5$, $\hat{E}_2$ ia a $(-1)$-curve on $\tilde{X}$ that meets only $D_5$ and does not meet $D_1,D_2,D_3,D_4$, $\hat{E}_1$ ia a $(-1)$-curve on $X$. Put $\tilde{E}_1$ is the proper transform of $\hat{E}_1$. Note that $0\leq\lambda_1<1$, $0\leq\lambda_2<2$, $0<\lambda_3<5$. We may assume that $\lambda_2>0$. We have the configuration as in Figure 14. We have \begin{gather*}\varphi^*(H)\equiv L=aD_1+2aD_2+(2a-1-\lambda_2-\frac{3}{5}\lambda)E_3+\\
+(2a-1-\lambda_1-\frac{1}{2}\lambda_2-\frac{3}{5}\lambda)D_5+2(2a-1-\lambda_1-\frac{1}{2}\lambda_2-\frac{3}{5}\lambda)E_4+\\
+bD_4+2bD_3+(2b-1-\lambda_1-\lambda_2-\frac{2}{5}\lambda)E_1+(2b-1-\frac{2}{5}\lambda)E_2+\\
+(b-\lambda_1-\lambda_2-\frac{6}{5}\lambda)M,\end{gather*} where $a+b=1+\lambda_1+\lambda_2+\lambda$, $M$ is a $(0)$-curve such that $M$ pass through the intersection point of $D_2$ and $D_3$ and $M\cdot D_2=M\cdot D_3=1$. Assume that $\lambda_2<2\lambda_1$ and $\lambda_1+\frac{1}{2}\lambda_2+\lambda<1$. Then there exist $a$ and $b$ such that $L$ is an effective divisor. As above, $X$ has an $H$-polar cylinder. Assume that $\lambda_2\geq2\lambda_1$ and $\lambda_2+\lambda<1$. Then there exist $a$ and $b$ such that $L$ is an effective divisor. As above, $X$ has an $H$-polar cylinder. We have the configuration as in Figure 15. We may assume that $\hat{E}_3=E_5$. We have \begin{gather*}\varphi^*(H)\equiv L=(a+\frac{4}{5}\lambda)D_1+(a-1+\lambda)E_5+(a+1+\frac{3}{5}\lambda)D_2+\\
+(a-\lambda_2)E_3+(a-\lambda_1-\frac{1}{2}\lambda_2)D_5+2(a-\lambda_1-\frac{1}{2}\lambda_2)E_4+(b+\frac{1}{5}\lambda)D_4+\\
+(2b+\frac{2}{5}\lambda)D_3+(2b-1-\lambda_1-\lambda_2)E_1+(2b-1)E_2,\end{gather*} where $a+b=1+\lambda_1+\lambda_2$. Assume that $\lambda_2<2\lambda_1$. We may assume $\lambda_1+\frac{1}{2}\lambda_2+\lambda\geq 1$. Then $L$ is an effective divisor if and only if $a>\lambda_1+\frac{1}{2}\lambda_2$ and $b>\frac{1}{2}+\frac{1}{2}\lambda_1+\frac{1}{2}\lambda_2$. Since $\lambda_1<1$, we see that there exist $a$ and $b$ such that $L$ is an effective divisor. As above, $X$ has an $H$-polar cylinder. Assume that $\lambda_2\geq 2\lambda_1$. We may assume $\lambda_2+\lambda\geq 1$. Then $L$ is an effective divisor if and only if $a>\lambda_2$ and $b>\frac{1}{2}+\frac{1}{2}\lambda_1+\frac{1}{2}\lambda_2$. Therefore, if $\lambda_2<1+\lambda_1$, then there exist $a$ and $b$ such that $L$ is an effective divisor. As above, $X$ has an $H$-polar cylinder. So, we may assume that $\lambda_2\geq 1+\lambda_1$. Assume that $\lambda_1>0$. We have the configuration as in Figure 16. We may assume that $\hat{E}_2=E$, $\hat{E}_3=E_2$. Note that $$\lambda_2 E_2+\frac{1}{2}\lambda_2 D_5\equiv -\lambda_2 E_3-\frac{1}{2}\lambda_2 D_5+\lambda_2 D_2+\lambda_2 D_4+2\lambda_2 D_3+2\lambda_2 E_4,$$ $$\lambda_1\tilde{E}_1\equiv-\lambda_1 E_1+\lambda_1 D_2+\lambda_1 D_4+2\lambda_1 D_3+2\lambda_1 E_4.$$ We obtain \begin{gather*}\varphi^*(H)\equiv L=(a+\frac{4}{5}\lambda)D_1+(a-1-\lambda_1)E_1+(a-1+\lambda)E_2+\\
+(a-1-\frac{1}{2}\lambda_2)D_5+(2a-2-\lambda_2)E_3+(b+\lambda_1+\lambda_2+\frac{3}{5}\lambda)D_2+\\
+(b-1+\lambda_1+\lambda_2+\frac{1}{5}\lambda)D_4+(2b-2+2\lambda_1+2\lambda_2+\frac{2}{5}\lambda)D_3+\\
+(2b-3+2\lambda_1+2\lambda_2)E_4+bM,\end{gather*} where $a+b=2$, $M$ is a $(0)$-curve such that $M$ pass through the intersection point of $D_1$ and $D_2$ and $M\cdot D_1=M\cdot D_2=1$. Since $\lambda_1>0$ and $\lambda_2>1$, we see that there exist $a$ and $b$ such that $L$ is an effective divisor. As above, $X$ has an $H$-polar cylinder. So, we may assume that $\lambda_1=0$. We have the following configuration on $\tilde{X}$.\\
\begin{tikzpicture}
\draw  (0.5,0) -- (0.5,3) (0,0.5) -- (8,0.5) (7.5,0)--(7.5,4) (7.2,3.5)--(11,3.5) (3,1.5)--(5,1.5); \draw [dashed] (7,1.5)--(9,1.5) (7,2)--(9,2) (7,2.5)--(9,2.5) (4,0)--(4,2) (10.5,4)--(10.5,2);
\draw (1,0.5) node [above] {$D_3$} (0.5,3) node [above] {$D_4$} (7.5,4) node [above] {$D_2$}  (11,3.5) node [right] {$D_1$} (5,1.5) node [right] {$D_5$}  (9,1.5) node [right] {$E_1$} (4,2) node [above] {$E_4$} (9,2) node [right] {$E_2$} (9,2.5) node [right] {$E_3$} (10.5,2) node [right] {$E_5$};
\end{tikzpicture}$$\text{Figure 17}$$
where dotted lines are $(-1)$-curves, $D_1,\ldots,D_5$ are $(-2)$-curves. We may assume that $\hat{E}_3=E_5$. We have \begin{gather*}\varphi^*(H)\equiv L=(a+\frac{4}{5}\lambda)D_1+(a-1+\lambda)E_5+(a+1+\frac{3}{5}\lambda)D_2+\\
+(a-\lambda_2)E_1+(a-\lambda_2)E_2+aE_3+\\
+(b+\frac{1}{5}\lambda)D_4+(2b+\frac{2}{5}\lambda)D_3+(2b-1-\frac{1}{2}\lambda_2)D_5+(4b-2-\lambda_2)E_4,\end{gather*} where $a+b=1+\lambda_2$. Since $1\leq \lambda_2<2$, we see that there exist $a$ and $b$ such that $L$ is an effective divisor. As above, $X$ has an $H$-polar cylinder.

Assume that $H=-K_X+\lambda_1 \hat{E}_1 +\lambda_2 \hat{E}_2+\lambda \varphi(E)$, where $E$ ia a $(-1)$-curve on $\tilde{X}$ that meets only $D_5$ and does not meet $D_1,D_2,D_3,D_4$, $\hat{E}_1, \hat{E}_2$ are $(-1)$-curves and $0<\lambda_2\leq\lambda_1<1$, $0<\lambda<2$. Put $\tilde{E}_1, \tilde{E}_2$ are the proper transforms of $\hat{E}_1, \hat{E}_2$. We have the configuration as in Figure 10. Moreover, $E_1\cdot\tilde{E}_1=E_1\cdot\tilde{E}_2=E_1\cdot E=E_4\cdot E=0$, $E_2\cdot\tilde{E}_1=E_4\cdot\tilde{E}_1=1$, $E_3\cdot\tilde{E}_1=E_4\cdot\tilde{E}_1=1$, $E_2\cdot\tilde{E}_2=E_3\cdot\tilde{E}_1=0$, $E\cdot E_2=E\cdot E_3=1$. We have \begin{gather*}\varphi^*(H)\equiv L=aD_1+2aD_2+(2a-1-\lambda_1-\lambda)E_2+\\
+(2a-1-\lambda_2-\lambda)E_3+(2a-1-\lambda_1-\lambda_2-\frac{1}{2}\lambda)D_5+\\
+2(2a-1-\lambda_1-\lambda_2-\frac{1}{2}\lambda)E_4+bD_4+2bD_3+(2b-1)E_1+\\
+(3b-1-2\lambda_1-2\lambda_1-2\lambda)M,\end{gather*} where $a+b=1+\lambda_1+\lambda_2+\lambda$, $M$ is a $(0)$-curve such that $M$ pass through the intersection point of $D_2$ and $D_3$ and $M\cdot D_2=M\cdot D_3=1$. Assume that $\lambda>2\lambda_2$. We see that $L$ is an effective divisor if and only if $a>\frac{1}{2}+\frac{1}{2}\lambda_1+\frac{1}{2}\lambda$ and $b>\frac{1}{3}+\frac{2}{3}\lambda_1+\frac{2}{3}\lambda_2+\frac{2}{3}\lambda$. So, $L$ is an effective divisor if $\lambda_1+\lambda<1+2\lambda_2$. Assume that $\lambda\leq 2\lambda_2$. We see that $L$ is an effective divisor if and only if $a>\frac{1}{2}+\frac{1}{2}\lambda_1+\frac{1}{2}\lambda_2+\frac{1}{4}\lambda$ and $b>\frac{1}{3}+\frac{2}{3}\lambda_1+\frac{2}{3}\lambda_2+\frac{2}{3}\lambda$. So, $L$ is an effective divisor if $\lambda_1+\lambda_2<1+\frac{1}{2}\lambda$. Hence, if $\lambda_1+\lambda_2+\lambda\leq 1$, then there exist $a$ and $b$ such that $L$ is an effective divisor. As above, $X$ has an $H$-polar cylinder. So, we may assume that $\lambda_1+\lambda_2+\lambda>1$. Consider the configuration as in Figure 16. We have $$\lambda E+\frac{1}{2}\lambda D_5\equiv -\lambda E_3-\frac{1}{2}\lambda D_5+\lambda D_2+\lambda D_4+2\lambda D_3+2\lambda E_4,$$ $$\lambda_1\tilde{E}_1\equiv-\lambda_1 E_1+\lambda_1 D_2+\lambda_1 D_4+2\lambda_1 D_3+2\lambda_1 E_4,$$ $$\lambda_2\tilde{E}_2\equiv-\lambda_2 E_2+\lambda_2 D_2+\lambda_2 D_4+2\lambda_2 D_3+2\lambda_2 E_4.$$ We obtain \begin{gather*}\varphi^*(H)\equiv L=aD_1+(a-1-\lambda_1)E_1+(a-1-\lambda_2)E_2+(a-1-\frac{1}{2}\lambda)D_5+\\
+(2a-2-\lambda)E_3+(b+\lambda+\lambda_1+\lambda_2)D_2+(b-1+\lambda+\lambda_1+\lambda_2)D_4+\\
+(2b-2+2\lambda+2\lambda_1+2\lambda_2)D_3+(2b-3+2\lambda+2\lambda_1+2\lambda_2)E_4+bM,\end{gather*} where $a+b=2$, $M$ is a $(0)$-curve such that $M$ pass through the intersection point of $D_1$ and $D_2$ and $M\cdot D_1=M\cdot D_2=1$. Assume that $\lambda>2\lambda_1\geq2\lambda_2$. We see that $L$ is an effective divisor if and only if $a>1+\frac{1}{2}\lambda$ and $b>\frac{3}{2}-\lambda_1-\lambda_2-\lambda$. Since $\lambda_1+\lambda_2+\lambda>1$, we see that there exist $a$ and $b$ such that $L$ is an effective divisor. As above, $X$ has an $H$-polar cylinder. Assume that $\lambda\leq 2\lambda_1$. We see that $L$ is an effective divisor if and only if $a>1+\lambda_1$ and $b>\frac{3}{2}-\lambda_1-\lambda_2-\lambda$. Assume that $\lambda_2+\lambda>\frac{1}{2}$. Then there exist $a$ and $b$ such that $L$ is an effective divisor. As above, $X$ has an $H$-polar cylinder. So, we may assume that $\lambda_2+\lambda\leq\frac{1}{2}$. Hence, $\lambda_1>\frac{1}{2}$. Also, consider the configuration as in Figure 16. We obtain \begin{gather*}\varphi^*(H)\equiv L=aD_1+(a-1-\lambda_2)E_2+(a-1-\frac{1}{2}\lambda)D_5+(2a-2-\lambda)E_3+\\
+(b+\lambda+\lambda_2)D_2+(b-1+\lambda+\lambda_2)D_4+(2b-2+2\lambda+2\lambda_2)D_3+\\
+(2b-3+2\lambda+2\lambda_2)E_4+(b-1)E_5+cM+(c-1+\lambda_1)\tilde{E}_1,\end{gather*} where $a+b+c=3$, $M$ is a $(0)$-curve such that $M$ pass through the intersection point of $D_1$ and $D_2$ and $M\cdot D_1=M\cdot D_2=M\cdot \tilde{E}_1=1$. Since $\lambda_1>\frac{1}{2}\geq \lambda+\lambda_2$, we see that there exist $a$ and $b$ such that $L$ is an effective divisor. Let $Y\rightarrow\tilde{X}$ be the blow up of intersection point of $D_1$ and $D_2$, and let $F$ be the exceptional divisor. Note that there exists $\PP^1$-fibration $g\colon Y_2\rightarrow\PP^1$ such that $g$ has only three singular fibers $C_1, C_2, C_3$ and $$C_1=\bar{D}_1+\bar{E}_2+\bar{D}_5+2\bar{E}_3,$$ $$C_2=\bar{D}_2+\bar{D}_4+2\bar{D}_3+2\bar{E}_4+\bar{E}_5,$$ $$C_3=\bar{M}_2+\tilde{E'}_1,$$ where $\bar{D}_1,\ldots,\bar{D}_5$ are the proper transforms of $D_1,\ldots, D_5$, $\bar{E}_1,\ldots, \bar{E}_5$ are the proper transforms of $E_1,\ldots,E_5$, $\bar{M}$ is the proper transform of $M$, $\tilde{E'}_1$ is the proper transform of $\tilde{E}_1$. On the other hand, $$\tilde{X}\backslash\Supp(L)\cong\FF_1\backslash(K+F_1+F_2+F_3),$$ where $K$ is a unique $(-1)$-curve and $F_1, F_2, F_3$ are fibers. Then $X$ has an $H$-polar cylinder.

Assume that $H=-K_X+\mu C+\lambda \varphi(E)+\lambda_1 \varphi(\hat{E})$, where $C$ is a $(0)$-curve and $\mu>0$, $0\leq\lambda<5$, $0\leq \lambda_1<2$, $E$ is a $(-1)$-curve on $\tilde{X}$ that meets only $D_1$ and does not meet $D_2,D_3,D_4,D_5$, $\hat{E}$ is a $(-1)$-curve on $\tilde{X}$ that meets only $D_5$ and does not meet $D_1,D_2,D_3,D_4$. Note that $$\tilde{C}\sim E+D_1+D_2+D_3+D_4+E',$$ where $E'$ is a $(-1)$-curve that meets only $D_4$ and does not meet $D_1,D_2,D_3,D_5$. We have the following configuration on $\tilde{X}$.\\
\begin{tikzpicture}
\draw  (0.5,0) -- (0.5,3) (0,0.5) -- (8,0.5) (7.5,0)--(7.5,4) (7.2,3.5)--(11,3.5) (3,1.5)--(5,1.5); \draw [dashed] (7,1.5)--(9,1.5) (7,2)--(9,2) (0,2.5)--(2,2.5) (4,0)--(4,2) (10.5,4)--(10.5,2);
\draw (1,0.5) node [above] {$D_3$} (0.5,3) node [above] {$D_4$} (7.5,4) node [above] {$D_2$}  (11,3.5) node [right] {$D_1$} (5,1.5) node [right] {$D_5$}  (9,1.5) node [right] {$E_1$} (4,2) node [above] {$E_3$} (9,2) node [right] {$E_2$} (2,2.5) node [right] {$E'$} (10.5,2) node [right] {$E$};
\end{tikzpicture}$$\text{Figure 18}$$
where dotted lines are $(-1)$-curves, $D_1,\ldots,D_5$ are $(-2)$-curves. Moreover, $\hat{E}\cdot E_3=\hat{E}\cdot E_1=1$, $\hat{E}\cdot E_2=0$. We have \begin{gather*}\varphi^*(H)\equiv L=(a+\frac{4}{5}\lambda)D_1+(a-1+\lambda)E+(a+1+\frac{3}{5}\lambda)D_2+\\
+(a-\lambda_1-\mu)E_1+(a-\mu)E_2+(b+\frac{1}{5}\lambda)D_4+(b-1)E'+\\
+(b+1+\frac{2}{5}\lambda)D_3+(b-\mu-\frac{1}{2}\lambda_1)D_5+2(b-\mu-\frac{1}{2}\lambda_1)E_3,\end{gather*} where $a+b=1+\lambda_1+2\mu$. Assume that $\lambda_1+\lambda+2\mu>1$. Then there exist $a$ and $b$ such that $L$ is an effective divisor. Let $Y\rightarrow\tilde{X}$ be the blow up of intersection point of $D_2$ and $D_3$, and let $F$ be the exceptional divisor. Note that there exists $\PP^1$-fibration $g\colon Y_2\rightarrow\PP^1$ such that $g$ has only two singular fibers $C_1, C_2$ and $$C_1=\bar{D}_1+\bar{E}+\bar{E}_1+\bar{E}_2+\bar{D}_2,$$ $$C_2=\bar{D}_3+\bar{D}_4+\bar{D}_5+2\bar{E}_3+\bar{E'},$$ where $\bar{D}_1,\ldots,\bar{D}_5$ are the proper transforms of $D_1,\ldots, D_5$, $\bar{E}_1,\bar{E}_2, \bar{E}_3, \bar{E}, \bar{E'}$ are the proper transforms of $E_1,E_2, E_3, E, E'$. On the other hand, $$\tilde{X}\backslash\Supp(L)\cong\FF_1\backslash(K+F_1+F_2),$$ where $K$ is a unique $(-1)$-curve and $F_1, F_2$ are fibers. Then $X$ has an $H$-polar cylinder. So, we may assume that $\lambda_1+\lambda+2\mu\leq 1$. Consider the configuration as in Figure 14. We have \begin{gather*}\varphi^*(H)\equiv L=aD_1+2aD_2+(2a-1-\lambda_1-2\mu-\frac{3}{5}\lambda)E_3+\\
+(2a-1-\frac{1}{2}\lambda_1-\mu-\frac{3}{5}\lambda)D_5+2(2a-1-\frac{1}{2}\lambda_1-\mu-\frac{3}{5}\lambda)E_4+\\
+bD_4+2bD_3+(2b-1-\lambda_1-\mu-\frac{2}{5}\lambda)E_1+\\
+(2b-1-\mu-\frac{2}{5}\lambda)E_2+(b-2\mu-\lambda_1-\frac{6}{5}\lambda)M,\end{gather*} where $a+b=1+\lambda+\lambda_1+2\mu$, $M$ is a $(0)$-curve such that $M$ pass through the intersection point of $D_2$ and $D_3$ and $M\cdot D_2=M\cdot D_3=1$. Assume that $\lambda_1+\lambda+2\mu<1$. We see that there exist $a$ and $b$ such that $L$ is an effective divisor. As above, $X$ has an $H$-polar cylinder. So, we may assume that $\lambda_1+\lambda+2\mu=1$. Consider the configuration as in Figure 17. We may assume that $E=E_5$. We have \begin{gather*}\varphi^*(H)\equiv L=(a+\frac{4}{5}\lambda)D_1+(a-1+\lambda)E_5+(a+1+\frac{3}{5}\lambda)D_2+\\
+(a-2\mu-\lambda_1)E_1+(a-\mu-\lambda_1)E_2+(a-\mu)E_3+(b+\frac{1}{5}\lambda)D_4+\\
+(2b+\frac{2}{5}\lambda)D_3+(2b-1-\frac{1}{2}\lambda_1-\mu)D_5+2(2b-1-\frac{1}{2}\lambda_1-\mu)E_4,\end{gather*} where $a+b=1+\lambda_1+2\mu$. Since $\lambda_1+\lambda+2\mu=1$, we see that there exist $a$ and $b$ such that $L$ is an effective divisor. As above, $X$ has an $H$-polar cylinder.
\end{proof}

\begin{lemma}
\label{LemA4A2}
Let $X$ be a del Pezzo surface with du Val singularities and let $H$ be an ample divisor on $X$. Assume that $X$ has the following collection of singularities $A_4+A_2$. Then $X$ has an $H$-polar cylinder.
\end{lemma}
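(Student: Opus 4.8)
The plan is to argue exactly as in Lemmas \ref{LemA4}--\ref{LemA4A1}. Let $\varphi\colon\tilde X\to X$ be the minimal resolution, with exceptional divisor $D=D_1+\dots+D_6$, where $D_1,D_2,D_3,D_4$ is the $A_4$-chain and $D_5,D_6$ the $A_2$-chain. Since $\deg X=1$ and $X$ carries six $(-2)$-curves, $\rho(X)=9-6=3$, so the Fujita face $\Delta_H$ has dimension at most $2$; when $r_H=0$ we have $-K_X\equiv\mu_H H$ and any $(-K_X)$-polar cylinder is already an $H$-polar cylinder, so we may assume $1\le r_H\le 2$.

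First I would enumerate the possible shapes of $H$. If $r_H=1$ and $H$ is of type $B$, then $H\equiv-K_X+\lambda\,\varphi(E)$ for a single $(-1)$-curve $E$ on $\tilde X$, and the finitely many sub-cases are indexed by which of $D_1,\dots,D_6$ (if any) the curve $E$ meets; for each one $\varphi^*\varphi(E)=E+\sum c_iD_i$ with the $c_i$ the usual fractional coefficients of the $A_4$- (resp.\ $A_2$-) chain, and the bound on $\lambda$ (e.g.\ $\lambda<5$, $\lambda<3$ or $\lambda<1$) is read off from ampleness exactly as in Lemmas \ref{LemA4A3} and \ref{LemA4A2A1}. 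If $r_H=2$, the Fujita face is spanned by two curves, giving $H\equiv-K_X+\lambda_1\varphi(E_1)+\lambda_2\varphi(E_2)$, or $H\equiv-K_X+\mu C+\lambda\varphi(E)$, or $H\equiv-K_X+\mu C$ with $C$ a $0$-curve (the conic-bundle case); here I would use the classification of degree-$1$ weak del Pezzo surfaces with an $A_4+A_2$ configuration to pin down the incidences of these $(-1)$- and $0$-curves with the two chains.

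Next, for each configuration I would write down an explicit effective $\RR$-divisor $L\equiv\varphi^*(H)$ supported on the $(-2)$-curves together with the relevant $(-1)$- and $0$-curves, using one or two free parameters with $a+b$ (or $x+y$) equal to $1+(\text{sum of the }\lambda,\mu)$, plus a small $\epsilon>0$, and then split into the cases where the governing combination of coefficients is $<1$, $=1$ or $>1$, as is done systematically throughout Lemma \ref{LemA4A1}; in the boundary and large-coefficient regimes one passes to a $0$-curve $M$ through a node of $D$, or blows up one or several nodes of $D$ as in Lemmas \ref{LemA4A3} and \ref{LemA4A2A1}, to free up room for $L$ to be effective. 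Finally I would produce a $\PP^1$-fibration $g\colon\tilde X\to\PP^1$ (possibly after the blow-ups $\bar X\to\tilde X$ just mentioned) whose singular fibers are precisely the chains occurring in $\Supp L$, so that $\tilde X\setminus\Supp(L)\cong\FF_n\setminus(M+F_1+\dots+F_k)$ with $M$ the unique negative section and the $F_i$ fibers; this open set is isomorphic to $Z\times\AAA^1$, hence is an $H$-polar cylinder, and therefore so is its image in $X$.

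The main obstacle is the combinatorial bookkeeping: one must be certain to have caught every position of the $(-1)$-curve (or $0$-curve) that can define $\Delta_H$ — the $A_4+A_2$ configuration admits $(-1)$-curves meeting the end components as well as interior components of either chain, and the $0$-curves sweeping it out come in a couple of incidence types — and, for each, one must choose $L$ and the fibration $g$ so that the complement is genuinely a Hirzebruch surface with a section and fibres removed. The delicate part is the large-coefficient regime, where one blows up a node of $D$ and uses an auxiliary $0$-curve $M$: there the inequalities cutting out the admissible $(a,b)$ must be checked to be compatible with the boundary equality $\lambda+\mu+\dots=1$, and this verification is where the real work lies.
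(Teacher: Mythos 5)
Your proposal reproduces the paper's general strategy (case analysis on the Fujita face, an explicit effective $\RR$-divisor $L\equiv\varphi^*(H)$ supported on the $(-2)$-curves plus a few $(-1)$- and $0$-curves, then a $\PP^1$-fibration, possibly after blowing up nodes of $D$, whose complement is a Hirzebruch surface minus the negative section and some fibers). But as written it is a scheme, not a proof: the entire content of Lemma \ref{LemA4A2} is precisely the bookkeeping you defer. You never determine which extremal configurations actually occur on the degree-one $A_4+A_2$ surface, never write down a single divisor $L$ for it, never verify effectiveness in the boundary regimes, and never exhibit a fibration whose singular fibers exhaust $\Supp(L)$. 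For instance, part of the enumeration is to observe that no conic-bundle face occurs here at all: a $\PP^1$-fibration on $\tilde X$ making both chains vertical would need at least $5+3=8$ extra fiber components, while $\rho(\tilde X)-2=7$; so, unlike in Lemma \ref{LemA4A1}, there is no case $H\equiv-K_X+\mu C+\cdots$, and the paper only has to treat type-$B$ faces spanned by (a) a $(-1)$-curve disjoint from $\Sing(X)$ together with a $(-1)$-curve through the $A_2$-point, and (b) $(-1)$-curves through $D_1$ and through $D_6$. Your list is simultaneously over-inclusive and unchecked, and settling it is part of the work, not a routine citation of a classification.

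The genuinely delicate step you wave at but do not supply is the large-coefficient regime. In the paper, when (in case (a)) $\lambda\geq 1+2\lambda_1$, the naive decompositions fail to be effective, and one must re-express
$\lambda E'+\tfrac{2}{3}\lambda D_6+\tfrac{1}{3}\lambda D_5\equiv-\lambda E_2-\tfrac{1}{3}\lambda D_6-\tfrac{2}{3}\lambda D_5+\lambda D_2+\lambda D_4+2\lambda D_3+2\lambda E_3$
(and similarly for $\lambda_1\tilde E$), introduce a $0$-curve $M$ through the node $D_1\cap D_2$ with a free coefficient $b$, and then blow up that node twice so that the resulting surface carries a fibration with exactly two singular fibers containing $\Supp(L)$; only then does the effectivity condition become compatible with $\lambda\geq1+2\lambda_1$. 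Analogously, in case (b) one must switch decompositions at $\lambda+\tfrac{2}{3}\lambda_1=1$. Without producing these explicit linear equivalences, coefficient inequalities, and fibrations, the assertion that ``$L$ can be made effective'' in the boundary and large-coefficient regimes is exactly the unproved part, so the proposal has a real gap rather than being a complete alternative argument.
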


\begin{proof}
Let $\varphi\colon\tilde{X}\rightarrow X$ be the minimal resolution of singularities of $X$ and let $D=\sum_{i=1}^6 D_i$ be the exceptional divisor of $\varphi$. We may assume that $D_1,D_2,D_3,D_4$ correspond to the singular point of type $A_4$, $D_5, D_6$ corresponds to the singular point of type $A_2$. Assume that $H=-K_X+\lambda_1 E+\lambda\varphi(E')$, where $E$ ia a $(-1)$-curve and $\lambda_1<1$, $E'$ is a $(-1)$-curve on $\tilde{X}$ that meets only $D_6$ and does not meet $D_1,D_2,D_3,D_4, D_5$, $\lambda<3$. Note that at least one of $\lambda, \lambda_1$ is not equal to zero. We have the following configuration on $\tilde{X}$.\\
\begin{tikzpicture}
\draw  (0.5,0) -- (0.5,3) (0,0.5) -- (8,0.5) (7.5,0)--(7.5,4) (7.2,3.5)--(10,3.5) (3.5,1.5)--(6,1.5) (4,1)--(4,3.5); \draw [dashed] (3,0)--(3,3) (7,1.5)--(10,1.5) (5.5,0)--(5.5,2) ;
\draw (1,0.5) node [above] {$D_2$} (0.5,3) node [above] {$D_1$} (7.5,4) node [above] {$D_3$}  (10,3.5) node [right] {$D_4$} (6,1.5) node [right] {$D_6$}  (10,1.5) node [right] {$E_1$} (5.5,2) node [above] {$E_3$} (3,3) node [above] {$E_2$} (4,3.5) node [above] {$D_5$};
\end{tikzpicture}$$\text{Figure 19}$$
where dotted lines are $(-1)$-curves, $D_1,\ldots,D_6$ are $(-2)$-curve. Moreover, $E_1\cdot\tilde{E}=E_1\cdot E'=E_2\cdot \tilde{E}=E_3\cdot E'=0$, $E_3\cdot\tilde{E}=E_2\cdot E'=1$. We have \begin{gather*}\varphi^*(H)\equiv L=aD_1+2aD_2+(2a-1-\lambda)E_2+(2a-1-\lambda_1-\frac{2}{3}\lambda)D_5+\\
+2(2a-1-\lambda_1-\frac{2}{3}\lambda)D_6+3(2a-1-\lambda_1-\frac{2}{3}\lambda)E_3+bD_4+2bD_3+\\
+(2b-1)E_1+(3b-1-2\lambda-2\lambda_1)M,\end{gather*} where $a+b=1+\lambda+\lambda_1$, $M$ is a $(0)$-curve such that $M$ pass through the intersection point of $D_2$ and $D_3$ and $M\cdot D_2=M\cdot D_3=1$. Assume that $L$ is an effective divisor. Let $Y_1\rightarrow\tilde{X}$ be the blow up of the intersection point of $D_2$ and $D_3$, and $N_1$ be the exceptional divisor. Let $Y_2\rightarrow Y_1$ be the blow up of the intersection point of $N_1$ and the proper transform of $D_2$, and $N_2$ be the exceptional divisor. Let $Y_3\rightarrow Y_2$ be the blow up of the intersection point of $N_2$ and the proper transform of $D_2$, and $N_3$ be the exceptional divisor. Let $Y_4\rightarrow Y_3$ be the blow up of the intersection point of $N_3$ and the proper transform of $N_2$, and $F$ be the exceptional divisor.
We obtain a surface $Y_4$ with the following configuration\\
\begin{tikzpicture}
\draw  (3,0) -- (0,2) (0.5,1.5) -- (0.5,4) (0.2,3.2)--(2,5) (1,4.8)--(4,4.8) (6,0)--(8,3) (7.5,1.3)--(7.5,7) (8,6.5)--(5,6.5) (8.5,3)--(8.5,5.5) (8,5)--(10,5); \draw [dashed] (2,0.5) -- (7,0.5) (0,3)--(3,3) (1.8,3.2)--(0,5) (6.8,3.5)--(9,3.5) (7,6)--(10,6);
\draw (7.5,1.3) node [right] {$\tilde{D}_2$} (2,5) node [above] {$\tilde{D}_3$} (3.5,4.8) node [above] {$\tilde{D}_4$}  (5,6.5) node [below] {$\tilde{D}_1$} (0,5) node [above] {$\tilde{E}_1$} (9,3.5) node [right] {$\tilde{E}_3$} (3,3) node [right] {$\tilde{M}$} (7,0.5) node [right] {$F$} (0.5,2.2) node [right] {$\tilde{N}_1$} (1,1.5) node [right] {$\tilde{N}_2$} (6.8,1.1) node [left] {$\tilde{N}_3$} (8.6,3) node [below] {$\tilde{D}_6$}  (10,6) node [right] {$\tilde{E}_2$} (10,5) node [right] {$\tilde{D}_5$};
\end{tikzpicture}\\ where $\tilde{D}_1,\ldots,\tilde{D}_6$ are the proper transforms of $D_1,\ldots, D_6$, $\tilde{E}_1, \tilde{E}_2, \tilde{E}_3$ are the proper transforms of $E_1,E_2,E_3$, $\tilde{N}_1, \tilde{N}_2, \tilde{N}_3$ are the proper transforms of $N_1, N_2, N_3$, $\tilde{M}$ is the proper transform of $M$. Note that there exists $\PP^1$-fibration $g\colon Y_4\rightarrow\PP^1$ such that $g$ has only two singular fibers $C_1, C_2$ and $$C_1=\tilde{D}_1+\tilde{N}_3+2\tilde{D}_2+2\tilde{E}_2+2\tilde{D}_5+4\tilde{D}_6+6\tilde{E}_3,$$ $$C_2=\tilde{D}_4+\tilde{N}_2+2\tilde{D}_3+2\tilde{E}_1+3\tilde{N}_1+3\tilde{M}.$$
Note that $$Y_4\backslash\Supp(F+C_1+C_2)\cong X\backslash\Supp(\varphi(L))\cong\FF_1\backslash(K+F_1+F_2),$$ where $K$ is a unique $(-1)$-curve and $F_1, F_2$ are fibers. Then $X$ has an $H$-polar cylinder. Assume that $\lambda\leq3\lambda_1$. Then $L$ is an effective divisor if and only if $a>\frac{1}{2}+\frac{1}{2}\lambda_1+\frac{1}{3}\lambda$ and $b>\frac{1}{3}+\frac{2}{3}\lambda+\frac{2}{3}\lambda_1$. Since $\lambda_1<1$, we see that there exist $a$ and $b$ such that $L$ is an effective divisor. So, we may assume that $\lambda>3\lambda_1$. Then $L$ is an effective divisor if and only if $a>\frac{1}{2}+\frac{1}{2}\lambda$ and $b>\frac{1}{3}+\frac{2}{3}\lambda+\frac{2}{3}\lambda_1$. Hence, $L$ is an effective divisor if $\lambda<1+2\lambda_1$. So, we may assume that $\lambda\geq 1+2\lambda_1$.
We have the following configuration on $\tilde{X}$.
\begin{tikzpicture}
\draw  (0,0.5) -- (10,0.5) (8.5,0) -- (10.5,2) (10,1)--(10,5) (10.5,4)--(8.5,6) (0.5,2)--(0.5,5) (0,4.5)--(3,4.5); \draw [dashed] (2,0) -- (0,3) (2,5)--(4,3) (8,0) -- (6,3) (6.5,2)--(6.5,5) (9,3)--(12,3);
\draw (1.5,1.5) node [above] {$E_2$} (0.5,5) node [above] {$D_5$} (3,4.5) node [right] {$D_6$} (7.5,1.5) node [above] {$E_1$} (6.5,5) node [above] {$\tilde{E}$} (12,3) node [right] {$E_3$} (0,0.5) node [above] {$D_1$} (10.5,2) node [right] {$D_2$} (10,5) node [above] {$D_3$} (8.5,6) node [above] {$D_4$} (4,3) node [right] {$E'$};
\end{tikzpicture}$$\text{Figure 20}.$$ where dotted lines are $(-1)$-curves, $D_1,\ldots,D_6$ are $(-2)$-curves.  We have $$\lambda E'+\frac{2}{3}\lambda D_6+\frac{1}{3}\lambda D_5\equiv -\lambda E_2-\frac{1}{3}\lambda D_6-\frac{2}{3}\lambda D_5+\lambda D_2+\lambda D_4+2\lambda D_3+2\lambda E_3,$$ $$\lambda_1\tilde{E}\equiv-\lambda_1 E_1+\lambda_1 D_2+\lambda_1 D_4+2\lambda_1 D_3+2\lambda_1 E_3.$$ We obtain \begin{gather*}\varphi^*(H)\equiv L=aD_1+(a-1-\lambda_1)E_1+(a-1-\frac{1}{3}\lambda)D_6+\\
+2(a-1-\frac{1}{3}\lambda)D_5+3(a-1-\frac{1}{3}\lambda)E_2+\\
+(b+\lambda+\lambda_1)D_2+(b-1+\lambda+\lambda_1)D_4+\\
+(2b-2+2\lambda+2\lambda_1)D_3+(2b-3+2\lambda+2\lambda_1)E_3+bM,\end{gather*} where $a+b=2$, $M$ is a $(0)$-curve such that $M$ pass through the intersection point of $D_1$ and $D_2$ and $M\cdot D_1=M\cdot D_2=1$. Since $\lambda\geq 1+2\lambda_1$, we see that there exist $a$ and $b$ such that $L$ is an effective divisor. Let $Y_1\rightarrow\tilde{X}$ be the blow up of the intersection point of $D_1$ and $D_2$, and $N$ be the exceptional divisor. Let $Y_2\rightarrow Y_1$ be the blow up of the intersection point of $N$ and the proper transform of $D_1$, and $F$ be the exceptional divisor. Note that there exists $\PP^1$-fibration $g\colon Y_2\rightarrow\PP^1$ such that $g$ has only two singular fibers $C_1, C_2$ and $$C_1=\tilde{D}_1+\tilde{E}_1+\tilde{D}_6+2\tilde{D}_5+3\tilde{E}_2,$$ $$C_2=\tilde{D}_4+\tilde{N}+\tilde{M}+\tilde{D}_2+2\tilde{D}_3+2\tilde{E}_3,$$ where $\tilde{D}_1,\ldots,\tilde{D}_6$ are the proper transforms of $D_1,\ldots, D_6$, $\tilde{E}_1, \tilde{E}_2, \tilde{E}_3$ are the proper transforms of $E_1,E_2,E_3$, $\tilde{N}$ is the proper transform of $N$, $\tilde{M}$ is the proper transform of $M$. Then $X$ has an $H$-polar cylinder.

Assume that $H=-K_X+\lambda \varphi(E)+\lambda_1\varphi(E')$, where $E$ ia a $(-1)$-curve on $\tilde{X}$ that meets only $D_1$ and does not meet $D_2,D_3,D_4,D_5, D_6$ and $\lambda<5$, $E'$ is a $(-1)$-curve on $\tilde{X}$ that meets only $D_6$ and does not meet $D_1,D_2,D_3,D_4, D_5$, $\lambda<3$. Note that we may assume that $\lambda>0$. We have the following configuration on $\tilde{X}$.\\ \begin{tikzpicture}
\draw  (0.5,0) -- (0.5,3) (0,0.5) -- (8,0.5) (7.5,0)--(7.5,4) (7.2,3.5)--(10,3.5) (4,1.5)--(6,1.5) (4.5,1)--(4.5,3); \draw [dashed] (0,2)--(2,2)(7,1.5)--(10,1.5) (7,2)--(10,2) (5.5,0)--(5.5,2) ;
\draw (1,0.5) node [above] {$D_2$} (0.5,3) node [above] {$D_1$} (7.5,4) node [above] {$D_3$}  (10,3.5) node [right] {$D_4$} (6,1.5) node [right] {$D_6$}  (10,1.5) node [right] {$E_1$} (5.5,2) node [above] {$E_3$} (10,2) node [right] {$E_2$} (4.5,3) node [above] {$D_5$} (2,2) node [right] {$E$};
\end{tikzpicture}$$\text{Figure 21}$$ where dotted lines are $(-1)$-curves, $D_1,\ldots,D_6$ are $(-2)$-curves. Moreover, $E'\cdot E_1=1$, $E'\cdot E_2=0$. We have \begin{gather*}\varphi^*(H)\equiv L=aD_1+2aD_2+(2a-1-\frac{2}{3}\lambda_1-\frac{3}{5}\lambda)D_5+\\
+2(2a-1-\frac{2}{3}\lambda_1-\frac{3}{5}\lambda)D_6+3(2a-1-\frac{2}{3}\lambda_1-\frac{3}{5}\lambda)E_3+bD_4+2bD_3+\\
+(2b-1-\lambda_1-\frac{2}{5}\lambda)E_1+(2b-1-\frac{2}{5}\lambda)E_2+(b-\lambda_1-\frac{6}{5}\lambda)M,\end{gather*} where $a+b=1+\lambda_1+\lambda$, $M$ is a $(0)$-curve such that $M$ pass through the intersection point of $D_2$ and $D_3$ and $M\cdot D_2=M\cdot D_3=1$. Assume that $L$ is an effective divisor. Let $Y_1\rightarrow\tilde{X}$ be the blow up of the intersection point of $D_2$ and $D_3$, and $N_1$ be the exceptional divisor. Let $Y_2\rightarrow Y_1$ be the blow up of the intersection point of $N_1$ and the proper transform of $D_2$, and $N_2$ be the exceptional divisor. Let $Y_3\rightarrow Y_2$ be the blow up of the intersection point of $N_2$ and the proper transform of $N_1$, and $F$ be the exceptional divisor. Note that there exists $\PP^1$-fibration $g\colon Y_3\rightarrow\PP^1$ such that $g$ has only two singular fibers $C_1, C_2$ and $$C_1=\tilde{D}_1+\tilde{N}_2+2\tilde{D}_2+2\tilde{D}_5+4\tilde{D}_6+6\tilde{E}_3,$$ $$C_2=\tilde{D}_4+\tilde{N}_1+\tilde{M}+2\tilde{D}_3+2\tilde{E}_1+2\tilde{E}_2,$$ where $\tilde{D}_1,\ldots,\tilde{D}_6$ are the proper transforms of $D_1,\ldots, D_6$, $\tilde{E}_1, \tilde{E}_2, \tilde{E}_3$ are the proper transforms of $E_1,E_2,E_3$, $\tilde{N}_1, \tilde{N}_2$ are the proper transforms of $N_1, N_2$, $\tilde{M}$ is the proper transform of $M$. Then $X$ has an $H$-polar cylinder. Note that $L$ is an effective divisor if and only if $a>\frac{1}{2}+\frac{1}{3}\lambda_1+\frac{3}{10}\lambda$ and $b>\lambda_1+\frac{6}{5}\lambda$. Note that if $\lambda+\frac{2}{3}\lambda_1<1$, then there exist $a$ and $b$ such that $L$ is an effective divisor. So, we may assume that $\lambda+\frac{2}{3}\lambda_1\geq 1$. Also, consider the configuration as in Figure 21. We have \begin{gather*}\varphi^*(H)\equiv L=(a+\frac{4}{5}\lambda)D_1+(a-1+\lambda)E+(a+1+\frac{3}{5}\lambda)D_2+\\
+(a-\frac{2}{3}\lambda_1)D_5+2(a-\frac{2}{3}\lambda_1)D_6+3(a-\frac{2}{3}\lambda_1)E_3+(b+\frac{1}{5}\lambda)D_4+\\
+(2b+\frac{2}{5}\lambda)D_3+(2b-1-\lambda_1)E_1+(2b-1)E_2,\end{gather*} where $a+b=1+\lambda_1$. Since $\lambda+\frac{2}{3}\lambda_1\geq 1$ and $\lambda_1<3$, we see that there exist $a$ and $b$ such that $L$ is an effective divisor. Let $Y_1\rightarrow\tilde{X}$ be the blow up of the intersection point of $D_2$ and $D_3$, and $N$ be the exceptional divisor. Let $Y_2\rightarrow Y_1$ be the blow up of the intersection point of $N$ and the proper transform of $D_2$, and $F$ be the exceptional divisor. Note that there exists $\PP^1$-fibration $g\colon Y_2\rightarrow\PP^1$ such that $g$ has only two singular fibers $C_1, C_2$ and $$C_1=\tilde{E}+\tilde{D}_1+\tilde{D}_2+\tilde{D}_5+2\tilde{D}_6+3\tilde{E}_3,$$ $$C_2=\tilde{D}_4+\tilde{N}+2\tilde{D}_3+2\tilde{E}_1+2\tilde{E}_2,$$ where $\tilde{D}_1,\ldots,\tilde{D}_6$ are the proper transforms of $D_1,\ldots, D_6$, $\tilde{E}_1, \tilde{E}_2, \tilde{E}_3$ are the proper transforms of $E_1,E_2,E_3$, $\tilde{N}$ is the proper transform of $N$. Then $X$ has an $H$-polar cylinder.
\end{proof}

\begin{lemma}
\label{LemA4A1A1}
Let $X$ be a del Pezzo surface with du Val singularities and let $H$ be an ample divisor on $X$. Assume that $X$ has the following collection of singularities $A_4+A_1+A_1$. Then $X$ has an $H$-polar cylinder.
\end{lemma}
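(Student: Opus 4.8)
The strategy is the one followed in Lemmas~\ref{LemA4}, \ref{LemA4A1}, \ref{LemA4A2A1}, \ref{LemA4A2} and \ref{LemA4A3}, so I only sketch it. I would take the minimal resolution $\varphi\colon\tilde X\to X$ with exceptional divisor $D=\sum_{i=1}^{6}D_i$, where $D_1,D_2,D_3,D_4$ is the chain over the $A_4$-point and $D_5$, $D_6$ are the $(-2)$-curves over the two $A_1$-points. As in the previous lemmas $\deg X=1$, so $\tilde X$ is a weak del Pezzo surface of degree $1$, $\rho(\tilde X)=9$, and hence $\rho(X)=9-6=3$, which forces $r_H\le 2$. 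If $r_H=0$ then $-K_X\equiv\mu_H H$, and since $X$ has an $A_4$-point it carries a $(-K_X)$-polar cylinder by Theorem~\ref{theorem:cylinders:-K_S}; rescaling the defining divisor gives an $H$-polar cylinder. So I may assume $r_H\in\{1,2\}$ and write $H\equiv -K_X+\sum_{j}\lambda_j\varphi(E_j)$ with at most two summands, where the $\varphi(E_j)$ are either $(-1)$-curves on $X$, or images of $(-1)$-curves on $\tilde X$ meeting $D_1$, $D_5$ or $D_6$, or a $(0)$-curve $C$ (a conic-bundle fibre). This yields a finite list of subcases, organized by which $D_i$ the curves $E_j$ meet, entirely parallel to the list treated in Lemma~\ref{LemA4A1}; the a priori bounds are $\lambda_j<1$ for strict transforms of $(-1)$-curves on $X$, $\lambda_j<2$ for images of curves meeting $D_5$ or $D_6$, $\lambda_j<5$ for images of curves meeting $D_1$, and $\mu>0$ for the $(0)$-curve.

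In each subcase the plan is to exhibit an explicit effective $\RR$-divisor $L\equiv\varphi^*(H)$ supported on the union of $D$, finitely many $(-1)$-curves and, when present, the strict transform of the $(0)$-curve, with coefficients forming a one-parameter family: a free parameter $a$, the remaining coefficients being pinned down by intersecting with suitable numerical classes, together with a small $\epsilon>0$. One then produces a $\PP^1$-fibration $g\colon\tilde X\to\PP^1$ --- or $g\colon\bar X\to\PP^1$ after one or two blow-ups of intersection points of consecutive $(-2)$-curves in the $A_4$-chain --- whose singular fibres together with the unique negative section exhaust $\Supp(L)$ and one extra curve, so that $\tilde X\setminus\Supp(L)\cong\FF_n\setminus(M+F_1+\dots+F_k)$ with $M$ the negative section and the $F_i$ fibres. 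The latter surface is a cylinder; since $\varphi$ contracts only curves contained in $\Supp(L)$, the same open subset equals $X\setminus\Supp(\varphi(L))$ with $\varphi(L)$ effective and $\equiv H$, so $X$ has an $H$-polar cylinder.

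The delicate point, exactly as in Lemmas~\ref{LemA4} and \ref{LemA4A1}, is the effectivity of $L$ on the boundary strata $\sum_j\lambda_j=1$ (and $\lambda+2\mu=1$ when a $(0)$-curve is involved). When the naive choice of $a$ makes a coefficient negative, I would use the linear-equivalence ``swap'': a $(-1)$-curve $\hat E_j$ meeting only $D_1$, $D_5$ or $D_6$ is linearly equivalent to $-E'_j$ plus a fixed effective combination of the $D_i$ and a further $(-1)$-curve (coming from the anticanonical system of the del Pezzo surface of larger degree obtained by contracting the other generating curves); substituting this rewrites $L$ as a different effective divisor, at the price of turning $\FF_2$ into $\FF_1$ in the final picture. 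On the strata themselves one checks, as in the cited lemmas, that the two competing lower bounds on $a$ forced by the relation $\sum_j\lambda_j=1$ are compatible, so an admissible value of $a$ exists, after which an additional blow-up of an intersection point of two $(-2)$-curves supplies a section for $g$. Hence I expect the only real difficulty to be combinatorial --- enumerating the mutual positions of $\hat E_1$, $\hat E_2$ relative to the $A_4$-chain and the two $A_1$-curves and verifying the explicit inequalities in each --- rather than geometric; no phenomenon beyond those already met for $A_4+A_1$ should occur, the sole new feature being that having two $A_1$-points multiplies the number of configurations to be examined.
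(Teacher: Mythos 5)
Your outline follows the paper's proof of this lemma essentially step for step: minimal resolution, $\rho(X)=3$, the Fujita face generated by at most two classes among the images of $(-1)$-curves of $\tilde X$ meeting $D_1$, $D_5$ or $D_6$ and a $(0)$-curve, an explicit effective $L\equiv\varphi^*(H)$ supported on $D$ and finitely many auxiliary curves, and a $\PP^1$-fibration (after one or more blow-ups of intersection points in the $A_4$-chain) whose singular fibres together with a section exhaust $\Supp(L)$; the a priori bounds you quote ($\lambda<2$ over an $A_1$-point, $\lambda<5$ over $D_1$) are exactly those used in the paper, and your inclusion of $(-1)$-curves of $X$ disjoint from the singular locus is harmless over-inclusiveness rather than an error.

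The gap is that nothing is actually carried out: the substance of the lemma is precisely the case-by-case data you defer. For each of the combinations that occur ($\lambda\varphi(E)$ with $E$ over an $A_1$-point; $\lambda_1\varphi(\hat E_1)+\lambda_2\varphi(\hat E_2)$ over the two $A_1$-points; $\lambda_1\varphi(\hat E_1)+\lambda\varphi(E)$ with $E$ over $D_1$; $\mu C+\lambda\varphi(E)$) one must specify the curve configuration, the coefficients of $L$, the effectivity inequalities, and the fibration, and the boundary regimes are neither uniform nor purely combinatorial. For instance, when $\lambda_1\geq 1+\tfrac12\lambda_2$ the paper must pass to a different configuration via the relations $\lambda_i\hat E_i+\tfrac12\lambda_i D_{5}\equiv-\lambda_i E_i-\tfrac12\lambda_i D_{5}+\lambda_i(D_2+D_4+2D_3+2E_3)$ (and similarly with $D_6$) and build a new fibration from two blow-ups at $D_1\cap D_2$; and in the $(0)$-curve case with $\mu\geq1$ the device is not the swap you describe but a rewriting of $L$ using the second end $E'$ of the fibre class, $\tilde C\sim E+D_1+D_2+D_3+D_4+E'$, with a single blow-up at $D_2\cap D_3$ supplying the section. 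Since no explicit $L$, no configuration and no effectivity check appears in your text, the proposal is a correct plan --- indeed the plan the paper executes --- but not yet a proof; completing it requires precisely the explicit verifications above.
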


\begin{proof}
Let $\varphi\colon\tilde{X}\rightarrow X$ be the minimal resolution of singularities of $X$ and let $D=\sum_{i=1}^6 D_i$ be the exceptional divisor of $\varphi$. We may assume that $D_1,D_2,D_3,D_4$ correspond to the singular point of type $A_4$, $D_5, D_6$ corresponds to the singular points of type $A_1$. Assume that $H=-K_X+\lambda \varphi(E)$, where $E$ ia a $(-1)$-curve on $\tilde{X}$ that meets only $D_5$ and $\lambda<2$. We have the following configuration on $\tilde{X}$.\\
\begin{tikzpicture}
\draw  (0.5,0) -- (0.5,3) (0,0.5) -- (8,0.5) (7.5,0)--(7.5,4) (7.2,3.5)--(10,3.5) (5,1.5)--(6,1.5) (9,1)--(9,3); \draw [dashed] (4,0)--(4,3) (7,1.5)--(10,1.5) (5.5,0)--(5.5,2) (0,2.5)--(2,2.5);
\draw (1,0.5) node [above] {$D_2$} (0.5,3) node [above] {$D_1$} (7.5,4) node [above] {$D_3$}  (10,3.5) node [right] {$D_4$} (6,1.5) node [right] {$D_5$}  (10,1.5) node [right] {$E_1$} (5.5,2) node [above] {$E_2$} (9,3) node [right] {$D_6$} (4,3) node [above] {$E_3$} (2,2.5) node [right] {$E$};
\end{tikzpicture}$$\text{Figure 22}$$ where dotted lines are $(-1)$-curves, $D_1,\ldots,D_6$ are $(-2)$-curves. Moreover, $E\cdot E_1=1$, $E\cdot E_2=E\cdot E_3=0$. We have \begin{gather*}\varphi^*(H)\equiv L=aD_1+2aD_2+(2a-1)E_3+(2a-1-\frac{1}{2}\lambda)D_5+\\
+2(2a-1-\frac{1}{2}\lambda)E_2+bD_4+2bD_3+(2b-1-\lambda)D_6+\\
+2(2b-1-\lambda)E_1+(b-\lambda)M,\end{gather*} where $a+b=1+\lambda$, $M$ is a $(0)$-curve such that $M$ pass through the intersection point of $D_2$ and $D_3$ and $M\cdot D_2=M\cdot D_2=1$. Since $\lambda<2$, we see that there exist $a$ and $b$ such that $L$ is an effective divisor. Let $Y_1\rightarrow\tilde{X}$ be the blow up of the intersection point of $D_2$ and $D_3$, and $N_1$ be the exceptional divisor. Let $Y_2\rightarrow Y_1$ be the blow up of the intersection point of $N_1$ and the proper transform of $D_2$, and $N_2$ be the exceptional divisor. Let $Y_3\rightarrow Y_2$ be the blow up of the intersection point of $N_2$ and the proper transform of $N_1$, and $F$ be the exceptional divisor. Note that there exists $\PP^1$-fibration $g\colon Y_3\rightarrow\PP^1$ such that $g$ has only two singular fibers $C_1, C_2$ and $$C_1=\tilde{D}_1+\tilde{N}_2+2\tilde{D}_2+2\tilde{D}_5+2\tilde{E}_3+4\tilde{E}_2,$$ $$C_2=\tilde{D}_4+\tilde{N}_1+\tilde{M}+2\tilde{D}_3+2\tilde{D}_6+4\tilde{E}_1,$$ where $\tilde{D}_1,\ldots,\tilde{D}_6$ are the proper transforms of $D_1,\ldots, D_6$, $\tilde{E}_1, \tilde{E}_2, \tilde{E}_3$ are the proper transforms of $E_1,E_2,E_3$, $\tilde{N}_1, \tilde{N}_2$ are the proper transforms of $N_1, N_2$, $\tilde{M}$ is the proper transform of $M$. Then $X$ has an $H$-polar cylinder.

Assume that $H=-K_X+\lambda_1 \varphi(\hat{E}_1)+\lambda_2\varphi(\hat{E}_2)$, where $\hat{E}_1$ ia a $(-1)$-curve on $\tilde{X}$ that meets only $D_5$ and $\lambda_1<2$, $\hat{E}_2$ is a $(-1)$-curve on $\tilde{X}$ that meets only $D_6$ and does not meet $D_1,D_2,D_3,D_4, D_5$, $\lambda_2<2$. We may assume that $2>\lambda_1\geq \lambda_2>0$. We have the following configuration on $\tilde{X}$.\\
\begin{tikzpicture}
\draw  (0.5,0) -- (0.5,3) (0,0.5) -- (8,0.5) (7.5,0)--(7.5,4) (7.2,3.5)--(10,3.5) (4,1.5)--(6,1.5) (2,2.5)--(4,2.5); \draw [dashed] (3,0)--(3,3) (7,1.5)--(10,1.5) (5.5,0)--(5.5,2) ;
\draw (1,0.5) node [above] {$D_2$} (0.5,3) node [above] {$D_1$} (7.5,4) node [above] {$D_3$}  (10,3.5) node [right] {$D_4$} (6,1.5) node [right] {$D_6$}  (10,1.5) node [right] {$E_1$} (5.5,2) node [above] {$E_3$} (3,3) node [above] {$E_2$} (4,2.5) node [right] {$D_5$} ;
\end{tikzpicture}$$\text{Figure 23}$$
where dotted lines are $(-1)$-curves, $D_1,\ldots,D_6$ are $(-2)$-curve. Moreover, $\hat{E}_1\cdot E_3=\hat{E}_2\cdot E_2=1$, $\hat{E}_1\cdot E_2=\hat{E}_2\cdot E_3=0$, $E_1\cdot\hat{E}_1=E_1\cdot\hat{E}_2=0$. We have \begin{gather*}\varphi^*(H)\equiv L=aD_1+2aD_2+(2a-1-\frac{1}{2}\lambda_1-\lambda_2)D_5+\\
+2(2a-1-\frac{1}{2}\lambda_1-\lambda_2)E_2+(2a-1-\lambda_1-\frac{1}{2}\lambda_2)D_6+\\
+2(2a-1-\lambda_1-\frac{1}{2}\lambda_2)E_3+bD_4+2bD_3+(2b-1)E_1+\\
+(3b-1-2\lambda_1-2\lambda_2)M,\end{gather*} where $a+b=1+\lambda_1+\lambda_2$, $M$ is a $(0)$-curve such that $M$ pass through the intersection point of $D_2$ and $D_3$ and $M\cdot D_2=M\cdot D_2=1$. Assume that $L$ is an effective divisor. Let $Y_1\rightarrow\tilde{X}$ be the blow up of the intersection point of $D_2$ and $D_3$, and $N_1$ be the exceptional divisor. Let $Y_2\rightarrow Y_1$ be the blow up of the intersection point of $N_1$ and the proper transform of $D_2$, and $N_2$ be the exceptional divisor. Let $Y_3\rightarrow Y_2$ be the blow up of the intersection point of $N_2$ and the proper transform of $D_2$, and $N_3$ be the exceptional divisor. Let $Y_4\rightarrow Y_3$ be the blow up of the intersection point of $N_3$ and the proper transform of $N_2$, and $F$ be the exceptional divisor. We obtain a surface $Y_4$ with the following configuration\\
\begin{tikzpicture}
\draw  (3,0) -- (0,2) (0.5,1.5) -- (0.5,4) (0.2,3.2)--(2,5) (1,4.8)--(4,4.8) (6,0)--(8,3) (7.5,1.3)--(7.5,7) (8,6.5)--(5,6.5) (8.5,3)--(8.5,5.5) (9.5,6.5)--(9.5,5); \draw [dashed] (2,0.5) -- (7,0.5) (0,3)--(3,3) (1.8,3.2)--(0,5) (6.8,3.5)--(9,3.5) (7,6)--(10,6);
\draw (7.5,1.3) node [right] {$\tilde{D}_2$} (2,5) node [above] {$\tilde{D}_3$} (3.5,4.8) node [above] {$\tilde{D}_4$}  (5,6.5) node [below] {$\tilde{D}_1$} (0,5) node [above] {$\tilde{E}_1$} (9,3.5) node [right] {$\tilde{E}_3$} (3,3) node [right] {$\tilde{M}$} (7,0.5) node [right] {$F$} (0.5,2.2) node [right] {$\tilde{N}_1$} (1,1.5) node [right] {$\tilde{N}_2$} (6.8,1.1) node [left] {$\tilde{N}_3$} (8.6,3) node [below] {$\tilde{D}_6$}  (10,6) node [right] {$\tilde{E}_2$} (9.5,5) node [right] {$\tilde{D}_5$};
\end{tikzpicture}\\ where $\tilde{D}_1,\ldots,\tilde{D}_6$ are the proper transforms of $D_1,\ldots, D_6$, $\tilde{E}_1, \tilde{E}_2, \tilde{E}_3$ are the proper transforms of $E_1,E_2,E_3$, $\tilde{N}_1, \tilde{N}_2, \tilde{N}_3$ are the proper transforms of $N_1, N_2, N_3$, $\tilde{M}$ is the proper transform of $M$. Note that there exists $\PP^1$-fibration $g\colon Y_4\rightarrow\PP^1$ such that $g$ has only two singular fibers $C_1, C_2$ and $$C_1=\tilde{D}_1+\tilde{N}_3+2\tilde{D}_2+2\tilde{D}_5+2\tilde{D}_6+4\tilde{E}_2+4\tilde{E}_3,$$ $$C_2=\tilde{D}_4+\tilde{N}_2+3\tilde{N}_1+3\tilde{M}+2\tilde{D}_3+2\tilde{E}_1.$$ Then $X$ has an $H$-polar cylinder. On the other hand, $L$ is an effective divisor if and only if $a>\frac{1}{2}+\frac{1}{2}\lambda_1+\frac{1}{4}\lambda_2$ and $b>\frac{1}{3}+\frac{2}{3}\lambda_1+\frac{2}{3}\lambda_2$. We see that if $1+\frac{1}{2}\lambda_2>\lambda_1$, then there exist $a$ and $b$ such that $L$ is an effective divisor. So, we may assume that $1+\frac{1}{2}\lambda_2\leq\lambda_1$. In particular $\lambda_1>1$. We have the following configuration on $\tilde{X}$.
\begin{tikzpicture}
\draw  (0,0.5) -- (10,0.5) (8.5,0) -- (10.5,2) (10,1)--(10,5) (10.5,4)--(8.5,6) (0.5,2)--(0.5,5) (6.5,2)--(6.5,5); \draw [dashed] (0,4.5)--(2,4.5) (2,0) -- (0,3) (8,0) -- (6,3) (9,3)--(12,3) (4.5,4.5)--(7,4.5);
\draw (1.5,1.5) node [above] {$E_1$} (0.5,5) node [above] {$D_5$} (2,4.5) node [right] {$\hat{E}_1$} (7.5,1.5) node [above] {$E_2$} (6.5,5) node [above] {$D_6$} (12,3) node [right] {$E_3$} (0,0.5) node [above] {$D_1$} (10.5,2) node [right] {$D_2$} (10,5) node [above] {$D_3$} (8.5,6) node [above] {$D_4$}  (4.5,4.5) node [left] {$\hat{E}_2$};
\end{tikzpicture}$$\text{Figure 24}.$$ where dotted lines are $(-1)$-curves, $D_1,\ldots,D_6$ are $(-2)$-curves. We have $$\lambda_1 \hat{E}_1+\frac{1}{2}\lambda_1 D_5\equiv -\lambda_1 E_1-\frac{1}{2}\lambda_1 D_5+\lambda_1 D_2+\lambda_1 D_4+2\lambda_1 D_3+2\lambda_1 E_3,$$ $$\lambda_2 \hat{E}_2+\frac{1}{2}\lambda_2 D_6\equiv -\lambda_2 E_2-\frac{1}{2}\lambda_2 D_6+\lambda_2 D_2+\lambda_2 D_4+2\lambda_2 D_3+2\lambda_2 E_3.$$ We obtain \begin{gather*}\varphi^*(H)\equiv L=aD_1+(a-1-\frac{1}{2}\lambda_1)D_5+2(a-1-\frac{1}{2}\lambda_1)E_1+\\
+(a-1-\frac{1}{2}\lambda_2)D_6+2(a-1-\frac{1}{2}\lambda_2)E_2+(b+\lambda_1+\lambda_2)D_2+\\
+(b-1+\lambda_1+\lambda_2)D_4+(2b-2+2\lambda_1+2\lambda_2)D_3+\\
+(2b-3+2\lambda_1+2\lambda_2)E_3+bM,\end{gather*} where $a+b=2$, $M$ is a $(0)$-curve such that $M$ pass through the intersection point of $D_1$ and $D_2$ and $M\cdot D_1=M\cdot D_2=1$. Since $\lambda_1>1$, we see that there exist $a$ and $b$ such that $L$ is an effective divisor. Let $Y_1\rightarrow\tilde{X}$ be the blow up of the intersection point of $D_1$ and $D_2$, and $N$ be the exceptional divisor. Let $Y_2\rightarrow Y_1$ be the blow up of the intersection point of $N$ and the proper transform of $D_1$, and $F$ be the exceptional divisor. Note that there exists $\PP^1$-fibration $g\colon Y_2\rightarrow\PP^1$ such that $g$ has only two singular fibers $C_1, C_2$ and $$C_1=\tilde{D}_1+\tilde{D}_6+\tilde{D}_5+2\tilde{E}_1+2\tilde{E}_2,$$ $$C_2=\tilde{D}_4+\tilde{N}+\tilde{M}+\tilde{D}_2+2\tilde{D}_3+2\tilde{E}_3,$$ where $\tilde{D}_1,\ldots,\tilde{D}_6$ are the proper transforms of $D_1,\ldots, D_6$, $\tilde{E}_1, \tilde{E}_2, \tilde{E}_3$ are the proper transforms of $E_1,E_2,E_3$, $\tilde{N}$ is the proper transform of $N$, $\tilde{M}$ is the proper transform of $M$. Then $X$ has an $H$-polar cylinder.

Assume that $H=-K_X+\lambda_1 \varphi(\hat{E}_1)+\lambda\varphi(E)$, where $\hat{E}_1$ ia a $(-1)$-curve on $\tilde{X}$ that meets only $D_6$ and does not meet $D_1,D_2,D_3,D_4, D_5$, $\lambda_1<2$, $E$ is a $(-1)$-curve on $\tilde{X}$ that meets only $D_1$ and does not meet $D_2,D_3,D_4,D_5, D_6$, $\lambda<5$. We may assume that $\lambda>0$. Consider the configuration as in Figure 22. We have \begin{gather*}\varphi^*(H)\equiv L=aD_1+2aD_2+(2a-1-\frac{3}{5}\lambda)E_3+(2a-1-\lambda_1-\frac{3}{5}\lambda)D_5+\\
+2(2a-1-\lambda_1-\frac{3}{5}\lambda)E_2+bD_4+2bD_3+(2b-1-\frac{1}{2}\lambda_1-\frac{2}{5}\lambda)D_6+\\
+2(2b-1-\frac{1}{2}\lambda_1-\frac{2}{5}\lambda)E_1+(b-\lambda_1-\frac{6}{5}\lambda)M,\end{gather*} where $a+b=1+\lambda_1+\lambda$, $M$ is a $(0)$-curve such that $M$ pass through the intersection point of $D_2$ and $D_3$ and $M\cdot D_2=M\cdot D_2=1$. Assume that $\lambda+\lambda_1<1$. Then there exist $a$ and $b$ such that $L$ is an effective divisor. As above, $X$ has an $H$-polar cylinder. So, we may assume $\lambda+\lambda_1\geq 1$. Also, consider the configuration as in Figure 22. We have \begin{gather*}\varphi^*(H)\equiv L=(a+\frac{4}{5}\lambda)D_1+(a-1+\lambda)E+(a+1+\frac{3}{5}\lambda)D_2+aE_3+\\
+(a-\lambda_1)D_5+2(a-\lambda_1)E_2+(b+\frac{1}{5}\lambda)D_4+(2b+\frac{2}{5}\lambda)D_3+\\
+(2b-1-\frac{1}{2}\lambda_1)D_6+2(2b-1-\frac{1}{2}\lambda_1)E_1,\end{gather*} where $a+b=1+\lambda_1$. Since $\lambda+\lambda_1\geq 1$ and $\lambda_1<2$, we see that there exist $a$ and $b$ such that $L$ is an effective divisor. Let $Y_1\rightarrow\tilde{X}$ be the blow up of the intersection point of $D_2$ and $D_3$, and $N$ be the exceptional divisor. Let $Y_2\rightarrow Y_1$ be the blow up of the intersection point of $N$ and the proper transform of $D_2$, and $F$ be the exceptional divisor. Note that there exists $\PP^1$-fibration $g\colon Y_2\rightarrow\PP^1$ such that $g$ has only two singular fibers $C_1, C_2$ and $$C_1=\tilde{D}_1+\tilde{D}_2+\tilde{D}_5+\tilde{E}+2\tilde{E}_2+\tilde{E}_3,$$ $$C_2=\tilde{D}_4+\tilde{N}+2\tilde{D}_3+2\tilde{D}_6+4\tilde{E}_1,$$ where $\tilde{D}_1,\ldots,\tilde{D}_6$ are the proper transforms of $D_1,\ldots, D_6$, $\tilde{E}_1, \tilde{E}_2, \tilde{E}_3$, $\tilde{E}$ are the proper transforms of $E_1,E_2,E_3, E$, $\tilde{N}$ is the proper transform of $N$, $\tilde{M}$ is the proper transform of $M$. Then $X$ has an $H$-polar cylinder.

Assume that $H=-K_X+\mu C+\lambda \varphi(E)$, where $C$ is a $(0)$-curve and $\mu>0$, $0\leq\lambda<5$, $E$ is a $(-1)$-curve on $\tilde{X}$ that meets only $D_1$ and does not meet $D_2,D_3,D_4,D_5, D_6$. Put $\tilde{C}$ is the proper transform of $C$. Note that $$\tilde{C}\sim E+D_1+D_2+D_3+D_4+E',$$ where $E'$ is a $(-1)$-curve that meets only $D_4$ and does not meet $D_1,D_2,$ $D_3,D_5, D_6$. Consider the configuration as in Figure 22. Note that $E'\cdot E_1=E'\cdot E_2=0$ and $E'\cdot E_3=1$. We have \begin{gather*}\varphi^*(H)\equiv L=aD_1+2aD_2+(2a-1-2\mu-\frac{3}{5}\lambda)E_3+\\
+(2a-1-\mu-\frac{3}{5}\lambda)D_5+2(2a-1-\mu-\frac{3}{5}\lambda)E_2+bD_4+2bD_3+\\
+(2b-1-\mu-\frac{2}{5}\lambda)D_6+2(2b-1-\mu-\frac{2}{5}\lambda)E_1+(b-2\mu-\frac{6}{5}\lambda)M,\end{gather*} where $a+b=1+2\mu+\lambda$, $M$ is a $(0)$-curve such that $M$ pass through the intersection point of $D_2$ and $D_3$ and $M\cdot D_2=M\cdot D_2=3$. Assume that $2\mu+\lambda<1$. Then there exist $a$ and $b$ such that $L$ is an effective divisor. As above, $X$ has an $H$-polar cylinder. So, we may assume that $2\mu+\lambda\geq 1$. Also, consider the configuration as in Figure 22. We have \begin{gather*}\varphi^*(H)\equiv L=(a+\frac{4}{5}\lambda)D_1+(a-1+\lambda)E+(a+1+\frac{3}{5}\lambda)D_2+\\
+(a-2\mu)E_3+(a-\mu)D_5+2(a-\mu)E_2+(b+\frac{1}{5}\lambda)D_4+\\
+(2b+\frac{2}{5}\lambda)D_3+(2b-1-\mu)D_6+2(2b-1-\mu)E_1,\end{gather*} where $a+b=1+2\mu$. Assume that $\mu<1$. Since $2\mu+\lambda\geq 1$, we see that there exist $a$ and $b$ such that $L$ is an effective divisor. As above, $X$ has an $H$-polar cylinder. So, we may assume that $\mu\geq 1$. We have the following configuration on $\tilde{X}$.\\
\begin{tikzpicture}
\draw  (0.5,0) -- (0.5,3) (0,0.5) -- (8,0.5) (7.5,0)--(7.5,4) (7.2,3.5)--(10,3.5) (3,1.5)--(4,1.5) (6.5,1)--(6.5,3); \draw [dashed](6,1.5)--(8,1.5) (3.5,0)--(3.5,2) (0,2.5)--(2,2.5) (9.5,4)--(9.5,1);
\draw (1,0.5) node [above] {$D_2$} (0.5,3) node [above] {$D_1$} (7.5,4) node [above] {$D_3$}  (10,3.5) node [right] {$D_4$} (4,1.5) node [right] {$D_5$}  (6,1.5) node [left] {$E_1$} (3.5,2) node [above] {$E_2$} (6.5,3) node [right] {$D_6$} (2,2.5) node [right] {$E$} (9.5,1) node [right] {$E'$};
\end{tikzpicture}$$\text{Figure 25}$$ where dotted lines are $(-1)$-curves, $D_1,\ldots,D_6$ are $(-2)$-curves. We have \begin{gather*}\varphi^*(H)\equiv L=(a-2+\mu+\lambda)E+(a-1+\mu+\frac{4}{5}\lambda)D_1+\\
+(a+\mu+\frac{3}{5}\lambda)D_2+(a-1)D_5+2(a-1)E_2+(b+\mu+\frac{2}{5}\lambda)D_3+\\
+(b-1+\mu+\frac{1}{5}\lambda)D_4+(b-1)D_6+2(b-1)E_1+(b-2+\mu)E,\end{gather*} where $a+b=3$. Since $\mu\geq 1$, we see that there exist $a$ and $b$ such that $L$ is an effective divisor. Let $Y\rightarrow\tilde{X}$ be the blow up of the intersection point of $D_2$ and $D_3$, and $F$ be the exceptional divisor. Note that there exists $\PP^1$-fibration $g\colon Y_2\rightarrow\PP^1$ such that $g$ has only two singular fibers $C_1, C_2$ and $$C_1=\tilde{D}_1+\tilde{D}_2+\tilde{D}_5+\tilde{E}+2\tilde{E}_2,$$ $$C_2=\tilde{D}_4+\tilde{E'}+\tilde{D}_3+\tilde{D}_6+2\tilde{E}_1,$$ where $\tilde{D}_1,\ldots,\tilde{D}_6$ are the proper transforms of $D_1,\ldots, D_6$, $\tilde{E}_1, \tilde{E}_2, \tilde{E}_3$, $\tilde{E}, \tilde{E'}$ are the proper transforms of $E_1,E_2,E_3, E, E'$. Then $X$ has an $H$-polar cylinder.
\end{proof}

\begin{lemma}
\label{LemA5}
Let $X$ be a del Pezzo surface with du Val singularities and let $H$ be an ample divisor on $X$. Assume that $X$ has a unique singular point $P$. Assume that $P$ is of type $A_5$. Then $X$ has an $H$-polar cylinder.
\end{lemma}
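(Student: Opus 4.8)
The plan is to proceed exactly as in Lemmas \ref{LemA4}, \ref{LemA4A1} and \ref{LemA4A1A1}. Let $\varphi\colon\tilde X\to X$ be the minimal resolution of $X$; since $P$ is the only singular point and it is of type $A_5$, the exceptional locus of $\varphi$ is a chain $D=D_1+D_2+D_3+D_4+D_5$ of five $(-2)$-curves, and since $\deg X=1$ we have $\rho(\tilde X)=9$, hence $\rho(X)=9-5=4$ and the Fujita rank satisfies $r_H\le 3$. If $r_H=0$ then $-K_X\equiv\mu_H H$, so $X$ has an $H$-polar cylinder if and only if it has a $(-K_X)$-polar cylinder; the latter holds by Theorem \ref{theorem:cylinders:-K_S}, because a del Pezzo surface of degree $1$ with an $A_5$ point is not among the surfaces listed in part I of that theorem. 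So we may assume $1\le r_H\le 3$, and then either $H$ is of type $B(r_H)$, so that $\mu_H H\equiv -K_X+\sum_{i=1}^{r_H}a_iE_i$ with the $E_i$ curves spanning the Fujita face and $0<a_i<1$, or $H$ is of type $C(r_H)$, so that $\mu_H H\equiv -K_X+aC+\sum b_iE_i$ with $C$ a $(0)$-curve, $a>0$, and $0\le b_i<1$. Rescaling, it suffices to construct an $H$-polar cylinder when $H\equiv -K_X+\sum\lambda_i\varphi(E_i)$, with at most one summand replaced by $\mu C$, where the $E_i$ are $(-1)$-curves on $\tilde X$ and $C$ is a $(0)$-curve on $\tilde X$. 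As in the earlier lemmas, when a $(-1)$-curve $E_i$ meets the $A_5$-chain at an end node it contributes the du Val pullback $\varphi^*(E_i)=\hat E_i+\tfrac56 D_1+\tfrac46 D_2+\tfrac36 D_3+\tfrac26 D_4+\tfrac16 D_5$ (suitably reindexed), and a $(0)$-curve $C$ meeting $D$ satisfies a relation $\tilde C\sim E+D_1+D_2+D_3+D_4+D_5+E'$ with $E,E'$ two $(-1)$-curves, which we use to rewrite $\varphi^*(C)$.

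The body of the proof is the resulting finite case analysis. By the classification of weak del Pezzo surfaces of degree $1$ with an $A_5$ singularity there are only finitely many configurations of negative curves on $\tilde X$, hence finitely many possibilities for the family $\{E_i\}$ (and the optional $C$): how many curves occur ($1$, $2$ or $3$), which of their images meet the chain $D$, and at which node. In each configuration I would (a) record the configuration on $\tilde X$ in a figure; (b) exhibit a $\PP^1$-fibration $g\colon\tilde X\to\PP^1$, either directly or, in several subcases, after a short tower of blow-ups $\bar X\to\tilde X$ at the point where a prospective section is tangent to a fibre (which turns $\FF_2$ into $\FF_1$), and list its one, two, or three reducible fibres as explicit combinations of the $D_i$, the $E_i$ (or $\hat E_i$) and the exceptional curves of the tower; and (c) write down an explicit effective divisor $L\equiv\varphi^*(H)$ which is a nonnegative combination of all of $D_1,\dots,D_5$ together with the remaining components and small positive multiples (the usual $\epsilon$) of the boundary components of the chosen fibration, arranged so that $\Supp(L)$ is exactly the unique $(-n)$-curve $M$ plus finitely many fibres of $g$. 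This gives $\tilde X\setminus\Supp(L)\cong\FF_n\setminus(M+F_1+\cdots+F_k)$ with $n\in\{1,2\}$, which is a cylinder; since $L$ contains the entire exceptional divisor $D$, the morphism $\varphi$ maps $\tilde X\setminus\Supp(L)$ isomorphically onto $X\setminus\Supp(\varphi(L))$, and $\varphi(L)\equiv H$ is effective, so $X$ has an $H$-polar cylinder.

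The main obstacle, and the reason the analysis is lengthy, is exactly the one met in Lemmas \ref{LemA4} and \ref{LemA4A1}: the first natural choice of $L$ and of the ruling is effective only when the coefficients of the attached $(-1)$-curves are small relative to explicit thresholds — inequalities of the shape $\lambda_{i_1}+\lambda_{i_2}-\lambda_{i_3}<1$, $\lambda_i\le\tfrac12$, $\lambda+2\mu\le 1$, and so on — and in the complementary ranges one must switch to a different $\PP^1$-fibration. This is done by eliminating a curve $\hat E_i$ through a relation $\hat E_i\equiv -E_i'+(\text{combination of the }D_j\text{ and another }(-1)\text{-curve})$, or by introducing the auxiliary $(0)$-curve $C$, and then verifying effectivity of the new $L$ under the opposite inequalities; since here $\rho(X)=4$ there are a few nested layers of this (notably the regime in which all of $\lambda_1\ge\lambda_2\ge\lambda_3$ are large, pushed to the boundary values $\lambda_1=\lambda_2$ and $\lambda_1+\lambda_2-\lambda_3=1$), but no new phenomenon arises. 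I would organise the proof as a sequence of such subcases, each a figure together with a ruling and an explicit $L$, and expect that the only genuine work is the careful bookkeeping of the $A_5$ pullback coefficients $\tfrac56,\tfrac46,\tfrac36,\tfrac26,\tfrac16$ along the chain.
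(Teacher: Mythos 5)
Your setup and strategy coincide with the paper's: reduce via the Fujita-face decomposition to $H\equiv -K_X+\sum\lambda_i\varphi(E_i)$ (type $B$) or $-K_X+\mu C+\dots$ (type $C$), pull back to the minimal resolution using the $A_5$ coefficients $\tfrac56,\tfrac46,\tfrac36,\tfrac26,\tfrac16$, and in each configuration exhibit an effective $L\equiv\varphi^*(H)$ whose support is the exceptional chain together with fibres (and the negative section) of a $\PP^1$-fibration, possibly constructed after a short tower of blow-ups. This is exactly how the paper argues, with Figures 26 and 27 and explicit divisors $L$.

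The problem is that you have written the plan, not the proof. Items (a)--(c) of your "body of the proof" are never carried out: no configuration of negative curves is exhibited, no fibration with its reducible fibres is written down, and no candidate $L$ is produced, so the crucial effectivity claims are only asserted. The assertion that "no new phenomenon arises" is precisely what must be checked, and it is where the paper spends all its effort: for instance, in the type $C$ regime the paper must split into $\lambda_1+\lambda+2\mu<1$ versus $\geq 1$, and within the latter again into $\mu<1$ versus $\mu\geq 1$; the last case forces a switch to a different configuration (Figure 27) and a fibration with \emph{three} singular fibres on an $\FF_2$ model, a pattern your outline (which only anticipates the $\FF_1$/two-fibre mechanism of Lemmas \ref{LemA4}--\ref{LemA4A1A1}) does not predict. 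Similarly, in the type $B$ case the relations $\lambda_i\tilde E_i\equiv -E_i'+\lambda_i(D_3+D_5+2D_4+2E_5)$ used to cross the threshold $\lambda_1+\lambda\geq 1$ depend on the actual incidence of the $(-1)$-curves with the chain, which you have not determined. Until you list the configurations allowed by the degree-$1$, $A_5$ geometry and verify in each parameter regime that an effective $L$ with the required support exists, the lemma is not proved; the bookkeeping you defer is the content of the argument.
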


\begin{proof}
Let $\varphi\colon\tilde{X}\rightarrow X$ be the minimal resolution of singularities of $X$ and let $D=\sum_{i=1}^5 D_i$ be the exceptional divisor of $\varphi$. Assume that $H=-K_X+\lambda_1 \hat{E}_1+\lambda_2 \hat{E}_2+\lambda\varphi(E)$, where $\hat{E}_1, \hat{E}_2$ are $(-1)$-curves and $\lambda_2\leq\lambda_1<1$, $\lambda<6$. Put $\tilde{E}_1$, $\tilde{E}_2$ are the proper transform of $\hat{E}_1, \hat{E}_2$. We have the following configuration on $\tilde{X}$.\\
\begin{tikzpicture}
\draw  (0.5,0) -- (0.5,3) (0,0.5) -- (8,0.5) (7.5,0)--(7.5,4) (7.2,3.5)--(11,3.5)  (10.5,4)--(10.5,1); \draw [dashed](6,1.5)--(8,1.5) (6,2.5)--(8,2.5) (3.5,0)--(3.5,3) (3.3,2)--(4,4.3)(4.5,0)--(4.5,3) (4.3,2)--(5,4.3) (0,2.5)--(2,2.5) (9.5,4)--(9.5,1);
\draw (1,0.5) node [above] {$D_2$} (0.5,3) node [above] {$D_1$} (7.5,4) node [above] {$D_3$}  (11,3.5) node [right] {$D_4$}  (6,1.5) node [left] {$E_3$} (6,2.5) node [left] {$E_4$}  (3.5,1.5) node [left] {$E_1$} (4.5,1.5) node [left] {$E_2$}   (2,2.5) node [right] {$E$} (10.5,1) node [right] {$D_5$} (9.5,1) node [right] {$E_5$} (4,4.3) node [right] {$\tilde{E}_1$} (5,4.3) node [right] {$\tilde{E}_2$};
\end{tikzpicture}$$\text{Figure 26}$$ where dotted lines are $(-1)$-curves, $D_1,\ldots,D_5$ are $(-2)$-curves. Moreover, $E\cdot E_3=1$ $E\cdot E_1=E\cdot E_2=E\cdot E_4=E\cdot E_5=0$. We have $$\lambda_1\tilde{E}_1\equiv-\lambda_1 E_1+\lambda_1 D_3+\lambda_1 D_5+2\lambda_1 D_4+2\lambda_1 E_5,$$ $$\lambda_2\tilde{E}_2\equiv-\lambda_2 E_2+\lambda_2 D_3+\lambda_2 D_5+2\lambda_2 D_4+2\lambda_2 E_5.$$ Then \begin{gather*}\varphi^*(H)\equiv L=aD_1+2aD_2+(2a-1-\lambda_1-\frac{4}{6}\lambda)E_1+\\
+(2a-1-\lambda_2-\frac{4}{6}\lambda)E_2+(b+\lambda_1+\lambda_2)D_5+(2b+2\lambda_1+2\lambda_2)D_4+\\
+(2b-1+2\lambda_1+2\lambda_2-\frac{2}{6}\lambda)E_5+(b+1+\lambda_1+\lambda_2+\frac{2}{6}\lambda)D_3+\\
+(b-\frac{7}{6}\lambda)E_3+(b-\frac{1}{6}\lambda)E_4,\end{gather*} where $a+b=1+\lambda$. Assume that $\lambda_1+\lambda<1$. Then there exist $a$ and $b$ such that $L$ is an effective divisor. Let $Y_1\rightarrow\tilde{X}$ be the blow up of the intersection point of $D_2$ and $D_3$, and $N$ be the exceptional divisor. Let $Y_2\rightarrow Y_1$ be the blow up of the intersection point of $N$ and the proper transform of $D_3$, and $F$ be the exceptional divisor. Note that there exists $\PP^1$-fibration $g\colon Y_2\rightarrow\PP^1$ such that $g$ has only two singular fibers $C_1, C_2$ and $$C_1=\bar{D}_1+2\bar{D}_2+\bar{N}+2\bar{E}_1+2\bar{E}_2,$$ $$C_2=\bar{D}_5+\bar{D}_3+2\bar{D}_4+2\bar{E}_5+\bar{E}_3+\bar{E}_4,$$ where $\bar{D}_1,\ldots,\bar{D}_5$ are the proper transforms of $D_1,\ldots, D_5$, $\bar{E}_1, \ldots, \bar{E}_5$ are the proper transforms of $E_1,\ldots,E_5$, $\tilde{N}$ is the proper transform of $N$. Then $X$ has an $H$-polar cylinder. In particular, if $\lambda=0$ then $X$ has an $H$-polar cylinder. Assume that $\lambda+\lambda_1\geq 1$. We have \begin{gather*}\varphi^*(H)\equiv L=(a+\frac{5}{6}\lambda)D_1+(a-1+\lambda)E+(a+1+\frac{4}{6}\lambda)D_2+\\
+(a-\lambda_1)E_1+(a-\lambda_2)E_2+(b+\lambda_1+\lambda_2+\frac{1}{6}\lambda)D_5+\\
+(2b+2\lambda_1+2\lambda_2+\frac{2}{6}\lambda)D_4+(2b-1+2\lambda_1+2\lambda_2)E_5+\\
+(b+1+\lambda_1+\lambda_2+\frac{3}{6}\lambda)D_3+bE_4,\end{gather*} where $a+b=1$. Since $\lambda+\lambda_1\geq 1$, we see that there exist $a$ and $b$ such that $L$ is an effective divisor. Let $Y\rightarrow\tilde{X}$ be the blow up of the intersection point of $D_2$ and $D_3$, and $F$ be the exceptional divisor. Note that there exists $\PP^1$-fibration $g\colon Y\rightarrow\PP^1$ such that $g$ has only two singular fibers $C_1, C_2$ and $$C_1=\bar{D}_1+2\bar{D}_2+\bar{E}_1+\bar{E}_2+\bar{E},$$ $$C_2=\bar{D}_5+\bar{D}_3+2\bar{D}_4+2\bar{E}_5+\bar{E}_4,$$ where $\bar{D}_1,\ldots,\bar{D}_5$ are the proper transforms of $D_1,\ldots, D_5$, $\bar{E}_1, \ldots, \bar{E}_5, \bar{E}$ are the proper transforms of $E_1,\ldots,E_5, E$. Then $X$ has an $H$-polar cylinder.

Assume that $H=-K_X+\mu C+\lambda \varphi(E)+\hat{E}_1$, where $C$ is a $(0)$-curve and $\mu>0$, $0\leq\lambda<6$, $0\leq \lambda_1<1$, $E$ is a $(-1)$-curve on $\tilde{X}$ that meets only $D_1$ and does not meet $D_2,D_3,D_4,D_5$, $\hat{E}_1$ is a $(-1)$-curve on $X$. Put $\tilde{E}_1$ is the proper transform of $\hat{E}_1$. Put $\tilde{C}$ is the proper transform of $C$. Note that $$\tilde{C}\sim E+D_1+D_2+D_3+D_4+D_5+E',$$ where $E'$ is a $(-1)$-curve that meets only $D_5$ and does not meet $D_1,D_2,$ $D_3,D_4$. Consider the configuration as in Figure 26. Note that \\$E'\cdot E_1=E'\cdot E_3=1$, $E'\cdot E_2=E'\cdot E_4=E'\cdot E_5=0$. We have \begin{gather*}\varphi^*(H)\equiv L=aD_1+2aD_2+(2a-1-\lambda_1-\frac{4}{6}\lambda-2\mu)E_1+\\
+(2a-1-\frac{4}{6}\lambda-\mu)E_2+bD_5+2bD_4+(2b-1-\frac{2}{6}\lambda-\mu)E_5+\\
+(b+1+\frac{2}{6}\lambda+\mu)D_3+(b-\lambda_1-\frac{7}{6}\lambda-2\mu)E_3+(b-\lambda_1-\frac{1}{6}\lambda)E_4,\end{gather*} where $a+b=1+\lambda+\lambda_1+2\mu$. Note that $L$ is an effective divisor if and only if $a>\frac{1}{2}+\frac{1}{2}\lambda_1+\frac{2}{6}\lambda+\mu$ and $b>\lambda_1+\frac{7}{6}\lambda+2\mu$. We see that if $\lambda_1+\lambda+2\mu<1$ then there exist $a$ and $b$ such that $L$ is an effective divisor. As above, $X$ has an $H$-polar cylinder. So, we may assume that $\lambda_1+\lambda+2\mu\geq 1$. Also, consider the configuration as in Figure 26. We have \begin{gather*}\varphi^*(H)\equiv L=(a+\frac{5}{6}\lambda)D_1+(a-1+\lambda)E+(a+1)D_2+\\
+(a-\lambda_1-2\mu)E_1+(a-\mu)E_2+(b+\frac{1}{6}\lambda)D_5+(2b+\frac{2}{6}\lambda)D_4+\\
+(2b-1-\mu)E_5+(b+1+\frac{3}{6}\lambda+\mu)D_3+(b-\lambda_1)E_4,\end{gather*} where $a+b=1+\lambda_1+2\mu$. Since $\lambda_1+\lambda+2\mu\geq 1$, we see that $L$ is an effective divisor if and only if $$\begin{cases} a>\lambda_1+2\mu\\
b>\frac{1}{2}+\frac{1}{2}\mu\\
b>\lambda_1.\end{cases}$$ Assume that $\lambda_1\geq\frac{1}{2}+\frac{1}{2}\mu$. Since $\lambda_1<1$, we see that there exist $a$ and $b$ such that $L$ is an effective divisor. As above, $X$ has an $H$-polar cylinder. Assume that $\lambda_1<\frac{1}{2}+\frac{1}{2}\mu$. We see that if $\mu<1$, then there exist $a$ and $b$ such that $L$ is an effective divisor. As above, $X$ has an $H$-polar cylinder. So, we may assume that $\mu\geq 1$. We have the following configuration on $\tilde{X}$.\\
\begin{tikzpicture}
\draw  (0.5,0) -- (0.5,4) (0,0.5) -- (8,0.5) (0,3)--(2,3) (7.5,0)--(7.5,4) (7.2,3.5)--(11,3.5); \draw [dashed](0,2)--(2,2) (1.5,2.5)--(1.5,4) (6,1.5)--(8,1.5) (4.5,0)--(4.5,3) (4.3,2)--(5,4.3) (9.5,4)--(9.5,1);
\draw (1,0.5) node [above] {$D_3$} (0.5,4) node [above] {$D_2$} (7.5,4) node [above] {$D_4$}  (11,3.5) node [right] {$D_5$}  (6,1.5) node [left] {$E_3$}  (4.5,1.5) node [left] {$E_1$}   (2,2) node [right] {$E_2$} (2,3) node [right] {$D_1$} (9.5,1) node [right] {$E'$} (5,4.3) node [right] {$\tilde{E}_1$} (1.5,4)  node [above] {$E$};
\end{tikzpicture}$$\text{Figure 27}$$ where dotted lines are $(-1)$-curves, $D_1,\ldots,D_5$ are $(-2)$-curves. We have \begin{gather*}\varphi^*(H)\equiv L=(2+\frac{3}{6}\lambda+\mu)D_3+(a+\frac{4}{6}\lambda+\mu)D_2+\\
+(a-1+\frac{5}{6}\lambda+\mu)D_1+(a-1)E_2+(a-2+\lambda+\mu)E\\
+(b+\frac{2}{6}\lambda+\mu)D_4+(b-1+\frac{1}{6}\lambda+\mu)D_5+(b-1)E_3+(b-2+\mu)E'+\\
+cE_1+(c-1+\lambda_1)\tilde{E}_1,\end{gather*} where $a+b+c=4$. Since $\mu\geq 1$, we see that there exist $a,b,c$ such that $L$ is an effective divisor. On the other hand, $$\tilde{X}\backslash\Supp(L)\cong X\backslash\Supp(\varphi(L))\cong\FF_2\backslash(K+F_1+F_2+F_3),$$ where $K$ is a unique $(-2)$-curve and $F_1, F_2, F_3$ are fibers. Then $X$ has an $H$-polar cylinder.
\end{proof}

\begin{lemma}
\label{LemA5A1}
Let $X$ be a del Pezzo surface with du Val singularities and let $H$ be an ample divisor on $X$. Assume that $X$ has the following collection of singularities $A_5+A_1$. Then $X$ has an $H$-polar cylinder.
\end{lemma}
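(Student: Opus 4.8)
The plan is to run the same machine as in Lemmas~\ref{Lemm3}--\ref{LemA5}. Pass to the minimal resolution $\varphi\colon\tilde{X}\to X$ and write $D=\sum_{i=1}^{6}D_i$ for the exceptional divisor, with $D_1,\dots,D_5$ forming the $A_5$-chain and $D_6$ the $(-2)$-curve over the $A_1$ point. Since $\deg X=1$ and $X$ carries exactly six $(-2)$-curves, $\rho(X)=3$, so the Fujita rank $r_H$ is at most $2$; the rank $0$ case reduces to the hypothesis that $X$ has a $-K_X$-polar cylinder, the rank $1$ case ($H\equiv-K_X+\lambda E$ with a single curve) is handled exactly as one of the subcases below, and the bulk of the work is in $r_H=2$, where $H$ is of type $B(2)$ or of type $C(2)$.

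In the type $B(2)$ case one has $\mu_H^{-1}H\equiv-K_X+\lambda_1E_1+\lambda_2E_2$, where each $E_i$ is either a $(-1)$-curve on $X$, or the image $\varphi(E)$ of a $(-1)$-curve $E$ on $\tilde{X}$ meeting one end of the $A_5$-chain, in which case $\varphi^*\varphi(E)=E+\frac{5}{6}D_{i_1}+\frac{4}{6}D_{i_2}+\frac{3}{6}D_{i_3}+\frac{2}{6}D_{i_4}+\frac{1}{6}D_{i_5}$, or a curve meeting $D_6$ (with $\varphi^*\varphi(E)=E+\frac12 D_6$). First I would enumerate the finitely many incidence patterns of $E_1,E_2$ with the configuration $D_1,\dots,D_6$ and with the remaining $(-1)$-curves on $\tilde{X}$, the latter being dictated by the position of the two singular points on a weak del Pezzo surface of degree $1$ of type $A_5+A_1$ (the relevant classification is in \cite{Fur}, \cite{Ma}, \cite{Ye}). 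For each pattern I would draw the dual graph on $\tilde{X}$, compute $\varphi^*(H)$, and write down an effective $\QQ$-divisor $L\equiv\varphi^*(H)$ supported on that graph with free parameters normalised by $a+b=1+\lambda_1+\lambda_2$ (or $a+b+c=3$). When the $\lambda_i$ are small, the direct choice of parameters already makes $L$ effective and yields a $\PP^1$-fibration $g\colon\tilde{X}\to\PP^1$, read off from the chain, with $\tilde{X}\setminus\Supp(L)\cong\FF_2\setminus(M+\text{fibres})$; when some $\lambda_i$ exceeds a threshold one rewrites $\lambda_iE_i$ using the linear equivalence trading it for the complementary part of a fibre of $g$ (as in Lemma~\ref{LemA5}), after which $L$ becomes effective, possibly only after blowing up one or two intersection points of the chain, giving $\FF_1\setminus(M+\text{fibres})$. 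In each case the complement is a cylinder.

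In the type $C(2)$ case $\mu_H^{-1}H\equiv-K_X+aC+bE$ with $C$ a $0$-curve (a fibre class of the conic bundle $\phi_H$) and $E$ a $(-1)$-curve contained in a fibre (possibly $b=0$); here one uses the linear equivalence of $\tilde{C}$ with a chain of the form $E'+D_{i_1}+\cdots+D_{i_5}+E''$ (or the analogous relation through $D_6$) to split off the $aC$-contribution, and then proceeds pattern by pattern as before, producing an effective $L\equiv\varphi^*(H)$ and a $\PP^1$-fibration with two or three singular fibres, so that $\Supp(L)$ has complement $\FF_n\setminus(M+\text{fibres})$ for $n\in\{1,2\}$. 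The main obstacle is entirely bookkeeping: correctly listing the incidence patterns of the boundary $(-1)$-curves with the $A_5+A_1$ configuration, and, in the parameter ranges where the naive choice of $a,b$ (or $\mu$) fails, identifying the right linear-equivalence substitution and the right sequence of blow-ups of chain intersection points so that the resulting divisor is visibly effective with complement a product $Z\times\AAA^1$. No idea beyond those already used in Lemmas~\ref{LemA4}--\ref{LemA5} is required; the computations are only longer because the $A_5$-chain has length five and carries the extra $A_1$ node.
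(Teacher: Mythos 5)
Your outline follows exactly the method the paper uses for this lemma, but as submitted it is a plan rather than a proof: every step that constitutes the actual mathematical content is deferred to ``bookkeeping'' that is never carried out. The paper's proof of Lemma \ref{LemA5A1} consists precisely of that bookkeeping: it treats three explicit shapes of $H$, namely $-K_X+\lambda_1\hat{E}_1+\lambda_2\hat{E}_2$ with both curves $(-1)$-curves on $X$, $-K_X+\lambda_1\hat{E}_1+\lambda\varphi(E)$ with $E$ a $(-1)$-curve on $\tilde{X}$ meeting only $D_1$, and $-K_X+\lambda_1\varphi(E')+\lambda\varphi(E)$ with $E'$ meeting only $D_6$; for each it exhibits the concrete incidence configuration (Figures 28--29), writes an explicit $L\equiv\varphi^*(H)$, proves effectiveness by splitting into the parameter ranges $\lambda_1+\lambda<1$ and $\lambda_1+\lambda\geq 1$ (respectively $\tfrac12\lambda_1+\lambda<1$ or $\geq 1$) using the substitutions $\lambda_1\tilde{E}_1\equiv-\lambda_1E_1+\lambda_1D_3+\lambda_1D_5+2\lambda_1D_4+2\lambda_1E_4$ and its analogue through $D_6$, and then verifies, after one or two blow-ups at intersection points of the chain, the existence of a $\PP^1$-fibration with exactly two singular fibres supporting $L$, so that the complement is $\FF_1$ minus a section and fibres. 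None of this appears in your proposal: you do not enumerate the incidence patterns, you produce no explicit effective divisor, you never check that admissible $a,b$ exist in the threshold ranges (this is exactly where the constraints $\lambda_1<1$, $\lambda_1<2$, $\lambda<6$ enter and where an argument could fail), and you exhibit no fibration or blow-up sequence. Asserting that ``no idea beyond those already used in Lemmas \ref{LemA4}--\ref{LemA5} is required'' is a prediction, not a demonstration; for a lemma whose entire content is the case-by-case construction, this is a genuine gap.

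Two smaller points. First, your type $C(2)$ discussion does not match the paper: the paper's case list for $A_5+A_1$ consists only of the three $B$-type shapes above, and if you believe a conic-bundle Fujita face genuinely occurs here you would have to either rule it out or actually construct the corresponding cylinder, neither of which your sketch does. Second, your reduction of the rank-$0$ and rank-$1$ cases is fine in principle (rank $1$ is the subcase $\lambda_2=0$, resp.\ $\lambda_1=0$, of the computations), but again it only has content once the rank-$2$ computations are actually performed.
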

\begin{proof}
Let $\varphi\colon\tilde{X}\rightarrow X$ be the minimal resolution of singularities of $X$ and let $D=\sum_{i=1}^6 D_i$ be the exceptional divisor of $\varphi$. We may assume that $D_1,\ldots,D_5$ correspond to the singular point of type $A_5$, $D_6$ corresponds to the singular point of type $A_1$. Assume that $H=-K_X+\lambda_1 \hat{E}_1+\lambda_2 \hat{E}_2$, where $\hat{E}_1, \hat{E}_2$ are $(-1)$-curves and $0\leq\lambda_2\leq\lambda<1$. We may assume that $\lambda_1>0$. Put $\tilde{E}_1, \tilde{E}_2$ are the proper transform of $\hat{E}_1, \hat{E}_2$. We have the following configuration on $\tilde{X}$.\\
\begin{tikzpicture}
\draw  (0.5,0) -- (0.5,3) (0,0.5) -- (8,0.5) (7.5,0)--(7.5,4) (7.2,3.5)--(11,3.5)  (10.5,4)--(10.5,1) (6.5,1)--(6.5,3); \draw [dashed](6,1.5)--(8,1.5) (3.5,0)--(3.5,3) (3.3,2)--(4,4.3)(4.5,0)--(4.5,3) (4.3,2)--(5,4.3) (9.5,4)--(9.5,1);
\draw (1,0.5) node [above] {$D_2$} (0.5,3) node [above] {$D_1$} (7.5,4) node [above] {$D_3$}  (11,3.5) node [right] {$D_4$}  (6,1.5) node [left] {$E_3$} (3.5,1.5) node [left] {$E_1$} (4.5,1.5) node [left] {$E_2$}   (10.5,1) node [right] {$D_5$} (9.5,1) node [right] {$E_4$} (4,4.3) node [right] {$\tilde{E}_1$} (5,4.3) node [right] {$\tilde{E}_2$} (6.5,3) node [above] {$D_6$};
\end{tikzpicture}$$\text{Figure 28}$$ where dotted lines are $(-1)$-curves, $D_1,\ldots,D_6$ are $(-2)$-curves. Note that $$\lambda_1\tilde{E}_1\equiv-\lambda_1 E_1+\lambda_1 D_3+\lambda_1 D_5+2\lambda_1 D_4+2\lambda_1 E_4,$$ $$\lambda_2\tilde{E}_2\equiv-\lambda_2 E_2+\lambda_2 D_3+\lambda_2 D_5+2\lambda_2 D_4+2\lambda_2 E_4.$$ We have \begin{gather*}\varphi^*(H)\equiv L=aD_1+2aD_2+(2a-1-\lambda_1)E_1+(2a-1-\lambda_2)E_2+\\
+(b+\lambda_1+\lambda_2)D_3+(b-1)D_6+2(b-1)E_3+(b-1+\lambda_1+\lambda_2)D_5+\\
+(2b-2+2\lambda_1+2\lambda_2)D_4+(2b-3+2\lambda_1+2\lambda_2)E_4,\end{gather*} where $a+b=2$. Since $\lambda_1>0$, we see that there exist $a$ and $b$ such that $L$ is an effective divisor. Let $Y_1\rightarrow\tilde{X}$ be the blow up of the intersection point of $D_2$ and $D_3$, and $N$ be the exceptional divisor. Let $Y_2\rightarrow Y_1$ be the blow up of the intersection point of $N$ and the proper transform of $D_3$, and $F$ be the exceptional divisor. Note that there exists $\PP^1$-fibration $g\colon Y_2\rightarrow\PP^1$ such that $g$ has only two singular fibers $C_1, C_2$ and $$C_1=\bar{D}_1+2\bar{D}_2+\bar{N}+2\bar{E}_1+2\bar{E}_2,$$ $$C_2=\bar{D}_5+\bar{D}_3+2\bar{D}_4+2\bar{E}_4+2\bar{E}_3+\bar{D}_6,$$ where $\bar{D}_1,\ldots,\bar{D}_6$ are the proper transforms of $D_1,\ldots, D_6$, $\bar{E}_1, \ldots, \bar{E}_4$ are the proper transforms of $E_1,\ldots,E_4$, $\bar{N}$ is the proper transform of $N$. Then $X$ has an $H$-polar cylinder.

Assume that $H=-K_X+\lambda_1 \hat{E}_1+\lambda\varphi(E)$, where $\hat{E}_1$ is a $(-1)$-curve, $E$ is a $(-1)$-curve on $\tilde{X}$ that meets only $D_1$ and does not meet $D_2,\ldots, D_6$ and $\lambda_1<1$, $\lambda<6$. Put $\tilde{E}_1$ is the proper transform of $\hat{E}_1$. We may assume that $\lambda>0$. We have the following configuration on $\tilde{X}$.\\
\begin{tikzpicture}
\draw  (0.5,0) -- (0.5,3) (0,0.5) -- (8,0.5) (7.5,0)--(7.5,4) (7.2,3.5)--(11,3.5)  (10.5,4)--(10.5,1) (3,2.5)--(5,2.5); \draw [dashed](6,1.5)--(8,1.5) (6,2.5)--(8,2.5) (3.5,0)--(3.5,3) (9.5,4)--(9.5,1) (0,2)--(2,2);
\draw (1,0.5) node [above] {$D_2$} (0.5,3) node [above] {$D_1$} (7.5,4) node [above] {$D_3$}  (11,3.5) node [right] {$D_4$}  (6,1.5) node [left] {$E_3$} (3.5,1.5) node [left] {$E_1$}   (10.5,1) node [right] {$D_5$} (9.5,1) node [right] {$E_4$} (4.5,2.5) node [above] {$D_6$} (6.5,2.5) node [above] {$E_2$} (2,2) node [right] {$E$};
\end{tikzpicture}$$\text{Figure 29}$$ where dotted lines are $(-1)$-curves, $D_1,\ldots,D_6$ are $(-2)$-curves. Morover, $\tilde{E}_1\cdot E_1=\tilde{E}_1\cdot E_4=1$, $\tilde{E}_1\cdot E_2=0$, $\tilde{E}_1\cdot E_3=2$, $E\cdot E_3=1$, $E\cdot E_1=E\cdot E_2=E\cdot E_4=0$. We have \begin{gather*}\varphi^*(H)\equiv L=aD_1+2aD_2+(2a-1-\lambda_1-\frac{4}{6}\lambda)D_6+\\
+2(2a-1-\lambda_1-\frac{4}{6}\lambda)E_1+bD_5+2bD_4+(2b-1-\lambda_1-\frac{2}{6}\lambda)E_4+\\
+(b+1+\lambda_1+\frac{2}{6}\lambda)D_3+(b+\lambda_1-\frac{1}{6}\lambda)E_2+(b-\lambda_1-\frac{7}{6}\lambda)E_3,\end{gather*} where $a+b=1+\lambda_1+\lambda$. Note that $L$ is an effective divisor if and only if $a>\frac{1}{2}+\frac{1}{2}\lambda_1+\frac{2}{6}\lambda$ and $b>\lambda_1+\frac{7}{6}\lambda$. Assume that $\lambda_1+\lambda<1$. We see that there exist $a$ and $b$ such that $L$ is an effective divisor. Let $Y_1\rightarrow\tilde{X}$ be the blow up of the intersection point of $D_2$ and $D_3$, and $N$ be the exceptional divisor. Let $Y_2\rightarrow Y_1$ be the blow up of the intersection point of $N$ and the proper transform of $D_3$, and $F$ be the exceptional divisor. Note that there exists $\PP^1$-fibration $g\colon Y_2\rightarrow\PP^1$ such that $g$ has only two singular fibers $C_1, C_2$ and $$C_1=\bar{D}_1+2\bar{D}_2+\bar{N}+2\bar{D}_6+4\bar{E}_1,$$ $$C_2=\bar{D}_5+\bar{D}_3+2\bar{D}_4+2\bar{E}_4+\bar{E}_2+\bar{E}_3,$$ where $\bar{D}_1,\ldots,\bar{D}_6$ are the proper transforms of $D_1,\ldots, D_6$, $\bar{E}_1, \ldots, \bar{E}_4$ are the proper transforms of $E_1,\ldots,E_4$, $\bar{N}$ is the proper transform of $N$. Then $X$ has an $H$-polar cylinder. So, we may assume that $\lambda_1+\lambda\geq 1$. We have \begin{gather*}\varphi^*(H)\equiv L=(a+\frac{5}{6}\lambda)D_1+(a-1+\lambda)E+(a+1+\frac{4}{6}\lambda)D_2+\\
+(a-\lambda_1)D_6+2(a-\lambda_1)E_1+(b+\frac{1}{6}\lambda)D_5+(2b+\frac{2}{6}\lambda)D_4+\\
+(2b-1-\lambda_1)E_4+(b+1+\lambda_1+\frac{3}{6}\lambda)D_3+(b+\lambda_1)E_2,\end{gather*} where $a+b=1+\lambda_1$. Since $\lambda_1+\lambda\geq 1$, we see that $L$ is an effective divisor if and only if $a>\lambda_1$ and $b>\frac{1}{2}+\frac{1}{2}\lambda_1$. Since $\lambda_1<1$, we see that there exist $a$ and $b$ such that $L$ is an effective divisor. Let $Y\rightarrow\tilde{X}$ be the blow up of the intersection point of $D_2$ and $D_3$, and $F$ be the exceptional divisor. Note that there exists $\PP^1$-fibration $g\colon Y\rightarrow\PP^1$ such that $g$ has only two singular fibers $C_1, C_2$ and $$C_1=\bar{E}+\bar{D}_1+\bar{D}_2+\bar{D}_6+2\bar{E}_1,$$ $$C_2=\bar{D}_5+\bar{D}_3+2\bar{D}_4+2\bar{E}_4+\bar{E}_2,$$ where $\bar{D}_1,\ldots,\bar{D}_6$ are the proper transforms of $D_1,\ldots, D_6$, $\bar{E}_1, \bar{E}_2, \bar{E}_4, \bar{E}$ are the proper transforms of $E_1,E_2,E_4, E$. Then $X$ has an $H$-polar cylinder.

Assume that $H=-K_X+\lambda_1 \varphi(E')+\lambda\varphi(E)$, where $E'$ is a $(-1)$-curve on $\tilde{X}$ that meets only $D_6$ and does not meet $D_1,\ldots, D_5$, $E$ is a $(-1)$-curve on $\tilde{X}$ that meets only $D_1$ and does not meet $D_2,\ldots, D_6$ and $\lambda_1<2$, $\lambda<6$. Consider the configuration as in Figure 29. Note that $E'\cdot E_1=E'\cdot E_4=0$, $E'\cdot E_2=E'\cdot E_3=1$. We have \begin{gather*}\varphi^*(H)\equiv L=aD_1+2aD_2+(2a-1-\frac{1}{2}\lambda_1-\frac{4}{6}\lambda)D_6+\\
+2(2a-1-\frac{1}{2}\lambda_1-\frac{4}{6}\lambda)E_1+bD_5+2bD_4+(2b-1-\frac{2}{6}\lambda)E_4+\\
+(b+1+\frac{2}{6}\lambda)D_3+(b-\lambda_1-\frac{1}{6}\lambda)E_2+(b-\lambda_1-\frac{7}{6}\lambda)E_3,\end{gather*} where $a+b=1+\lambda_1+\lambda$. Note that $L$ is an effective divisor if and only if $a>\frac{1}{2}+\frac{1}{4}\lambda_1+\frac{2}{6}\lambda$ and $b>\lambda_1+\frac{7}{6}\lambda$. Assume that $\frac{1}{2}\lambda_1+\lambda<1$. We see that there exist $a$ and $b$ such that $L$ is an effective divisor. As above, $X$ has an $H$-polar cylinder. So, we may assume that $\frac{1}{2}\lambda_1+\lambda\geq 1$.  Also, consider the configuration as in Figure 29. We have \begin{gather*}\varphi^*(H)\equiv L=(a+\frac{5}{6}\lambda)D_1+(a-1+\lambda)E+(a+1+\frac{4}{6}\lambda)D_2+\\
+(a-\frac{1}{2}\lambda_1)D_6+2(a-\frac{1}{2}\lambda_1)E_1+(b+\frac{1}{6}\lambda)D_5+(2b+\frac{2}{6}\lambda)D_4+\\
+(2b-1)E_4+(b+1+\frac{3}{6}\lambda)D_3+(b-\lambda_1)E_2,\end{gather*} where $a+b=1+\lambda_1$. Since $\frac{1}{2}\lambda_1+\lambda\geq 1$ and $\lambda_1<2$, we see that there exist $a$ and $b$ such that $L$ is an effective divisor. As above, $X$ has an $H$-polar cylinder.
\end{proof}

\begin{lemma}
\label{LemA5A1A1}
Let $X$ be a del Pezzo surface with du Val singularities and let $H$ be an ample divisor on $X$. Assume that $X$ has the following collection of singularities $A_5+A_1+A_1$. Then $X$ has an $H$-polar cylinder.
\end{lemma}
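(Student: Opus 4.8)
The plan is to argue exactly as in Lemmas~\ref{LemA5} and~\ref{LemA5A1}. Let $\varphi\colon\tilde X\to X$ be the minimal resolution and $D=\sum_{i=1}^{7}D_i$ its exceptional locus, where $D_1,\dots,D_5$ is the chain of $(-2)$-curves over the $A_5$-point and $D_6$, $D_7$ are the two disjoint $(-2)$-curves over the two $A_1$-points. Since $\deg X=1$, the resolution $\tilde X$ is a weak del Pezzo surface with $\rho(\tilde X)=9$, and the sum of the ranks of the singularities is $5+1+1=7$, so $\rho(X)=9-7=2$. If $r_H=0$ then $-K_X\equiv\mu_H H$, so an $H$-polar cylinder exists if and only if a $(-K_X)$-polar cylinder does; since the collection $A_5+A_1+A_1$ does not appear in Theorem~\ref{theorem:cylinders:-K_S}(I), $X$ has a $(-K_X)$-polar cylinder and we are done. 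Hence I may assume $r_H=1$, so $H$ is of type $B(1)$ or of type $C(1)$, i.e. either $H\equiv -K_X+\lambda\varphi(E)$ for a $(-1)$-curve $E$ on $\tilde X$ and some $\lambda>0$, or $H\equiv -K_X+\mu C$ for a $0$-curve $C$ and some $\mu>0$.

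Next I would run through the finitely many configurations. In the type $B(1)$ case the sub-cases are governed by which $D_i$ the $(-1)$-curve $E$ meets: $E$ meets one end of the $A_5$-chain, $E$ meets one of the curves $D_6$, $D_7$ over an $A_1$-point, or $E$ is disjoint from $D$; in each sub-case the range of $\lambda$ is dictated by the requirement that $\varphi^*H=-K_{\tilde X}+\lambda E+(\text{fractional }D_i)$ be a legitimate pullback ($\lambda<6$ at an end of the $A_5$-chain, $\lambda<2$ at an $A_1$-point, $\lambda<1$ otherwise), just as in Lemma~\ref{LemA5A1}, and the analogous mixed cases $H=-K_X+\lambda_1\varphi(E_1)+\lambda\varphi(E)$ must be treated too. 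In the type $C(1)$ case the sub-cases are given by the intersection profile of the proper transform $\tilde C$ with the $D_i$, and one records the corresponding numerical equivalence $\tilde C\sim R$ in terms of the $D_i$ and auxiliary $(-1)$-curves. For each configuration I would draw the curves on $\tilde X$ explicitly, pull back $H$, and rewrite $\varphi^*H$ as an effective $\RR$-divisor $L$ by sliding the fractional multiplicities along the $A_5$-chain and, when necessary, by replacing $E$ (or a boundary $D_i$) via linear equivalences of the shape $\varphi^*(\varphi(E))\equiv -E_j-\tfrac12 D_k+(\text{chain of fibre components})$ used repeatedly in the earlier lemmas. One then produces a $\PP^1$-fibration $g\colon\tilde X\to\PP^1$ (after blowing up one or two of the nodes $D_i\cap D_{i+1}$ to a surface $Y$ in the delicate cases) whose singular fibres are supported on $\Supp(L)$, so that $\tilde X\setminus\Supp(L)\cong\FF_n\setminus(M+F_1+\dots+F_k)$ with $M$ the negative section and the $F_i$ fibres; this open set is a cylinder, giving the required $H$-polar cylinder.

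The main obstacle, as in Lemmas~\ref{LemA4}--\ref{LemA5A1}, will be the handful of boundary sub-cases in which the inequalities guaranteeing $L\ge 0$ degenerate to equalities (typically of the form $\lambda+\lambda_1+2\mu=1$, or $\lambda_2\ge 1+\lambda_1$ in the mixed situations). There the naive divisor $L$ is only pseudo-effective, and one must pass to a different birational model of $\tilde X$ — blowing up the appropriate node, or trading the contribution of the $A_5$-chain for the conjugate pencil through the opposite end — so that a $\PP^1$-fibration with the required singular fibres does appear, and then one checks that the complement is genuinely an $\AAA^1$-cylinder. Verifying that for the $A_5+A_1+A_1$ configuration every such boundary case is covered by one of these models is the crux; the remaining work is routine coefficient bookkeeping, and the numerical identities needed are the same ones already appearing in the proofs of the preceding lemmas.
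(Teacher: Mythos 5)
Your outline follows the same general strategy as the paper (reduce to the Fujita face, write $\varphi^*H$ as an explicit effective divisor $L$, exhibit a $\PP^1$-fibration whose singular fibres carry $\Supp(L)$, conclude the complement is a cylinder), but as written it is a plan rather than a proof: none of the steps that constitute the actual content for $A_5+A_1+A_1$ are carried out. You never determine which curve classes can actually generate the Fujita face for this configuration, you never write down a configuration of $(-1)$-curves on $\tilde X$, an explicit candidate $L$, or a fibration, and you explicitly defer the decisive point ("verifying that every boundary case is covered by one of these models is the crux") to unperformed bookkeeping. That deferred step is exactly where the work lies, so the argument has a genuine gap.

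Moreover, the case analysis you anticipate is both too large and not pinned down. Since the ranks of the singularities sum to $7$, $\rho(X)=2$, so $r_H\le 1$ and the class $K_X+\mu_H H$ lies on one of the two extremal rays; in particular the "mixed" cases $H\equiv -K_X+\lambda_1\varphi(E_1)+\lambda\varphi(E)$ that you propose to treat cannot occur, and (as the paper's proof shows) the only case that does occur, up to relabelling the two $A_1$-points, is $H\equiv -K_X+\lambda\varphi(E)$ with $E$ a $(-1)$-curve on $\tilde X$ meeting only $D_7$ and $0<\lambda<2$; no conic-bundle case and no boundary degeneration arise. The paper then finishes with one uniform computation: an explicit effective $L\equiv\varphi^*(H)$ valid for all $0<\lambda<2$, two blow-ups at the node $D_2\cap D_3$, and a $\PP^1$-fibration with exactly two singular fibres
$\bar D_1+2\bar D_2+\bar N+2\bar D_7+4\bar E_1$ and $\bar D_5+\bar D_3+2\bar D_4+2\bar E_3+2\bar E_2+\bar D_6$,
giving $\tilde X\setminus\Supp(L)\cong\FF_1\setminus(K+F_1+F_2)$. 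Without identifying the admissible extremal classes and producing such an explicit $L$ and fibration (or an equally concrete substitute), your proposal does not yet establish the lemma.
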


\begin{proof}
Let $\varphi\colon\tilde{X}\rightarrow X$ be the minimal resolution of singularities of $X$ and let $D=\sum_{i=1}^7 D_i$ be the exceptional divisor of $\varphi$. We may assume that $D_1,\ldots,D_5$ correspond to the singular point of type $A_5$, $D_6, D_7$ correspond to the singular points of type $A_1$. Note that $H=-K_X+\lambda\varphi(E)$, where $E$ is a $(-1)$-curve on $\tilde{X}$ that meets only $D_7$ and does not meet $D_1,\ldots, D_6$ and  $0<\lambda<2$. We have the following configuration on $\tilde{X}$.\\
\begin{tikzpicture}
\draw  (0.5,0) -- (0.5,3) (0,0.5) -- (8,0.5) (7.5,0)--(7.5,4) (7.2,3.5)--(11,3.5)  (10.5,4)--(10.5,1) (6.5,1)--(6.5,3) (3,2.5)--(5,2.5); \draw [dashed](6,1.5)--(8,1.5) (3.5,0)--(3.5,3)  (9.5,4)--(9.5,1);
\draw (1,0.5) node [above] {$D_2$} (0.5,3) node [above] {$D_1$} (7.5,4) node [above] {$D_3$}  (11,3.5) node [right] {$D_4$}  (6,1.5) node [left] {$E_2$} (3.5,1.5) node [left] {$E_1$} (10.5,1) node [right] {$D_5$} (9.5,1) node [right] {$E_3$} (6.5,3) node [above] {$D_6$} (5,2.5) node [above] {$D_7$} ;
\end{tikzpicture}$$\text{Figure 30}$$ where dotted lines are $(-1)$-curves, $D_1,\ldots,D_7$ are $(-2)$-curves. Moreover, $E\cdot E_1=E\cdot E_3=0$, $E\cdot E_2=1$. We have \begin{gather*}\varphi^*(H)\equiv L=aD_1+2aD_2+(2a-1-\frac{1}{2}\lambda)D_7+2(2a-1-\frac{1}{2}\lambda)E_1+\\
+bD_5+2bD_4+(2b-1)E_3+(b+1)D_3+(b-\lambda)D_6+2(b-\lambda)E_2,\end{gather*} where $a+b=1+\lambda$. Since $\lambda<2$, we see that there exist $a$ and $b$ such that $L$ is an effective divisor. Let $Y_1\rightarrow\tilde{X}$ be the blow up of the intersection point of $D_2$ and $D_3$, and $N$ be the exceptional divisor. Let $Y_2\rightarrow Y_1$ be the blow up of the intersection point of $N$ and the proper transform of $D_3$, and $F$ be the exceptional divisor. Note that there exists $\PP^1$-fibration $g\colon Y_2\rightarrow\PP^1$ such that $g$ has only two singular fibers $C_1, C_2$ and $$C_1=\bar{D}_1+2\bar{D}_2+\bar{N}+2\bar{D}_7+4\bar{E}_1,$$ $$C_2=\bar{D}_5+\bar{D}_3+2\bar{D}_4+2\bar{E}_3+2\bar{E}_2+\bar{D}_6,$$ where $\bar{D}_1,\ldots,\bar{D}_7$ are the proper transforms of $D_1,\ldots, D_7$, $\bar{E}_1, \bar{E}_2, \bar{E}_3$ are the proper transforms of $E_1,E_2,E_3$, $\bar{N}$ is the proper transform of $N$. Then $X$ has an $H$-polar cylinder.
\end{proof}

\begin{lemma}
\label{LemA5A2}
Let $X$ be a del Pezzo surface with du Val singularities and let $H$ be an ample divisor on $X$. Assume that $X$ has the following collection of singularities $A_5+A_2$. Then $X$ has an $H$-polar cylinder.
\end{lemma}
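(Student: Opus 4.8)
The plan is to mimic the arguments of Lemmas \ref{LemA5}--\ref{LemA5A1A1}. Let $\varphi\colon\tilde{X}\to X$ be the minimal resolution and write the exceptional divisor as $D=\sum_{i=1}^{7}D_i$, with $D_1-D_2-D_3-D_4-D_5$ the chain over the $A_5$ point and $D_6-D_7$ the chain over the $A_2$ point. Since $\deg X=1$ and $X$ carries seven $(-2)$-curves, $\rho(X)=2$; as the conic bundle case would require $r_H=9-d=8>\rho(X)-1$, the Fujita contraction $\phi_H$ is birational of rank $1$, so $H\equiv -K_X+\lambda\,\varphi(E)$ for a single $(-1)$-curve $E$ on $\tilde{X}$ and some $\lambda>0$. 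The proof then splits according to the intersection behaviour of $E$ with $D$: using $\varphi(E)^2<0$ one checks that either $E$ is disjoint from $D$ (and $0<\lambda<1$), or $E$ meets the $A_5$-chain at an end, say $E\cdot D_1=1$ (and $0<\lambda<6$), or $E$ meets the $A_2$-chain at an end, say $E\cdot D_6=1$ (and $0<\lambda<3$), the bounds on $\lambda$ coming from ampleness of $H$; within each case the size of $\lambda$ relative to $1$ forces a further subdivision.

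In each case I would draw the configuration of $E$, its proper transform, the $D_i$, and the finitely many auxiliary $(-1)$- and $(0)$-curves on $\tilde{X}$, and then exhibit an effective $\RR$-divisor $L\equiv\varphi^{*}(H)$ whose support contains all the $D_i$. Starting from $\varphi^{*}(H)=-K_{\tilde{X}}+\lambda E+\sum_i c_i(\lambda)D_i$ with the canonical correction coefficients $c_i(\lambda)$ on the chain over the singular point met by $E$, one adjusts the coefficients of $L$ so that it becomes effective: for small $\lambda$ this is achieved by adding a small $\epsilon$-multiple of a $(0)$-curve $M$ through the node $D_2\cap D_3$, exactly as in Lemmas \ref{LemA4A2A1} and \ref{LemA5A1}, while for large $\lambda$ one first replaces $\lambda\varphi(E)$ via a linear equivalence — either $\tilde{C}\sim E+D_1+\dots+D_5+E'$ for a suitable $(0)$-curve $C$, or the relation expressing $E$ together with a sub-chain in terms of the $(-1)$-curves on the opposite side — so that the new divisor is again effective. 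This produces, in each case, a "small $\lambda$" and a "large $\lambda$" subcase.

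Next, in every subcase I would construct a $\PP^1$-fibration $g\colon\tilde{X}\to\PP^1$ (after, if necessary, blowing up the node $D_2\cap D_3$ a few times, as in Lemmas \ref{LemA4A3} and \ref{LemA4A2A1}, to make $g$ relatively minimal and to create a section) whose singular fibers account exactly for $\Supp(L)$ away from a single section. Then $\tilde{X}\setminus\Supp(L)\cong\FF_n\setminus(M+F_1+\dots+F_k)$ with $n\in\{1,2\}$, where $M$ is the negative section and the $F_j$ are fibers; this open set is $(\AAA^1\setminus\{\text{points}\})\times\AAA^1$, an affine cylinder. Since $L$ contains all $\varphi$-exceptional curves, $\varphi$ restricts to an isomorphism $\tilde{X}\setminus\Supp(L)\xrightarrow{\sim}X\setminus\Supp(\varphi_{*}L)$ with $\varphi_{*}L\equiv H$, so $X$ has an $H$-polar cylinder.

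The main obstacle is the combinatorial bookkeeping in the "large $\lambda$" subcases: one must select the right linear equivalence to re-express the contribution of $E$, verify that the resulting system of linear inequalities on the remaining coefficients of $L$ is solvable throughout the relevant range of $\lambda$, and determine the precise sequence of blow-ups realizing the claimed $\PP^1$-fibration on a Hirzebruch surface. Each of these is a finite, explicit computation of the same flavour as in Lemmas \ref{LemA4}--\ref{LemA5A1A1}; the difficulty lies only in keeping all coefficients and fibers mutually consistent.
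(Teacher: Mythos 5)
Your overall strategy is the same one the paper uses for this whole family of lemmas: write $\varphi^*(H)\equiv -K_{\tilde X}+\lambda\tilde E+(\text{correction on the chains})$, exhibit an explicit effective $\RR$-divisor $L\equiv\varphi^*(H)$ whose support contains all seven $(-2)$-curves, and then recognize $\tilde X\setminus\Supp(L)$ as the complement of a section and some fibers in a Hirzebruch surface via a $\PP^1$-fibration obtained after blowing up the node $D_2\cap D_3$. However, what you have written is a plan, not a proof: the entire mathematical content of this lemma is precisely the part you defer as ``finite, explicit computation'' — choosing the configuration of $(-1)$-curves on this particular weak del Pezzo surface, writing down $L$ with explicit coefficients, checking the range of $a,b$ (or $\epsilon$) for which $L$ is effective for all admissible $\lambda$, and specifying the exact sequence of blow-ups together with the two singular fibers of the resulting fibration. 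In the paper this is done concretely: with $E$ a $(-1)$-curve having $E\cdot E_1=1$, $E\cdot E_3=2$, one takes
$L=aD_1+2aD_2+(2a-1-\lambda)D_6+2(2a-1-\lambda)D_7+3(2a-1-\lambda)E_1+bD_5+2bD_4+3bD_3+(3b-1)E_2+(3b-1-2\lambda)E_3$ with $a+b=1+\lambda$, effective because $\lambda<1$, and after three blow-ups over $D_2\cap D_3$ one gets a fibration with exactly two singular fibers, giving the cylinder. None of these verifications appear in your proposal, so the proof is not actually there.

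A second, more structural gap: your case division does not match what actually happens on this surface, and you neither justify it nor carry it out. The paper treats a single case, $H\equiv-K_X+\lambda E$ with $E$ a $(-1)$-curve not passing through the singular points and $0<\lambda<1$, with no small/large $\lambda$ dichotomy; implicitly this rests on knowing which $(-1)$-curves exist on the minimal resolution of the degree-one del Pezzo with $A_5+A_2$ and which of them generate the second extremal ray. You instead posit three cases ($E$ disjoint from $D$, $E\cdot D_1=1$ with $\lambda<6$, $E\cdot D_6=1$ with $\lambda<3$) plus small/large $\lambda$ subcases, but you neither show these are the only possibilities (e.g.\ why $E$ cannot meet an interior curve of a chain, and why the conic bundle case is excluded — your appeal to $r_H=9-d$ is borrowed from a formula the paper states in a different context), nor do you construct a single $L$ or fibration in any of them. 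So both the case analysis and the constructions it is supposed to organize remain unproved; as it stands the argument establishes nothing beyond the (correct) expectation that the method of the neighbouring lemmas should apply.
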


\begin{proof}
Let $\varphi\colon\tilde{X}\rightarrow X$ be the minimal resolution of singularities of $X$ and let $D=\sum_{i=1}^7 D_i$ be the exceptional divisor of $\varphi$. We may assume that $D_1,\ldots,D_5$ correspond to the singular point of type $A_5$, $D_6, D_7$ correspond to the singular point of type $A_2$. Note that $H=-K_X+\lambda E$, where $E$ is a $(-1)$-curve and $0<\lambda<1$. Put $\tilde{E}$ is the proper transform of $E$. We have the following configuration on $\tilde{X}$.\\
\begin{tikzpicture}
\draw  (0.5,0) -- (0.5,3) (0,0.5) -- (8,0.5) (7.5,0)--(7.5,4) (7.2,3.5)--(11,3.5)  (10.5,4)--(10.5,1) (3.5,1)--(3.5,3) (1.5,2.5)--(4,2.5); \draw [dashed](6,1.5)--(8,1.5) (6,2.5)--(8,2.5)  (2,0)--(2,3)  ;
\draw (1,0.5) node [above] {$D_2$} (0.5,3) node [above] {$D_1$} (7.5,4) node [above] {$D_3$}  (11,3.5) node [right] {$D_4$}  (6,1.5) node [left] {$E_2$} (2,1.5) node [left] {$E_1$} (10.5,1) node [right] {$D_5$} (6,2.5) node [above] {$E_3$} (3.5,3) node [above] {$D_6$} (4,2.5) node [right] {$D_7$} ;
\end{tikzpicture}$$\text{Figure 31}$$ where dotted lines are $(-1)$-curves, $D_1,\ldots,D_7$ are $(-2)$-curves. Moreover, $E\cdot E_1=1$ $E\cdot E_2=0$, $E\cdot E_3=2$. We have \begin{gather*}\varphi^*(H)\equiv L=aD_1+2aD_2+(2a-1-\lambda)D_6+2(2a-1-\lambda)D_7+\\
+3(2a-1-\lambda)E_1+bD_5+2bD_4+3bD_3+(3b-1)E_2+(3b-1-2\lambda)E_3,\end{gather*} where $a+b=1+\lambda$. Since $\lambda<1$, we see that there exist $a$ and $b$ such that $L$ is an effective divisor. Let $Y_1\rightarrow\tilde{X}$ be the blow up of the intersection point of $D_2$ and $D_3$, and $N_1$ be the exceptional divisor. Let $Y_2\rightarrow Y_1$ be the blow up of the intersection point of $N_1$ and the proper transform of $D_2$, and $N_2$ be the exceptional divisor. Let $Y_3\rightarrow Y_2$ be the blow up of the intersection point of $N_2$ and the proper transform of $N_1$, and $F$ be the exceptional divisor. Note that there exists $\PP^1$-fibration $g\colon Y_3\rightarrow\PP^1$ such that $g$ has only two singular fibers $C_1, C_2$ and $$C_1=\bar{D}_1+2\bar{D}_2+\bar{N}_2+2\bar{D}_6+4\bar{D}_7+6\bar{E}_1,$$ $$C_2=\bar{D}_5+\bar{N}_1+2\bar{D}_4+3\bar{D}_3+3\bar{E}_2+3\bar{E}_3,$$ where $\bar{D}_1,\ldots,\bar{D}_7$ are the proper transforms of $D_1,\ldots, D_7$, $\bar{E}_1, \bar{E}_2, \bar{E}_3$ are the proper transforms of $E_1,E_2,E_3$, $\bar{N}_1,\bar{N}_2$ are the proper transforms of $N_1, N_2$. Then $X$ has an $H$-polar cylinder.
\end{proof}

\begin{lemma}
\label{LemA6}
Let $X$ be a del Pezzo surface with du Val singularities and let $H$ be an ample divisor on $X$. Assume that $X$ has a unique singular point $P$. Assume that $P$ is of type $A_6$. Then $X$ has an $H$-polar cylinder.
\end{lemma}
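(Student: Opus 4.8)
The plan is to argue exactly as in Lemmas~\ref{LemA4}, \ref{LemA5}, \ref{LemA4A1} and~\ref{LemA5A1}. Let $\varphi\colon\tilde X\to X$ be the minimal resolution of singularities and let $D=\sum_{i=1}^{6}D_i$ be the exceptional $A_6$-chain, so the $D_i$ are $(-2)$-curves with $D_i\cdot D_{i+1}=1$. Since $\deg X=1$ and $P$ is the only singular point, $\tilde X$ is a weak del Pezzo surface of degree $1$, whence $\rho(\tilde X)=9$ and $\rho(X)=9-6=3$. In particular $H$ cannot be of type $C$, since that would force $r_H=9-\deg X=8>\rho(X)-1$; hence $H$ is of type $B(r_H)$ with $r_H\in\{0,1,2\}$. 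If $r_H=0$ then $-K_X\equiv\mu_H H$, and by Theorem~\ref{theorem:cylinders:-K_S} (the singularity $A_6$ being none of $A_1,A_2,A_3,D_4$) the surface $X$ has a $(-K_X)$-polar cylinder, hence an $H$-polar cylinder. So it suffices to treat $r_H\in\{1,2\}$, and, after rescaling $H$, to write $H\equiv -K_X+\sum_j\lambda_j G_j$ where the $G_j$ are the one or two extremal curves spanning the Fujita face $\Delta_H$, possibly together --- as in Lemma~\ref{LemA4A1A1} --- with the $(0)$-curve class $C$ whose proper transform satisfies $\tilde C\sim E+D_1+\cdots+D_6+E'$.

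First I would list the extremal curves that can occur. A $(-1)$-curve $E$ on $\tilde X$ disjoint from $D$ gives a $(-1)$-curve $\varphi(E)$ on $X$, forced by the Fujita decomposition to carry coefficient $<1$; a $(-1)$-curve with $E\cdot D_1=1$ (equivalently $E\cdot D_6=1$) gives $\varphi^*\varphi(E)=E+\tfrac67D_1+\tfrac57D_2+\tfrac47D_3+\tfrac37D_4+\tfrac27D_5+\tfrac17D_6$, with admissible coefficient $\lambda<7$ coming from $(-K_X+\lambda\varphi(E))\cdot\varphi(E)=1-\tfrac{\lambda}{7}>0$; and there are the analogous incidences with interior vertices of the chain, of the same sort as the $\hat E_1\cdot E_3=2$ configuration already handled in Lemma~\ref{LemA5A1}. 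For each resulting shape of $H$ --- when $r_H=1$, one of $-K_X+\lambda E$, $-K_X+\lambda\varphi(E)$, $-K_X+\mu C$; when $r_H=2$, the corresponding pairs, each split into two or three subcases according to how the proper transforms $\tilde E_1,\tilde E_2$ meet the auxiliary $(-1)$-curves, exactly in the manner of Lemma~\ref{LemA4} --- I would write $\varphi^*(H)\equiv L$ as an explicit effective $\RR$-divisor on $\tilde X$ with the usual triangular coefficients along $D_1,\dots,D_6$, choosing the free parameters ($a,b$ with $a+b$ prescribed, plus a small $\epsilon$) inside their admissible window so that all coefficients are nonnegative.

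With $L$ made effective, the final step is to exhibit a $\PP^1$-fibration whose singular fibers are precisely the connected components of $\Supp(L)$ (supplemented, when needed, by a short chain of blow-ups of the node $D_i\cap D_{i+1}$ nearest a terminal $(-1)$-curve, as in Lemmas~\ref{LemA4A1} and~\ref{LemA5}), so that the complement of $\Supp(L)$ becomes $\FF_n\setminus(M+F_1+\cdots+F_k)$ with $M$ the negative section and the $F_i$ fibers. Such a complement is of the form $Z\times\AAA^1$, which yields the required $H$-polar cylinder on $X$.

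I expect the main obstacle to be, as in the $A_4$ and $A_5$ lemmas, the boundary values of the parameters, where the coefficients in the first expression of $\varphi^*(H)$ run against $0$. There one rewrites $\varphi^*(H)$ using the linear equivalence that trades a terminal $(-1)$-curve and an initial segment of the $A_6$-chain for the complementary configuration (and, for the $(0)$-curve, the relation $\tilde C\sim E+D_1+\cdots+D_6+E'$), which moves the mass to the opposite end of the chain and restores effectivity, at the cost of a $\PP^1$-fibration with three or four singular fibers. The only genuinely lengthy part is the incidence bookkeeping for $\tilde E_1,\tilde E_2$ in the $r_H=2$ cases, but it runs entirely parallel to the computations already carried out, so no new idea is needed.
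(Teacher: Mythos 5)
Your plan follows the paper's general method: determine the Fujita decomposition of $H$, pull back to the minimal resolution, choose coefficients so that $\varphi^*(H)\equiv L$ is effective, and exhibit a $\PP^1$-fibration (after a short chain of blow-ups if needed) so that the complement of $\Supp(L)$ is $\FF_n$ minus the negative section and a few fibers. But as written this is a programme, not a proof: everything that constitutes the actual content of Lemma \ref{LemA6} is deferred. You never determine which $(-1)$-curves exist on the degree-one surface with a single $A_6$ point and how they meet the chain $D_1,\dots,D_6$; the paper's proof rests on the specific assertion that $H\equiv-K_X+\lambda_1\hat{E}_1+\lambda\varphi(E)$ with $E\cdot D_1=1$, $\lambda_1<1$, $\lambda<7$, together with the explicit incidences recorded in Figures 32 and 33, and you neither confirm nor replace this classification. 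You also never write down an effective $L$ or a fibration for any of your cases. Most importantly, the delicate regime is exactly where your sketch is vaguest: for $\lambda\geq 1$ the ``triangular'' coefficients go negative, and the paper does not simply ``trade a terminal $(-1)$-curve for the complementary configuration''; it passes to a different decomposition with three free parameters ($a+b+c=4$) in which the pair $E_1+\tilde{E}_1$ forms its own fiber of a fibration with three singular fibers. Saying the bookkeeping ``runs entirely parallel to the computations already carried out'' is not a verification for a new surface with a new incidence pattern, and for this kind of lemma the verification is the proof.

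Two further points. Your exclusion of type $C$ via ``$r_H=9-\deg X=8>\rho(X)-1$'' is not a valid argument: for a conic-bundle contraction of $X$ the Fujita face has dimension $\rho(X)-1=2$, which is perfectly possible a priori (indeed a class of the shape $E+D_1+\cdots+D_6+E'$ containing the whole chain is a candidate fiber, exactly like the $0$-curve $C$ used in Lemmas \ref{LemA4A1} and \ref{LemA4A1A1}); you then re-admit a $0$-curve term later, so your proposed case list is not even internally consistent, and in any event none of its cases is carried out. The only case you actually complete is $r_H=0$, via Theorem \ref{theorem:cylinders:-K_S}, which is fine but peripheral. To turn the proposal into a proof you would need the explicit curve configurations, the explicit effective divisors $L$ on their full parameter ranges, and the explicit fibrations, as the paper provides.
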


\begin{proof}
Let $\varphi\colon\tilde{X}\rightarrow X$ be the minimal resolution of singularities of $X$ and let $D=\sum_{i=1}^6 D_i$ be the exceptional divisor of $\varphi$. Note that $H=-K_X+\lambda_1 \hat{E}_1+\lambda\varphi(E)$, where $\hat{E}_1$ is $(-1)$-curves $E$ is a $(-1)$-curve on $\tilde{X}$ that meets only $D_1$ and does not meet $D_2,\ldots,D_6$. Note that $\lambda_1<1$, $\lambda<7$. Put $\tilde{E}_1$ is the proper transform of $\hat{E}_1$. We have the following configuration on $\tilde{X}$.\\
\begin{tikzpicture}
\draw  (0.5,0) -- (0.5,3) (0,0.5) -- (4,0.5) (3.5,0)--(3.5,4) (3.2,3.5)--(9,3.5)  (8.5,4)--(8.5,1) (8,1.5)--(11,1.5); \draw [dashed] (2,0)--(2,3) (3,1.5)--(5,1.5) (6,4)--(6,2) (8,2)--(10,2);
\draw (1,0.5) node [above] {$D_2$} (0.5,3) node [above] {$D_1$} (3.5,4) node [above] {$D_3$}  (9,3.5) node [right] {$D_4$}  (8.5,1) node [right] {$D_5$} (11,1.5) node [right] {$D_6$} (2,3) node [above] {$E_1$} (5,1.5) node [above] {$E_2$} (6,2) node [right] {$E_3$} (10,2) node [above] {$E_4$};
\end{tikzpicture}$$\text{Figure 32}$$ where dotted lines are $(-1)$-curves, $D_1,\ldots,D_6$ are $(-2)$-curves. Moreover, $E\cdot E_1=E\cdot E_2=E\cdot E_4=0$, $E\cdot E_3=1$, $\tilde{E}_1\cdot E_1=\tilde{E}_1\cdot E_4=0$, $\tilde{E}_1\cdot E_2=\tilde{E}_1\cdot E_3=1$. We have \begin{gather*}\varphi^*(H)\equiv L=aD_1+2aD_2+(2a-1-\frac{5}{7}\lambda)E_1+(a+1+\frac{5}{7}\lambda)D_3+\\
+(a+\frac{1}{7}\lambda-\lambda_1)E_2+bD_6+2bD_5+(2b-1-\frac{2}{7}\lambda)E_4+\\
+(b+1+\frac{2}{7}\lambda)D_3+(b-\frac{8}{7}\lambda-\lambda_1)E_3,\end{gather*} where $a+b=1+\lambda+\lambda_1$. Assume that $\lambda<1$. We see that there exist $a$ and $b$ such that $L$ is an effective divisor. Let $Y\rightarrow\tilde{X}$ be the blow up of the intersection point of $D_3$ and $D_4$, and $F$ be the exceptional divisor. Note that there exists $\PP^1$-fibration $g\colon Y\rightarrow\PP^1$ such that $g$ has only two singular fibers $C_1, C_2$ and $$C_1=\bar{D}_1+2\bar{D}_2+\bar{D}_3+2\bar{E}_1+\bar{E}_2,$$ $$C_2=\bar{D}_4+2\bar{D}_5+\bar{D}_6+2\bar{E}_4+\bar{E},$$ where $\bar{D}_1,\ldots,\bar{D}_6$ are the proper transforms of $D_1,\ldots, D_6$, $\bar{E}_1, \bar{E}_2, \bar{E}_3, \bar{E}_4$ are the proper transforms of $E_1,E_2,E_3, E_4$. Then $X$ has an $H$-polar cylinder. So, we may assume that $\lambda\geq 1$. We have the following configuration on $\tilde{X}$.\\
\begin{tikzpicture}
\draw  (0.5,0) -- (0.5,3) (0,0.5) -- (6,0.5) (5.5,0)--(5.5,4) (5.2,3.5)--(9,3.5)  (8.5,4)--(8.5,1) (8,1.5)--(11,1.5); \draw [dashed] (0,2)--(2,2) (3,0)--(3,3) (8,2)--(10,2) (5,2)--(7,2) (6.5,1)--(6.5,3);
\draw (1,0.5) node [above] {$D_2$} (0.5,3) node [above] {$D_1$} (5.5,4) node [above] {$D_3$}  (9,3.5) node [right] {$D_4$}  (8.5,1) node [right] {$D_5$} (11,1.5) node [right] {$D_6$} (3,3) node [above] {$E_2$} (10,2) node [above] {$E_3$} (6.5,1) node [right] {$\tilde{E}_1$} (7,2) node [right] {$E_1$} (1.8,2) node [above] {$E$} ;
\end{tikzpicture}$$\text{Figure 33}$$ where dotted lines are $(-1)$-curves, $D_1,\ldots,D_6$ are $(-2)$-curves. We have \begin{gather*}\varphi^*(H)\equiv L=(2+\frac{4}{7}\lambda)D_3+(a-2+\lambda)E+(a-1+\frac{6}{7}\lambda)D_1+\\
+(a+\frac{5}{7}\lambda)D_2+(a-1)E_2+(b+\frac{3}{7}\lambda)D_4+(b-1+\frac{1}{7}\lambda)D_6+\\
+(2b-2+\frac{2}{7}\lambda)D_5+(2b-3)E_3+cE_1+(c-1+\lambda_1)\tilde{E}_1,\end{gather*} where $a+b+c=4$. Since $\lambda\geq 1$, we see that there exist $a$ and $b$ such that $L$ is an effective divisor. Note that there exists $\PP^1$-fibration $g\colon \tilde{X}\rightarrow\PP^1$ such that $g$ has only three singular fibers $C_1, C_2, C_3$ and $$C_1=E+D_1+D_2+E_2,$$ $$C_2=D_4+D_6+2D_5+2E_3,$$ $$C_3=E_1+\tilde{E}_1.$$ Then $X$ has an $H$-polar cylinder.
\end{proof}

We see that Theorem \ref{glav} follows from Lemmas \ref{Lem1} --- \ref{LemA6}.

\end{document}